\definecolor{blue}{rgb}{0,0,1}
\newcommand*{\rom}[1]{\expandafter\@slowromancap\romannumeral {\sharp}1@}
\theoremstyle{definition}
\newcommand{\Ord}{\mathrm{Ord}} 
\newcommand{\cof}{\mathrm{cof}} 
\newcommand{\HOD}{\mathrm{HOD}} 
\newcommand{\ZFC}{\mathsf{ZFC}}
\newcommand{\CC}{\mathcal{C}} 
\newcommand{\RR}{\mathcal{R}}
\newtheorem{fact}{fact}[section]
\newtheorem{thm}[fact]{Theorem}
\newtheorem{lemma}[fact]{Lemma}
\newtheorem{definition}[fact]{Definition}
\newtheorem{defini}[fact]{Definition}
\newtheorem{remark}[fact]{Remark}
\newtheorem{conjecture}[fact]{Conjecture}
\newtheorem{question}[fact]{Question}
\newtheorem{claim}[fact]{Claim}
\newtheorem*{claim*}{Claim}
\newtheorem*{subclaim*}{Subclaim}
\newenvironment{enumerate-(a)}{\begin{enumerate}[label={\upshape (\alph*)}, leftmargin=2pc]}{\end{enumerate}}
\newenvironment{enumerate-(a)-r}{\begin{enumerate}[label={\upshape (\alph*)}, leftmargin=2pc,resume]}{\end{enumerate}}
\newenvironment{enumerate-(A)}{\begin{enumerate}[label={\upshape (\Alph*)}, leftmargin=2pc]}{\end{enumerate}}
\newenvironment{enumerate-(A)-r}{\begin{enumerate}[label={\upshape (\Alph*)}, leftmargin=2pc,resume]}{\end{enumerate}}
\newenvironment{enumerate-(i)}{\begin{enumerate}[label={\upshape (\roman*)}, leftmargin=2pc]}{\end{enumerate}}
\newenvironment{enumerate-(i)-r}{\begin{enumerate}[label={\upshape (\roman*)}, leftmargin=2pc,resume]}{\end{enumerate}}
\newenvironment{enumerate-(I)}{\begin{enumerate}[label={\upshape (\Roman*)}, leftmargin=2pc]}{\end{enumerate}}
\newenvironment{enumerate-(I)-r}{\begin{enumerate}[label={\upshape (\Roman*)}, leftmargin=2pc,resume]}{\end{enumerate}}
\newenvironment{enumerate-(1)}{\begin{enumerate}[label={\upshape (\arabic*)}, leftmargin=2pc]}{\end{enumerate}}
\newenvironment{enumerate-(1)-r}{\begin{enumerate}[label={\upshape (\arabic*)}, leftmargin=2pc,resume]}{\end{enumerate}}
\title[Recognizable sets and Woodin cardinals]{Recognizable sets and Woodin cardinals: Computation beyond the constructible universe} 
\author{Merlin Carl, Philipp Schlicht and Philip Welch}
\begin{document}

\maketitle  

\begin{abstract} 
%We consider Turing machines with ordinal time and tape. 
% with finitely many ordinal parameters. 
%Computations with these machines are linked to the constructible universe. 
%Motivated by Hamkins' notion of a lost melody %\emph{lost melody} 
%for infinite time Turing machines, %i.e. a set of ordinals which can be recognized but cannot be computed, 
We call a subset of an ordinal $\lambda$ \emph{recognizable} %\emph{recognizable} 
if it is the unique subset $x$ of $\lambda$ for which some Turing machine with ordinal time and tape, 
%with or without ordinal parameters 
which halts for all subsets of $\lambda$ as input, 
halts with the final state $0$. 
Equivalently, such a set is the unique subset $x$ which satisfies a given $\Sigma_1$ formula in $L[x]$. 
We prove several results about sets of ordinals recognizable from %and without 
ordinal parameters by ordinal time Turing machines.
%PDW removed this: what variants?  and variants thereof. 
%Equivalently, such a set is the unique subset $x$ of an ordinal which satisfies a $\Sigma_1$ formula with an ordinal parameter in $L[x]$. 
%moreover sets recognizable without parameters, with fixed parameters, and by weaker machines. 
%These sets generalize sets which are $\Sigma_1$, or equivalently $\Sigma^1_2$ in a countable ordinal, to arbitrary ordinals. 
%for instance we separate recognizability for different cardinals as parameters in $L$ and add recognizable sets by forcing over $L$. 
 % and weaker machines. 
%We observe that $0^{\#}$ is  recognizable from any uncountable ordinal but not from any countable ordinal and 
%We separate recognizability from different cardinals in $L$.  
%While ordinal time computations are linked to the constructible universe, 
%and thus their set theoretic scope is limited. 
%consider recognizability from ordinal parameters for ordinal time computations and show that recognizable objects appear at the level of Woodin cardinals. 
% as the union of the subsets of $\alpha$ which are recognizable from ordinal parameters, the subsets of $\alpha$ which are recognizable from an ordinal parameter with a recognizable subset of $\alpha$ as an oracle, etc. 
%Amongst other results, we show that assuming large cardinals, there is a %large 
%gap between the computable %sets computable from ordinals and the sets recognizable from ordinals, since 
%and the recognizable sets: 
Notably we show the following results from large cardinals. 
\begin{itemize} 
%$\bullet$ t
\item 
Computable sets are elements of $L$, while 
recognizable objects %with infinite time computations 
appear up to the level of Woodin cardinals. 
%$\bullet$ 
\item 
A subset of a countable ordinal $\lambda$ %$\alpha$ 
is in the \emph{recognizable closure} for subsets of $\lambda$ 
%from some ordinal parameter 
if and only if it is an element of $M^{\infty}$, 
\end{itemize} 
where $M^{\infty}$ denotes the inner model obtained by iterating the least measure of $M_1$ through the ordinals, and where the recognizable closure %\emph{recognizable closure} 
for subsets of $\lambda$ %$\alpha$
is defined by closing under relative recognizability for subsets of $\lambda$.
% of $M_1$ with the least measure through the ordinals. 
%In this setting, the reals computable from recognizable reals are exactly 
%More precisely, for ordinal time computations with ordinal parameters, the reals which are computable from recognizable reals are exactly 
%the reals in a canonical inner model $M_1$ of set theory with a Woodin cardinal, when this model exists. 
%Moverover, we prove results about recognizability without ordinal parameters and with fixed ordinal parameters. 
% and consider the question whether recognizability from many oracles implies recognizability. 
%Ordinal computability is usually strongly linked to $L$. This limits its set-theoretical interest and moves it in the direction of restrictive foundational views.
%In contrast, we consider recognizability with parameter-$OTM$s and show that recognizable objects can appear at surprisingly high levels of the large cardinal hierarchy.
%We further prove results about the complexity of recognzable sets in various models of set theory. 
\end{abstract}

\setcounter{tocdepth}{2} 
\tableofcontents

%%%%%%%%%%%%%%%%%%%%%%%%%%%%%%%%%%%%%%%%%%%%%%%%%%%
%%%%%%%%%%%%%%%%%%%%%%%%%%%%%%%%%%%%%%%%%%%%%%%%%%%
\section{Introduction}

Infinitary machine models of computation provide an attractive approach to generalized recursion theory. 
The first such model, Infinite Time Turing Machines, was introduced %in 2000 
by Hamkins and Lewis \cite{HaLe}.\footnote{ Work on this paper was partially done whilst the authors were visiting at the Isaac Newton Institute for Mathematical Sciences in the programme `Mathematical, Foundational and Computational Aspects of the Higher Infinite' (HIF), to which they are grateful. In addition the last author was a Simons Foundation Fellow during this period and gratefully acknowledges that Foundation's support.} 
A motivation for considering such machine models is that they capture the notion of an effective procedure in a more general sense than classical Turing machines, thus allowing effective mathematics of the uncountable \cite{EU}. 

Such models are usually obtained by extending the working time or the working space of a classical model of computation to the transfinite. The strongest such models considered so far, to our knowledge, 
are Ordinal Turing Machines (OTMs) and the equivalent Ordinal Register Machines (ORMs). These were defined and studied by Peter Koepke and others \cite{Ko,KoSy}. 
It is argued in \cite{Ca4} that OTM-computability adequately expresses the intuitive notion of an idealized computor working in transfinite time and space.

%The realm of objects that are computable by an OTM from finitely many ordinal parameters can be concisely characterized: It is shown in \cite{KoSy} that exactly the constructible sets of ordinals are OTM-computable with 
%finitely many parameters. As constructible sets can be coded in sets of ordinals in rather canonical ways, one may thus say that OTMs `compute the constructible universe', i.e. $L$.
The sets of ordinals which are OTM-computable from ordinal parameters are simply the constructible sets of ordinals. 
This is rather restrictive and 
%From a set-theoretical viewpoint, this is somewhat unfortunate: It limits the corresponding notion of `effectivity' to objects that exist or can exist in $L$, which is widely considered as an unreasonably
%restrictive point of view. 
it was asked whether one could think of machines that have an extra function allowing them to go outside of $L$ into the core model $K$ (\cite{FW}). 
%models of transfinite computation can be connected to `higher' set theoretical objects that exist e.g. in higher core models. E.g. in \cite{FM}, it is mentioned that
%`(...) one could think very speculatively of machines that go beyond the first model of second order number theory, or even have some form of in-built extra function allowing them to go
%outside of $L$ into say $K$ the core model?'
This idea suggests a strengthening of the underlying machine model. 

Here we follow a different approach and consider the notion of recognizability. 
This means that for some initial input, some program will stop with output $1$ if the input is the object in question, and stop with output $0$ otherwise. 
%\cr 
It is thus a form of implicit definability. 
%\color{red}
To our knowledge this was independently first considered for OTM-computability by Dawson \cite{Da09}. He showed that the OTM-computable sets 
%(without using ordinal parameters) 
coincided with the recognisable sets, without allowing ordinal parameters. He showed that if this was relaxed to allow constructibly countable ordinal parameters, then still no non-constructible sets are recognisable. He further showed that $0^{{\sharp}}$, if it existed, was recognisable from some uncountable cardinal, and that adding Cohen reals over $L$ did not add recognisable sets. 
%However, there is a different approach: Besides computability, which is the ability of some program to output the object 
%in question, there is the notion of recognizability, which means that some program will, with some initial input on the tape, eventually stop with output $1$ if and only if the input is the object in question
%and otherwise stop with output $0$. 
The fact that recognizability is strictly weaker than computability was called the \emph{lost melody phenomenon} \cite{HaLe} (see also \cite{Ca2}). 
% and named the `lost melody phenomenon' (see also \cite{Ca2}). 
%As computability for OTMs with parameters coincides with constructibility, one may thus consider whether recognizability for these machines may go beyond $L$.
For instance an OTM can recognize $0^{\sharp}$ from the parameter $\omega_{1}$  \cite{Ca2}, although $0^{\#}$ is not computable, 
and thus the question arises how far recognizability goes beyond $L$. 
%Indeed, it was shown in \cite{Ca2} that an OTM with parameter $\omega_{1}$ can recognize $0^{\sharp}$.

Similar to computability, recognizability can be relativized. 
%in a natural way: 
Intuitively, an object $x$ is recognizable relative to an object $y$ if this can be used to identify $x$. 
Imagine that we cannot recognize a radioactive stone, but we can 
%This is a quite familiar act: I may not be able
%to recognize a radioactive stone, but I can 
identify a Geiger counter and use this to identify the stone. 
We can iterate relative recognizability and obtain the \emph{recognizable closure} $\CC$. 
%Objects that were identified
%with the help of other objects can again be used for further such identifications, leading to the notion of the `recognizable closure'. 
%Similarly to constructibility, being an element of the recognizable closure is a notion of idealized epistemic accessibility. 
Various foundational views consider mathematical objects as objects of an idealized cognitive agent. 
Such a view on set theory is entertained by Hao Wang (\cite{Wa}) and Philip Kitcher (\cite{Ki}) and is also present in various remarks by G\"odel. 
%From such a viewpoint, $L$ can be motivated as the realm of objects that an idealized agent can construct. 
%In a similar way, t
The recognizable closure can be interpreted as the range of objects that are recognized by an idealized agent. % in the sense of having a method to identify them. 
%The notion of the recognizable closure may hence be potentially relevant for the philosophy of mathematics
From this viewpoint, the notion of the recognizable closure may be relevant for the philosophy of mathematics. 

In this paper, we prove various results about recognizable sets of ordinals and about the recognizable closure. 
%\color{red} 
Recognizability generalizes the notion for reals of $\Delta^1_2$ in countable ordinals by allowing arbitrary ordinals. %\cr 
Indeed, if $\omega_{1}=\omega_{1}^{L}$ then the relativized relation between reals $x,y$ of $x$ being recognizable from $y$ in a countable ordinal coincides with $\Delta^{1}_{2}$ reducibility (see Corollary \ref{characterization of recognizable from alpha}), and the consequent degree structure is that of the $\Delta^1_{2}$-degrees.

%will study the recognizable closure for OTMs with parameters. 
%A real $x$ is recognizable from $y$ in a countable ordinal $\alpha$ is equivalent to the statement that $x$ is $\Delta^1_2$ in $y$ in $\alpha$. 
%the recognizable closure of a real $x$ is the set of all reals which are $\Delta^1_2$ in $x$. 
%Therefore the recognizable closure is a generalization of $\Delta^1_2$-definability in countable ordinals. 
%The recognizable closure can reach far beyond $L$. 
While recognizability from fixed ordinal parameters is not absolute to generic extensions if $0^{\#}$ exists, the recognizable closure is more stable. 
We prove that if there is a measurable cardinal above a Woodin cardinal, then the recognizable closure for subsets of $\omega$ is exactly the set of reals in $M_1$, a fine structural model with a Woodin cardinal. 
This set is also known as $Q_3$ \cite{KeMaSo}. 
There are various other characterizations of $Q_3$, for instance as the maximal countable $\Pi^1_3$ set which is closed under $\Delta^1_3$-reducibility \cite[p. 202, section II]{KeMaSo} assuming projective determinacy. 
We prove this result for subsets of arbitrary countable ordinals. 
It is open whether our results can be extended to the recognizable closure for subsets of arbitrary cardinals. 
%As it turns out, the recognizable closure for sets of natural numbers can reach out far beyond $L$ and, under appropriate large cardinal assumptions, 
%coincides with $M_{1}$, a fine structural iterable inner model for a Woodin cardinal. 

We further show that the generic version of the recognizable closure contains exactly the sets of ordinals in $M^{\infty}$, where $M^{\infty}$ is obtained by iterating the least measure in $M_1$ through the ordinals. 

The paper is structured as follows. Section 2 introduces Ordinal Turing Machines and basic results about recognizability. 
% and proves some basic results. 
Section 3 contains results on recognizability with specific ordinal parameters and 
on the recognizable closure in generic extension. 
% without the existence of $M_{1}^{\sharp}$. 
Section 4 then connects recognizability with inner models. 
The main result states that the recognizable closure for subsets of a countable ordinal $\alpha$ is the set of real numbers in $M^{\infty}$, if there is a measurable cardinal above a Woodin cardinal.

We would like to thank Gunter Fuchs, Daisuke Ikegami, Vladimir Kanovei, Ralf Schindler and John Steel for discussions related to the topic of this paper.

\section{Definitions and basic facts} 

Ordinal Turing Machines (OTMs) were introduced independently by Koepke and  Dawson, and appeared in \cite{Ko}, and the latter's unpublished thesis \cite{Da09}, as a further generalization of classical Turing machines to the transfinite, following the infinite time Turing machines (ITTMs) of Hamkins and Kidder. 
They provide an upper bound on the strength of a reasonable model of transfinite computation 
(see e.g. \cite{Ca4} for an argument in favor of this claim). 

We refer to \cite{Ko} for the definitions, but give a brief description of the model and its computations. 
An OTM has a tape whose cells are indexed with the ordinals and runs along an ordinal time axis. At each time, each cell contains either a $0$ or a $1$. An OTM-program is just an ordinary
Turing machine program. A computation state thus consists of the tape content, which is a function $t\colon \Ord\rightarrow 2$, the head position, an ordinal, and a program state. 
There are finitely many states and these are indexed by natural numbers. 

At a successor time $\alpha+1$, the computation state is determined from the state at time $\alpha$ in the same way as for an ordinary Turing machine with the supplement that, if the head is currently at a limit position
and is now supposed to move to the left, it is set to position $0$. At limit times, the content of each cell, the head position and the program state are obtained as the inferior limits of the sequence of earlier
cell contents, head positions and program states. A computation stops when it assumes a state for which no further state can be determined from the program. The computation can be given an ordinal parameter $\alpha$
by marking the cell at $\alpha$ with a $1$ before the computation starts.

A subset $X$ of an ordinal $\gamma$ is OTM-computable in the parameter $\alpha$ if there is an OTM-program $P$ which takes as input the value $1$ at $\alpha$ and $\beta<\gamma$ and the value $0$ otherwise, 
stops with $1$ on the first cell if $\beta\in X$, and stops with $0$ on the first cell if $\beta\notin X$. %l, in which case we also say that $P$ `stops with output $1$'. 
A set $X$ of ordinals is OTM-computable if it is OTM-computable from some ordinal. 
%If there is some $\alpha$ such that $X$ is OTM-computable in $\alpha$, then $X$ is OTM-computable. 
%If such a $P$ exists such that $\alpha$ can be $0$, we call $X$ parameter-free OTM-computable.
If the computation can be done without an ordinal parameter $\alpha$, then we call $X$ OTM-computable without ordinal parameters. 

For notation, suppose that $P$ is a program and $X$ is a set of ordinals. Then $P^{X}\downarrow=y$ means that $P$ stops with output $y$ with the oracle $X$. 
Moreover $P^{X}\uparrow$ means that the computation diverges. 
The \emph{Kronecker symbol} $\delta_{xy}$ is defined as $\delta_{xy}=1$ if $x=y$ and $\delta_{xy}=0$ otherwise. 
We denote a computation of $P$ with the oracle $x$ and the ordinal parameter $\alpha$ by $P^{x}(\alpha)$. 
Let $x\oplus y:=\{2n:n\in x\}\cup\{2n+1:n\in y\}$ denote the join of $x,y\subseteq\omega$. 

The main result of \cite{Ko}, %\cr 
independently obtained by Dawson \cite{Da09}, states that the OTM-computable sets of ordinals coincide with the constructible sets of ordinals. This result and its proof can be relativized in a straightforward manner. 

\begin{thm} \label{relOTM} \label{enumerateL} 
\begin{enumerate-(1)} 
\item 
Let $x,y\subseteq\text{On}$. Then $x$ is computable by some OTM-program with some ordinal parameter $\alpha$ in the oracle $y$ if and only if $x\in L[y]$.
\item 
There is a non-halting OTM-program $P$ such that for all $x\subseteq\omega$, $P^{x}$ enumerates $L[x]$ in the sense that for every set of ordinals $y\in L[x]$, the characteristic function of $y$ is written the tape at some time in the computation. 
%i.e. such that the computation of $P^{x}$ has for each $y\subseteq\text{On}$ contained in $L[x]$ at some point of time the characteristic written on the tape.
\end{enumerate-(1)}
\end{thm}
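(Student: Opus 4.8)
The plan is to relativize Koepke's proof of the unrelativized statement \cite{Ko}; since the oracle $y$ enters the construction only as one extra atomic predicate used to seed the hierarchy, the relativization is essentially cosmetic, and I would organize the argument so that part (2) carries the combinatorial weight while part (1) follows from it together with an absoluteness argument.

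I would prove part (2) first. The idea is to have $P^x$ build the relative constructible hierarchy $\langle L_\alpha[x] : \alpha\in\text{On}\rangle$ step by step, maintaining on the tape a code for the portion produced so far. Using that an OTM can compute the G\"odel pairing function and its inverse, I would treat the ordinal tape as an $\text{On}\times\text{On}$ array, storing in successive rows the characteristic functions (relative to a fixed definable wellordering) of the sets constructed at each level. At a successor step the machine forms $L_{\alpha+1}[x]$ from $L_\alpha[x]$ by enumerating the first-order definable-with-parameters subsets of $(L_\alpha[x],\in, x\cap L_\alpha[x])$; this requires an OTM subroutine evaluating the satisfaction relation for a coded structure, obtained by recursion on formula complexity with the quantifiers simulated by bounded search through the rows already written. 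Each time a new set is produced, its characteristic function is copied to a designated output region, so that every $y\in L[x]$ is eventually displayed, as required.

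The main obstacle is the limit behaviour. At a limit time the content of each cell is the inferior limit of its earlier values, so I must arrange the coding so that this liminf rule reproduces exactly the code of $\bigcup_{\alpha<\lambda} L_\alpha[x]$ rather than a corrupted value; this is the delicate point in Koepke's argument, and amounts to choosing the tape layout and the bookkeeping flags so that the intended cells stabilize on their correct bits while transient scratch-work washes out in the liminf. Granting part (2) (and its routine generalization from $x\subseteq\omega$ to an arbitrary oracle $y\subseteq\text{On}$), part (1) follows. For the backward direction, if $x\in L[y]$ then $x$ appears on the output region of $P^y$ at some stage; feeding as the parameter $\alpha$ an ordinal that names $x$ in the construction (e.g.\ coding a level together with the index of $x$ at that level) lets a halting program simulate $P^y$ until $x$ is produced and then answer each query ``$\beta\in x$?'' from the stored characteristic function. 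For the forward direction I would argue by absoluteness: the sequence of computation states of $P^y(\alpha)$ is defined by a transfinite recursion whose successor and liminf steps are $\Delta_1$ in $y,\alpha,P$, so the whole computation is $\Delta_1$-definable over $L[y]$ and hence an element of $L[y]$; therefore so is its output $x$.
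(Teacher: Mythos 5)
Your proposal is correct and follows essentially the same route the paper relies on: the paper does not reprove this theorem but simply cites the relativization of Koepke's argument (via \cite{CS}), and your sketch --- building $L[x]$ level by level with G\"odel pairing and a satisfaction subroutine, handling the liminf rule at limits, then deriving part (1) by reading off a named stage for one direction and by $\Delta_1$-absoluteness of the computation over $L[y]$ for the other --- is exactly that standard argument. The only point to flag is that the limit-stage bookkeeping is asserted rather than carried out, but since the paper itself defers this to the literature, that is an acceptable level of detail here.
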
 
\begin{proof}
See \cite[Lemma 9]{CS}.
\end{proof}

%Another notion of how a program can determine a set of natural numbers was considered in \cite{HL}. 
A set $x\subseteq\omega$ is called ITTM-recognizable if and only if there is a program $P$ such that $P$ stops with output $\delta_{xy}$ with the oracle $y\subseteq\omega$ \cite{HaLe}. 
This generalizes to arbitrary sets of ordinals for OTMs. 
%In \cite{HL}, which introduced Infinite Time Turing Machines, another notion of
We will also use a relativized notion of recognizability. 
Relativized recognizability was introduced and studied for ITRMs (\cite{Ca}) and several more machine types \cite{Ca2}. %, {\cr and for OTM's in \cite{Da09}}. 

\begin{definition}{\label{relrec}} 
Suppose that $x$ is a subset of an ordinal $\alpha$ and $y$ is a set of ordinals. 
\begin{enumerate-(i)} 
\item 
$x$ is \emph{recognizable} from $y$ from finitely many ordinal parameters $\gamma_0,\dots,\gamma_n$ if there is an OTM-program $P$ %with finitely many ordinal parameters $\vec{\rho}$ 
with the parameters $y$ and $\gamma_0,\dots,\gamma_n$ 
which halts for every subset $z$ of $\alpha$ with $\delta_{xz}$ in the first cell. 
\item 
$x$ is \emph{recognizable} from $y$ without parameters if we can choose $\langle \gamma_0,\dots,\gamma_n\rangle$ as the empty sequence. 
\item 
$x$ of an ordinal $\alpha$ is \emph{recognizable} from $y$ if it is recognizable from $y$ with some ordinal parameters. 
\end{enumerate-(i)} 
%(with finitely many ordinal parameters) if there is an OTM-program $P$ with finitely many ordinal parameters $\vec{\rho}$ which halts for every subset $y$ of $\alpha$ with $\delta_{xy}$ in the first cell. 
%If $\vec{\rho}$ can be chosen to be empty, we say that $x$ is parameter-free recognizable. We denote the set of recognizable reals by RECOG$_{\text{pOTM}}$ and the set
%of parameter-free recognizable reals by RECOG$_{\text{OTM}}$.
%
%If $x,y\subseteq\alpha$, then $x$ is \emph{recognizable from} (or relative to) $y$, written $x\leq^{r}y$ if and only if there are a program $P$ and a finite set $\vec{\rho}$ of ordinals such that for all $z\subseteq\alpha$,
%$P^{z\oplus y}(\vec{\rho})\downarrow=\delta_{xz}$.
\end{definition} 

%In this section, we consider reals which are recognizable with and without ordinal parameters. Section 3 proves some results on recognizability with ordinal turing machines with parameters and the recognizable closure.

%\begin{remark}
Via iterated Cantor pairing, we cam assume %without loss of generality 
that the parameter is a single ordinal. Therefore we only consider single parameters from now on. 
%\end{remark} 

We have the following simple characterisation of recognizability. 

\begin{lemma}\label{2.3}
A subset $x$ of an ordinal $\alpha$ is recognizable from a set of ordinals $y$ if 
%$x\subseteq \alpha$ is recognisable, if 
and only if there is a $\Sigma_{1}$ formula $\varphi(u,v,w)$ and an ordinal $\beta$ so that $x$ is the unique subset $z$ of $\alpha$ so that $L[y,z]\models \varphi(y,z,\beta)$.
\end{lemma}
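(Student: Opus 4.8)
The plan is to prove both implications by translating between halting OTM-computations and $\Sigma_1$-definability over $L[y,z]$, using Theorem \ref{relOTM} as the bridge. Throughout I use that, via the join, an oracle giving access to both $y$ and a candidate $z\subseteq\alpha$ is the same as an oracle for $y\oplus z$, and that $L[y\oplus z]=L[y,z]$; by the remark after Definition \ref{relrec} I may also assume a single ordinal parameter.

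For the direction from recognizability to $\Sigma_1$-definability, suppose $P$ recognizes $x$ from $y$ with ordinal parameter $\gamma$, so that for every $z\subseteq\alpha$ the computation $P^{y\oplus z}(\gamma)$ halts with output $\delta_{xz}$. I would take $\varphi(u,v,w)$ to be the formula asserting ``there is a halting run of $P$ with oracle $u\oplus v$ and parameter $w$ whose final content has $1$ in the first cell'', and set $\beta=\gamma$. A halting OTM-run is coded by a set, namely a transfinite sequence of configurations obeying the successor and liminf rules, and its existence is a $\Sigma_1$ assertion whose matrix is an absolute check; moreover any such run for the oracle $y\oplus z$ lies in $L[y,z]$. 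Hence $L[y,z]\models\varphi(y,z,\gamma)$ holds exactly when $P^{y\oplus z}(\gamma)\downarrow=1$, i.e.\ exactly when $z=x$. Since $P$ halts on every $z\subseteq\alpha$ and outputs $\delta_{xz}$, the set $x$ is indeed the unique subset of $\alpha$ satisfying $\varphi$, as required.

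For the converse, suppose $\varphi$ is $\Sigma_1$ and $x$ is the unique $z\subseteq\alpha$ with $L[y,z]\models\varphi(y,z,\beta)$. By uniqueness, for $z\subseteq\alpha$ we have $z=x$ if and only if $L[y,z]\models\varphi(y,z,\beta)$, so it suffices to build a single program that, with oracle $y\oplus z$ and parameter $\beta$, halts for every $z\subseteq\alpha$ and decides this one $\Sigma_1$ statement. Using the relativized enumeration program of Theorem \ref{enumerateL}(2), the machine enumerates $L[y,z]$ and, since $\varphi$ is $\Sigma_1$, searches for a witness; if one ever appears it halts with output $1$. The positive case is thus handled uniformly in $z$.

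The remaining, and genuinely hard, point is to make the program halt with output $0$ when $\varphi(y,z,\beta)$ fails, since the witness search simply runs through all ordinals and never terminates on its own. I would resolve this as in the standard OTM analysis of $\Sigma_1$-truth: bound the witness search by an ordinal $\delta=\delta(y,z)$ with $L_\delta[y,z]\prec_{\Sigma_1}L[y,z]$, for instance the least such $\Sigma_1$-stable ordinal, so that a witness exists in $L[y,z]$ iff it already appears in $L_\delta[y,z]$, and arrange for the machine to detect, uniformly in $z$, that stage $\delta$ has been reached, for instance by monitoring the monotone collection of $\Sigma_1$ facts in the relevant parameters realized so far and halting with output $0$ once it stabilizes without having produced a witness. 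Verifying that this stabilization point is OTM-detectable uniformly in the oracle, and that it genuinely bounds the witness search, is the main obstacle; everything else reduces to bookkeeping with the transition rules and the absoluteness of the constructible hierarchy.
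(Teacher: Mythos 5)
Your forward direction is correct and is essentially the paper's argument: the assertion that $P^{y\oplus z}(\gamma)$ halts with output $1$ is a $\Sigma_1$ statement absolute to $L[y,z]$, and uniqueness follows from the fact that $P$ decides every $z\subseteq\alpha$.

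The converse as you have sketched it has a genuine gap, and you have correctly located it yourself: the claim that the machine can \emph{detect}, uniformly in the oracle, that the monotone collection of realized $\Sigma_1$ facts has stabilized. This is not merely an unverified technicality; it cannot work as stated. Stabilization of the $\Sigma_1$ theory of $L_\delta[y,z]$ is a statement about all later stages, and an always-halting procedure that recognized the stabilization point would decide $\Sigma_1$ truth over $L[y,z]$ --- equivalently the halting problem for OTMs in the oracle $y\oplus z$ with the given parameters --- by a machine with those same resources, which is impossible by the usual diagonalization. The enumeration-and-search strategy gives only a semi-decision procedure (your semi-recognizability), and no amount of monitoring from below converts it into a total one.

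The paper's fix is short and exploits the fact that recognizability permits arbitrary ordinal parameters: let $\tau=\max\{\sup y,\alpha,\beta\}$ and hand the machine the $V$-cardinal $\gamma=|\tau|^{+}$ as an additional parameter. The machine then constructs $L_\gamma[y,z]$ --- a computation that halts because its length is bounded in advance by the given ordinal $\gamma$ --- and evaluates $\varphi(y,z,\beta)$ there. Correctness is immediate from L\'evy absoluteness, $L_\gamma[y,z]\prec_{\Sigma_1}L[y,z]$ for parameters of hereditary size at most $\tau$, so no witness is ever missed and no stabilization detection is needed. If you replace your dynamic bound $\delta(y,z)$ by this fixed cardinal parameter, the rest of your argument goes through.
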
 
\begin{proof} 
Suppose that $x$ is recognisable by a program $P_{e}$  in the parameters $y$ and $\beta$. Then the statement that this program halts on input $x$ with state $1$, is a $\Sigma_{1}$ statement true only of $x$ for this pair $y,\beta$ and is absolute. 

Conversely, suppose that $\varphi(y,z,\beta)$ is a $\Sigma_1$-formula for which $x$ is the unique solution for $z$ in models of the form $L[y,z]$. Suppose $\max\{
 \sup y,\alpha,\beta \} = \tau$. Let $P_{e}$ be the program with  parameters $y$,  the $V$-cardinal $\gamma = |\tau|^{+}$, and the $\beta$ in $\varphi$, that on input $z\subseteq\alpha$ checks if $L_{\gamma}[y,z]\models \varphi[y,z,\beta] $.
 % for $\gamma\leq|\tau|^{+}$.  
 If so then $z=x$ and it may halt with the state $1$. Otherwise  $z\neq x$ and it may halt with the state $0$. 
\end{proof}

%The relativisation to ``$x$ is recognisable in $y$'' is straightforward. The notion of recognizability for OTMs with ordinal parameters is quite stable. 

%\begin{lemma} 
%Suppose that $x$ is the unique subset of $\alpha$ for which some parameter-OTM-program $P$ halts with input $x$. 
%Then $x$ is recognizable. 
%\end{lemma} 
%\begin{proof} 
%Suppose that $P$ halts in step $\gamma$ for the input $x$. With $\gamma$ as a parameter, we can modify the machine so that it always halts after step $\gamma+1$, the final state is $s$ if it halts in step $\gamma$, 
%and the final state is $t$ if it halts in step $\gamma+1$. 
%\end{proof} 

We can define the following variations of recognizable sets and it is easy to show that each of these conditions is equivalent to recognizability. 
A subset $x$ of an ordinal $\alpha$ is \emph{semi-recognizable} if there is an OTM-program $P$ and an ordinal $\beta$ such that for all $y\subseteq\alpha$, $P^{y}(\beta)$ halts if and only if $x=y$. 
A subset $x$ of an ordinal $\alpha$ is \emph{anti-recognizable} if there is an OTM-program $P$ and an ordinal $\beta$ such that for all $y\subseteq\alpha$, $P^{x}(\beta)$ diverges if and only if $x=y$.

Moreover, recognizability is stable under computable equivalence for OTMs with ordinal parameters. 
%(which, for parameter-OTMs, means constructible equivalence):
This is equivalent to constructible equivalence, where $x,y$ are constructibly equivalent if $x\in L[y]$ and $y\in L[x]$. 

\begin{lemma}{\label{conequiv}} 
%Let $x$ and $y$ be constructibly equivalent, i.e. $x\in L[y]$ and let $y\in L[x]$ (equivalently, $x$ is pOTM-computable from $y$ and vice versa). Suppose that $x$ is 
If sets of ordinals $x,y$ are constructibly equivalent, then  $x$ is recognizable implies $y$ is recognizable. 
\end{lemma}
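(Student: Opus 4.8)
The plan is to argue through the $\Sigma_1$-characterisation of Lemma \ref{2.3} rather than by manipulating machines directly; this spares us from having to coerce possibly divergent computations into halting, which is the usual nuisance in recognizability arguments. Throughout, ``recognizable'' means recognizable from the empty oracle (with ordinal parameters), so Lemma \ref{2.3} applies with a trivial oracle, and the argument relativizes verbatim if a fixed oracle is present. Since $x$ and $y$ are constructibly equivalent we have $L[x]=L[y]$. By Theorem \ref{relOTM}(1), fix an OTM-program $Q$ and an ordinal $\delta$ such that $Q^{x}(\delta)\downarrow=y$, i.e.\ $Q$ computes the characteristic function of $y$ from the oracle $x$ (this uses $y\in L[x]$). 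Since $x$ is recognizable, Lemma \ref{2.3} gives a $\Sigma_1$ formula $\varphi$ and an ordinal $\beta$ for which $x$ is the unique $z\subseteq\alpha$ with $L[z]\models\varphi(z,\beta)$. The goal is to exhibit a $\Sigma_1$ formula $\psi$ and an ordinal parameter for which $y$ is the unique solution, and then to quote Lemma \ref{2.3} in the reverse direction.

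Concretely, I would let $\psi(w)$ assert, read inside $L[w]$, that there is $z\subseteq\alpha$ with $L[z]\models\varphi(z,\beta)$ and $Q^{z}(\delta)\downarrow=w$. Each clause is $\Sigma_1$ over $L[w]$: the subformula ``$L[z]\models\varphi(z,\beta)$'' is $\Sigma_1$ in the parameters $z,\beta$ because $\varphi$ is $\Sigma_1$ and $L[z]$ is uniformly $\Sigma_1$-definable from $z$; the halting assertion ``$Q^{z}(\delta)\downarrow=w$'' is $\Sigma_1$ since a halting OTM-computation with prescribed output is witnessed by a set; and prefixing the existential quantifier $\exists z$ preserves $\Sigma_1$-ness. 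Packaging $\alpha,\beta,\delta$ (and the index of $Q$) into a single ordinal via the iterated Cantor pairing remarked on above, $\psi$ has exactly the form required by Lemma \ref{2.3}.

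It then remains to check existence and uniqueness of the solution. For existence take $w=y$ and the witness $z=x$, which lies in $L[y]=L[w]$ because $x\in L[y]$; then $L[x]\models\varphi(x,\beta)$ holds by the choice of $\varphi$, and $Q^{x}(\delta)=y=w$, so $L[w]\models\psi(w)$. For uniqueness suppose $w\subseteq\alpha'$ and $L[w]\models\psi(w)$ with witness $z$. The statement ``$L[z]\models\varphi(z,\beta)$'' is absolute, since it concerns only the inner model $L[z]$, so it holds in $V$; by the global uniqueness of the recognizing formula for $x$ this forces $z=x$, and in particular $x\in L[w]$. As OTM-computations are absolute, $Q^{z}(\delta)=Q^{x}(\delta)=y$ as computed in $L[w]$ agrees with its value in $V$, whence $w=y$. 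Thus $y$ is the unique solution of $\psi$, and Lemma \ref{2.3} yields that $y$ is recognizable.

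The crux, and the step I expect to be the main obstacle, is precisely this transfer of uniqueness: it is essential that the recognizing condition for $x$ singles out $z$ \emph{absolutely and globally}, so that recovering $x$ inside an arbitrary candidate model $L[w]$ and then applying the fixed constructible map $z\mapsto Q^{z}(\delta)$ pins down $w=y$. Note that both halves of constructible equivalence are genuinely used: $y\in L[x]$ to produce $Q$, and $x\in L[y]$ to guarantee that the witness $z=x$ is available inside $L[w]=L[y]$ in the existence step. Had one instead tried the direct machine route — on input $w$, compute a candidate for $x$, test it with the recognizer for $x$, then recompute $y$ and compare — one would have to bound or otherwise tame the computations on the ``wrong'' inputs $w\neq y$ in order to secure halting on \emph{all} inputs; the $\Sigma_1$ formulation sidesteps this entirely, since the bounded $L_\gamma[w]$-evaluation built into the proof of Lemma \ref{2.3} already halts on every input.
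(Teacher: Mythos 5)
Your proof is correct and follows essentially the same route as the paper's: both pass through the $\Sigma_1$-characterisation of Lemma \ref{2.3} and build a composite $\Sigma_1$ formula that recovers $x$ inside $L[w]$ and then checks that $w$ is the image of $x$ under a fixed constructible map, using $y\in L[x]$ for that map and $x\in L[y]$ for the existence step. The only cosmetic difference is that the paper pins down $x$ inside $L[w]$ as the $\beta$'th set in the canonical well-order and produces $y$ as the $\gamma$'th set of $L[x]$, whereas you pin down $x$ existentially via the uniqueness of its recognizing formula and produce $y$ via the OTM-program from Theorem \ref{relOTM}; these are interchangeable.
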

\begin{proof} 
\iffalse 
Suppose that $R$ recognizes $x$ in the parameter $\beta$. 
Suppose that $P$ computes $x$ from $y$ in the parameter $\gamma$. 
Suppose that $Q$ computes $y$ from $x$ in the parameter $\delta$. 
Suppose that $\alpha$ is the running time of $P^{y}(\gamma)$. 
%Moreover, let $P$, $\gamma$ be such that $P$ computes $x$ from $y$ in the parameter $\gamma$
%and let $Q$, $\delta$ be such that $Q$ computes $y$ from $x$ in the parameter $\delta$. Moreover, let $\alpha$ be the running time of $P^{y}(\gamma)$. % and let $\beta$ be the running time of $P^{x}(\delta)$.
We define a procedure $S$ which recognizes $y$ in the parameters $\alpha,\beta,\gamma,\delta$. 
%$\{\beta,\gamma,\delta,\alpha,\gamma\}$ (or rather, by the remark above, a code of this tuple).
%Given $z$ in the oracle, $S$ proceeds as follows to test whether $z=y$. 
Suppose that $\bar{y}$ is an oracle for $S$. 
\begin{enumerate-(i)}
 \item 
We run $P^{\bar{y}}(\gamma)$ for $\alpha$ many steps. If the computation has not halted, then $\bar{y}\neq y$. 
%stopped by then, then $y\neq z$ by the definition of $\alpha$. 
Otherwise suppose that $\bar{x}$ is the output of this computation.
 \item 
%If $\bar{y}=y$, then $\bar{x}=x$. 
We run $R^{\bar{x}}(\beta)$. 
If the output is $0$, then $\bar{x}\neq x$ and hence $\bar{y}\neq y$. %If the output is $1$, then $\bar{x}=x$. 
Therefore we only need to consider the case that the output is $1$ and hence $\bar{x}=x$. 
 \item 
 We run $Q^{\bar{x}}(\delta)$. Since $\bar{x}=x$, the computation will stop with output $y$. 
 We compare the output with $\bar{y}$. If they agree then $\bar{y}=y$, and otherwise $\bar{y}\neq y$. 
\end{enumerate-(i)}
%seien x und y konstruktibel äquivalent, also x\in L[y] und y\in L[x]. sei x erkennbar. 
%sei P ein programm, das aus x das y berechnet und sei Q ein programm, das aus y das x berechnet (die parameter betrachte ich der einfachheit halber als teil des "programms"). 
%um y zu erkennen, tun wir folgendes: es sei \alpha die laufzeit von Q im input y. wir übergeben \alpha als parameter; außerdem sei \beta die laufzeit von P mit input x, auch \beta nehmen wir 
%als parameter. dann lassen wir Q mit input z laufen. wenn es vor \alpha nicht stoppt, ist z<>y. wenn es stoppt, etwa mit output z', lassen wir das programm zur erkennung von x auf z' laufen. wenn z' nicht erkannt wird, ist z<>y.
%andernfalls lassen wir jetzt P mit input z' laufen; wenn es nicht in \beta vielen schritten stoppt, ist z<>y. wenn es stoppt, etwa mit output z'', vergleichen wir z'' mit y. sind sie ungleich, ist z<>y, sonst ist z=y.
\fi 
We use the characterisation of recognizable sets in Lemma \ref{2.3}. Let $ \varphi(v,\alpha)\in \Sigma_{1}$ have the unique solution $x$ in the parameter $\alpha$ in models of the form $L[z]$.  Let $x$ be the $\beta$'th set in $L[y]$ and $y$ the $\gamma$'th set in $L[x]$. We now check that $y$ is the unique set of ordinals so that $L[y]$ satisfies the $\Sigma_{1}$ formula  which states that the $\beta$'th set $z$ in $L[y]$ satisfies $\varphi(z,\alpha)$ and $y$ is the $\gamma$'th set in $L[z]$. 
\end{proof} 

In the previous lemma, it is not sufficient to assume that $y\in L[x]$. For example, suppose that $x=0^{\#}$ and $y$ is a Cohen real over $L$ which is constructible from $x$. 

However, recognizability from a fixed ordinal is not absolute. For instance, $0^{\#}$ is recognizable from $\omega_1$, if $0^{\#}$ exist, but it is not recognizable from any countable ordinal, as we see below. Therefore $0^{\#}$ is not recognizable from $\omega_1^V$ in any generic extension of $V$ in which $\omega_1^V$ is countable. We prove below that the recognizable closure for subsets of $\omega$, defined by iterating recognizability, is absolute if there is a measurable cardinal above a Woodin cardinal. 

A typical phenomenon for infinitary computations is the existence of sets of ordinals which are recognizable, but not computable. Following \cite{HaLe}, we call such sets \emph{lost melodies}. 

\begin{defini} 
Suppose that $x\subseteq\alpha$ is recognizable, but not computable. Then $x$ is a lost melody at $\alpha$. %For $\alpha=\omega$ we call $x$ simply a lost melody. 
%Let $\alpha\in\text{On}$, $x\subseteq\alpha$. If $x$ is parameter-OTM-recognizable, but not parameter-OTM-computable, $x$ is called an $\alpha$-lost melody. When $\alpha=\omega$, the prefix is dropped.
\end{defini} 

%We can regard recognizability as a notion of epistemic accessibility to a an idealized agent working in ordinal time and space. 
%As mentioned in the introduction, it is then quite natural
%to see not only what is recognizable, but also what is recognizable relative to something which is itself recognizable, and recognizable relative to that and so on:
As mentioned in the introduction, we can iterate recognizability. 

\begin{defini}{\label{recclos}} 
Suppose that $x,y\subseteq \alpha$. 
\begin{enumerate-(i)} 
\item 
$x$ is an element of the \emph{recognizable closure $\CC^{\beta}(y)$ of $y$ in the parameter $\beta$} if there is a sequence $\langle x_i\mid i\leq n\rangle$ with $x_0=x$ and $x_n=y$ such that $x_i$ is recognizable from $x_{i+1}$ with the parameter $\beta$ for all $i<n$. 
\item 
$x$ is an element of the \emph{recognizable closure $\CC_\alpha^{\beta}(y)$ of $y$ for subsets of $\alpha$ in the parameter $\beta$} if we only allow subsets $x_i$ of $\alpha$ in the previous case. 
%there is a sequence $\langle x_i\mid i\leq n\rangle$ with $x_0=x$ and $x_n=y$ such that $x_i$ is recognizable from $x_{i+1}$ with the parameter $\beta$ for all $i<n$. 
\item 
$x$ is an element of the \emph{recognizable closure $\CC_\alpha(y)$ of $y$ for subsets of $\alpha$} if $x\in \CC_\alpha^{\beta}(y)$ for some $\beta$. 
\item 
$x$ is an element of the \emph{recognizable closure} $\CC(y)$ of $y$ if $x\in \CC_\alpha(y)$ for some $\alpha$. 
\end{enumerate-(i)} 
We omit $y$ if $y=\emptyset$. 
%Let $\alpha\in\text{On}$, $x,y\subseteq\alpha$. We say that $x$ is iteratedly recognizable from $y$ if and only if there are $y=x_{0},...,x_{n}=x\subseteq\alpha$
%such that for each $i\in\{1,...,n\}$, $x_{i}$ is recognizable from $x_{i-1}$. EA$_{\alpha}(y)$ denotes the set of all $y\subseteq\alpha$ that are iteratedly recognizable from $y$, called
%the $\alpha$-recognizable closure of $y$. We drop the prefix $\alpha$ when $\alpha=\omega$ and write `the $\alpha$-recognizable closure' if $y=\emptyset$. In particular, EA denotes EA$_{\omega}(\emptyset)$.
\end{defini}

%\begin{lemma}{\label{recogcomp}}
%If $x\subseteq\alpha$ is computable from $y$, then $x$ is recognizable from $y$. Also, $x$ is recognizable from $0$ iff $x$ is recognizable.
%\end{lemma}
%\begin{proof}
%If $x$ is computable from $y$, we can recognize it from $y$ by first computing it and then comparing the number that was computed to the oracle. The second claim is obvious.
%\end{proof}

Note that the recognizable closure $\CC$ is closed under constructibility, i.e. a set of ordinals $x$ is in $\CC$ if $x\in L[x_0,\dots,x_n]$, where $x_0,\dots,x_n$ are sets of ordinals in $\CC$. 
In particular $\CC$ is closed under joins.

The iteration in the definition of the recognizable closure is necessary, since relativized recognizability is in general not transitive by  Lemma \ref{nontrans} below. 
However, two iteration steps suffice by the following argument. 
% and the second iteration can in fact be replaced with the following. 
%Es sei $EA$ der reelle erkennbare Abschluss, d.h. alle reellen Zahlen, die erkennbar, relativ zu erkennbaren erkennbar, relativ zu aus erkennbaren erkennbaren erkennbar usw. sind%.
%Es ist leicht zu sehen, dass zweifache Iteration gen\"ugt, weil endliche `Reduktionsfolgen' stets selbst erkennbar sind und ihr Endglied aus der Reduktionsfolge.\\

\begin{lemma}\label{twostep} 
Suppose that $x\in \CC_\alpha(z)$. 
Then there is some $y$ such that $y$ is recognizable from $z$ and 
%$\in\CC_\alpha(z)$ such that 
$x$ is computable from $y$. 
%Suppose that $x,y\subseteq\alpha$ such that $x\in\text{EA}_{\alpha}(y)$. Then there is $z$ such that $x\leq^{r}z\leq^{r}y$. Thus, two iteration steps suffice for forming the 
%recognizable closure. In fact, we can chose $z$ such that $x$ is primitive recursive in $z$.
\end{lemma}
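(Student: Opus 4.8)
The plan is to collapse the entire recognizability chain witnessing $x\in\CC_\alpha(z)$ into a single set $y$ that packages all the intermediate sets, and then to recognize $y$ from $z$ by running the individual recognizers in sequence, starting at the $z$-end and working inward.

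First I would unwind the hypothesis. Since $x\in\CC_\alpha(z)$, there is an ordinal $\beta$ and a chain $x=x_0,x_1,\dots,x_n=z$ of subsets of $\alpha$ such that, for each $i<n$, the set $x_i$ is recognizable from the oracle $x_{i+1}$ with parameter $\beta$; fix OTM-programs $P_0,\dots,P_{n-1}$ witnessing this, so that with the correct oracle $x_{i+1}$ and parameter $\beta$, the program $P_i$ halts on every input $w\subseteq\alpha$ with $\delta_{x_i,w}$ in the first cell. Since $z$ is trivially recognizable from $z$, after padding I may assume $n\ge 1$. I then let $y$ be a single set of ordinals coding the finite sequence $\langle x_0,x_1,\dots,x_{n-1}\rangle$ via iterated pairing, with each slot a subset of $\alpha$. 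Because $x=x_0$ is a decodable component of $y$, we have $x\in L[y]$, so $x$ is computable from $y$ by Theorem \ref{relOTM}.

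Next I would build a program $R$ recognizing $y$ from the oracle $z$, using parameters $\beta$ and $\alpha$ (with the finitely many indices $P_0,\dots,P_{n-1}$ and the number $n$ hard-coded). On input $\bar y$, the program $R$ decodes $\bar y$ into a candidate sequence $\langle \bar x_0,\dots,\bar x_{n-1}\rangle$ of subsets of $\alpha$ and proceeds downward: it runs $P_{n-1}$ with oracle $z$, parameter $\beta$, on input $\bar x_{n-1}$; if the output is $0$ it halts with $0$, and otherwise it has certified $\bar x_{n-1}=x_{n-1}$ and continues by running $P_{n-2}$ with oracle $\bar x_{n-1}$ on input $\bar x_{n-2}$, and so on, at stage $i$ running $P_i$ with oracle $\bar x_{i+1}$ on input $\bar x_i$ only after $\bar x_{i+1}=x_{i+1}$ has been certified. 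If every stage returns $1$ it halts with $1$, and otherwise with $0$. By induction on the stages, $R$ accepts $\bar y$ exactly when $\bar x_i=x_i$ for all $i<n$, that is, exactly when $\bar y=y$, yielding $\delta_{y,\bar y}$ as required.

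The one point demanding care — and the reason for the downward order of evaluation — is that a recognizer $P_i$ is only guaranteed to halt when supplied with the \emph{correct} oracle $x_{i+1}$; on a wrong oracle it may diverge. The downward ordering ensures that $P_i$ is invoked with oracle $\bar x_{i+1}$ only after that value has been verified equal to $x_{i+1}$, so at every stage the oracle is correct and, by the defining property of recognizability, $P_i$ halts on the subset $\bar x_i\subseteq\alpha$. Hence $R$ halts on every input $\bar y$, so $y$ is genuinely recognizable from $z$, completing the argument. One could instead phrase the recognizer for $y$ through the $\Sigma_1$ characterization of Lemma \ref{2.3}, but the sequential-oracle construction makes the halts-on-all-inputs requirement transparent, which is exactly the delicate feature here.
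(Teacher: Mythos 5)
Your proof is correct and takes essentially the same approach as the paper, whose entire argument is to let $y$ be the join of the witnessing chain and assert that $y$ is recognizable (from $z$) and $x$ computable from it. Your downward-order evaluation of the recognizers $P_{n-1},\dots,P_0$ is exactly the detail the paper leaves implicit, and it correctly handles the one delicate point: each $P_i$ is only guaranteed to halt when run with the already-certified correct oracle $x_{i+1}$.
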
 
\begin{proof} 
Suppose that $\vec{x}=\langle x_i\mid i\leq n\rangle$ witnesses that $x\in \CC_\alpha(z)$ 
%Let $x_{0},...,x_{n}$ witness $x\in\text{EA}_{\alpha}(y)$ 
as in Definition \ref{recclos}. 
Suppose that $y$ is the join of $\vec{x}$. 
Then $y$ is recognizable and $x$ is computable from $y$. 
%For $i\in\{1,...,n\}$, let $P_{i}$ be a program, $\alpha_{i}\in\text{On}$ such that
%$P_{i}$ recognizes $x_{i}$ from $x_{i-1}$ in the parameter $\alpha$. Let $z:=x_{0}\oplus(x_{1}\oplus(x_{2}...(x_{n-1}\oplus x_{n})...)$.
%Then $z$ is recognizable from $y$ in the parameters $(\alpha_{1},...,\alpha_{n})$ by first checking whether the first component of the oracle $a$ is $y$ and then
%checking whether $P_{i}^{x_{i}\oplus x_{i-1}}(\alpha_{i})\downarrow=1$ for $i\in\{1,...,n\}$. Now clearly $x=x_{n}$ is recognizable and in fact primitive recursive in $z$.
%If $x_0,...,x_n$, $x_{i+1}$ is recognizable from $x_i$, and $x_0$ is recognizable, then $x_0\oplus .... \oplus x_n$ is recognizable and $x_n$ is recognizable from this. 
\end{proof}

%\todo[inline]{Das folgende ist ein Indiz, dass der erkennbare Abschluss ein Modell von $\mathsf{ZFC}$ sein k\"onnte.} 

The recognizable closure $\CC$ has the following absoluteness property.  

\begin{lemma} 
$\CC$ is $\Sigma^1_2$-correct in $V$. 
%EA is provably $\Pi^1_{2}$-correct, that is, if ZFC proves a theorem of the form $\forall{x}\exists{y}\phi(x,y)$ with $\phi$ a $\Delta_{0}$-formula,
%then we have that $\forall{x\in\text{EA}}\exists{y\in\text{EA}}\phi(x,y)$
\end{lemma}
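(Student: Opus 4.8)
The plan is to reduce everything to two facts that are already available: that $\CC$ is closed under constructibility, so that $L[\bar a]\subseteq\CC$ for every finite tuple $\bar a$ of reals in $\CC$, and Shoenfield absoluteness between $V$ and the inner models $L[\bar a]$. I read the statement ``$\CC$ is $\Sigma^1_2$-correct in $V$'' as the assertion that for every $\Sigma^1_2$ formula $\varphi(v)$ and every real parameter $a\in\CC$, one has $V\models\varphi(a)$ if and only if $\varphi(a)$ holds when all real quantifiers are restricted to $\CC\cap\R$.

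First I would isolate the key intermediate claim: for every real $b\in\CC$, the $\Sigma^1_1(b)$ and $\Pi^1_1(b)$ statements are absolute between $\CC$ and $V$; that is, $\CC$ is $\Pi^1_1$-correct for parameters in $\CC$. This is where closure under constructibility does the work. A true $\Sigma^1_1(b)$ statement $\exists y\,\theta(y,b)$ is in particular $\Sigma^1_2(b)$, so by Shoenfield absoluteness it has a witness in $L[b]$; since $b\in\CC$ and $\CC$ is closed under constructibility, $L[b]\subseteq\CC$, and the witness lies in $\CC\cap\R$. Conversely any witness in $\CC\cap\R$ is a genuine real, so the statement holds in $V$. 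Hence $\Sigma^1_1(b)$, and dually $\Pi^1_1(b)$, truth does not change when the real quantifiers are relativized to $\CC$.

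With this in hand the two directions of $\Sigma^1_2$-correctness are symmetric. Write $\varphi(a)=\exists x\,\psi(x,a)$ with $\psi\in\Pi^1_1$. If $V\models\varphi(a)$, then by Shoenfield absoluteness applied to $L[a]\subseteq\CC$ there is a witness $x_0\in L[a]\subseteq\CC$ with $V\models\psi(x_0,a)$; by the intermediate claim applied to $b=x_0\oplus a\in\CC$, also $\psi(x_0,a)$ holds relativized to $\CC$, so $\CC$ satisfies $\varphi(a)$. Conversely, if $\CC$ satisfies $\varphi(a)$ with witness $x_0\in\CC\cap\R$, then $\psi(x_0,a)$ holds relativized to $\CC$; again by the intermediate claim for $b=x_0\oplus a$, the formula $\psi(x_0,a)$ holds in $V$, and since $x_0\in V$ this gives $V\models\varphi(a)$.

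The step I expect to be the main obstacle, and the only place requiring care, is the intermediate $\Pi^1_1$-correctness claim, since a false $\Pi^1_1$ statement could in principle become true after restricting its universal real quantifier to the smaller class $\CC$. The point to stress is that this cannot happen, precisely because such a counterexample is a $\Sigma^1_1$, hence $\Sigma^1_2$, assertion whose witness Shoenfield places inside $L[b]\subseteq\CC$; so no counterexample is lost in passing to $\CC$. Everything else is a routine application of Shoenfield absoluteness together with the closure of $\CC$ under constructibility noted above.
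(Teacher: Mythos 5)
Your proof is correct and follows essentially the same route as the paper's: both directions come down to Shoenfield absoluteness between $V$ and $L[a]$ together with the closure of $\CC$ under constructibility, and your intermediate $\Pi^1_1$-correctness claim is exactly what the paper invokes as ``$\Pi^1_1$ absoluteness between $L[x,y]$ and $V$ for all $y\subseteq\omega$.'' Your write-up just makes explicit the step the paper leaves terse.
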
 
\begin{proof} 
Suppose that $x\subseteq\omega$, $x\in \CC$ and $\varphi(x)$ is a $\Sigma^1_2$-formula which holds in $V$. 
Since $L[x]\subseteq \CC$, $\varphi(x)$ holds in $\CC$. 
If $x\in\CC$ and $\varphi(x)$ is a $\Sigma^1_2$-formula which holds in $\CC$. Then $\varphi(x)$ holds in $V$ by $\Pi^1_1$ absoluteness between $L[x,y]$ and $V$ for all $y\subseteq\omega$. 
If $\varphi(x)$ holds in $\CC$, then $\varphi(x)$ holds in $V$ by $\Pi^1_1$ absoluteness. 
%Given $x\in\text{EA}$, we run an OTM-program $P$ that enumerates $L[x]$ in the oracle $x$, given by Theorem \ref{enumerateL}. Whenever a code for some $L[x]$-level $L_{\alpha}[x]$ is
%written on the tape, check whether $L_{\alpha}[x]\models\exists{y}\phi(x,y)$. If so, return the $<_{L[x]}$-smallest such $y$. As $L[x]\models\text{ZFC}$, such a code will eventually
%be found. Hence the $<_{L[x]}$-smallest $y$ with $\phi(x,y)$ is pOTM-computable in the oracle $x$, and a fortiori recognizable from $x$. Thus if $x\in\text{EA}$, we have $y\in\text{EA}$
%as well.
% Beweist ZFC einen satz der form $\forall{x}\exists{y}\phi(x,y)$ mit $\phi\in\Delta_{0}$, so existiert zu jedem $x\in $EA ein $y\in$ EA
%mit $\phi(x,y)$: Denn wir k\"onnen im parameter $y$ das ZFC-modell $L[y]$ aufz\"ahlen und nach dem bez\"uglich der durch $y$ induzierten wohlordnung
%kleinsten $x$ suchen, dass die gew\"unschte eigenschaft hat bzw. es identifizieren.
%EA ist also "provably-$\Pi_2$-korrekt". 
\end{proof}

%(4) wir k\"onnen ganz allgemeine (transitive) mengen behandeln: ist $x$ eine beliebige menge und $f:\kappa->tr(x)$ eine bijektion zwischen der transitiven h\"ulle von $x$ und einer ordinalzahl $\kappa$, so k\"onnen
% wir $x$ codieren als $\{\pi(\alpha,\beta):f(\alpha)\in f(\beta)\}$.
%daf\"ur kriegt man zumindest (1) und (2) (bei geeigneter formalisierung des "joins").\\

%(5) allgemeiner ansatz: zeige abschlusseigenschaften - etwa im sinne der zweitstufigen arithmetik - von EA (oder auch allgemeiner vom erkennbaren abschluss bel. transitiver mengen). wir haben etwa abgeschlossenheit unter k\"onigs lemma 
%(erkennbare b\"aume haben erkennbare pfade) 
%und viel mehr nach (3). wir haben weiter abgeschlossenheit unter sharps und daggers. etc.\\

%The following question is open. 
%
%\begin{question} 
%Is $\CC$ the class of sets of ordinals of a transitive model of $\Sigma_1$-collection? 
%model of Krip$\mathsf{KP}$? 
%\end{question} 
%
%We will show that the recognizable closure for subsets of $\omega$ is absolute to generic extensions, if there is proper class of Woodin cardinals. 
%Whether this holds for all uncountable cardinals is open. 

%%%%%%%%%%%%%%%%%%%%%%%%%%%%%%%%%%%%%%%%%%%%%%%%%%%
%%%%%%%%%%%%%%%%%%%%%%%%%%%%%%%%%%%%%%%%%%%%%%%%%%%
\section{Recognizable sets} 

In this section, we prove various facts about recognizable sets and lost melodies. 
We begin with recognizability without ordinals parameters for various infinite time machines. 
We then consider recognizability for fixed ordinals parameters, and finally recognizability for arbitrary ordinal parameters. 
We prove various facts about recognizability in $L$ and in generic extensions of $L$, and about the relationship between the recognizable closure and $\HOD$.

%%%%%%%%%%%%%%%%%%%%%%%%%%%%%%%%%%%%%%%%%%%%%%%%%%%
%%%%%%%%%%%%%%%%%%%%%%%%%%%%%%%%%%%%%%%%%%%%%%%%%%%
\subsection{Machines with fixed ordinal parameters} 

Recognizability is a general concept associated with models of infinitary computation that has been studied for Infinite Time Register Machines (ITRMs) \cite{ITRM} and Infinite Time Turing Machines (ITTMs) \cite{HaLe}. 
The definition of recognizability for ITRMs and ITTMs is completely analogous to Definition \ref{relrec} (\cite{HaLe},  \cite{Ca}). 
We denote by $\CC^M$ the recognizable closure for subsets of $\omega$ for a machine model $M$ without parameters. %, so that $\CC^{\text{OTM}}=\CC^0$. 

The notion of recognizable closure is remarkably stable, and in fact the recognizable closure for ITRMs, ITTMs and OTMs without ordinal parameters is the same. 
Moreover, the relation  that $x$ is in the recognizable closure of $y$ for ITRMs, ITTMs and OTMs without ordinal parameters coincides with $\Delta_{2}^{1}$-reducibility. 
Thus the recognizable closure for OTMs with ordinal parameters is generalization of $\Delta_{2}^{1}$-reducibility in at least two ways. First by admitting arbitrary ordinal parameters, second by admitting arbitrary sets of ordinals. % through its applicability to arbitrary sets or ordinals and not just to subsets of $\omega$. 

We use the following notions to calculate the recognizable closure for ITTMs, ITRMs and OTMs. 

\begin{defini} 
Suppose that $x\subseteq\omega$ and $\alpha$ is an ordinal. 
\begin{enumerate-(i)} 
\item 
The ordinal $\alpha$ is \emph{$\Sigma_{1}$-fixed} if there is some $\Sigma_{1}$-statement $\varphi$ in the language of set theory such that $\alpha$ is minimal with $L_{\alpha}\models\varphi$. 
\item 
Let $\sigma$ denote the supremum of the $\Sigma_1$-fixed ordinals. %:=\text{sup}\{\alpha\in\text{On}:`\alpha\text{ is }\Sigma_{1}-\text{fixed}'\}$. 
\item 
The ordinal $\alpha$ is \emph{$\Sigma_{1}$-stable (in $x$)} if $L_{\alpha}\prec_{\Sigma_{1}}L$ ($L_{\alpha}[x]\prec_{\Sigma_{1}}L[x]$). 
%\item 
%$\alpha$ is \emph{$\Sigma_1$-stable} if $\alpha$ is $\Sigma_1$-stable in $0$. 
\item 
%For $x\subseteq\omega$ and $\alpha\in\text{On}$, 
Let $\sigma(\alpha)$ ($\sigma(\alpha,x)$) denote the least $\Sigma_{1}$-stable (in $x$) ordinal $\tau>\alpha$. 
\end{enumerate-(i)} 
\end{defini}

Note that the ordinal $\sigma$ in (ii)  is well known to be the least $\Sigma_{1}$-stable, here also notated $\sigma(0,0)$, and is countable. 
%By condensation in $L$, each $\Sigma_{1}$-fixed ordinal is countable, and hence so is $\sigma$ as a supremum of a countable set of countable ordinals. 
As the halting  of an OTM-program (without ordinal parameters) is a $\Sigma_{1}$-fact, and the search for the ordinal $\alpha$ fixed by some sentence $\varphi$ as in (i), is OTM-programmable, 
 $\sigma$ is equal to 
the supremum of such program halting times, \cite[Lemma 7]{CS}. %, and a real number $x$ is computable by an OTM without ordinal parameters if and only if $x\in L_{\sigma}$. 

\begin{lemma}\label{wrecclos}
$\CC^{\text{ITRM}}=\CC^{\text{ITTM}}=\CC^{\text{OTM}}=L_{\sigma}$. 
%F\"ur $ITRM$s, $ITTM$s und parameterfreie $OTM$s ist der erkennbare Abschluss der gleiche: N\"amlich $L_{\sigma}$, wobei $\sigma$ das Supremum der $\Sigma_1$-fixierten Ordinalzahlen ist (also der $\alpha$, f\"ur
%die in $L_{\alpha}$ eine gewisse $\Sigma_1$-Aussage $\phi$ erstmals wahr wird. 
\end{lemma}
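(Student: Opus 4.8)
The plan is to prove all three equalities at once by establishing the chain
$$L_\sigma\subseteq\CC^{\text{ITRM}}\subseteq\CC^{\text{ITTM}}\subseteq\CC^{\text{OTM}}\subseteq L_\sigma,$$
so that the two central containments are cheap and only the outer bounds carry content. For the middle inclusions I would use that an OTM simulates an ITTM and an ITTM simulates an ITRM: any program recognizing a set for the weaker model runs unchanged on the stronger one with the same output, and since a simulation preserves both halting behaviour and the uniqueness of the accepted input, relative recognizability transfers from the weaker to the stronger model, and hence so does the iterated recognizable closure.

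For the upper bound $\CC^{\text{OTM}}\subseteq L_\sigma$ I would first collapse the iteration using Lemma \ref{twostep}: every $x\in\CC^{\text{OTM}}$ is computable without parameters from some $y$ that is recognizable without parameters, so it suffices to show (i) every parameter-free recognizable $y$ lies in $L_\sigma$, and (ii) $L_\sigma$ is closed under parameter-free computability. For (i), Lemma \ref{2.3} presents $y$ as the unique $z$ with $L[z]\models\varphi(z)$ for a $\Sigma_1$ formula $\varphi$; reflecting $\varphi$ to a countable well-founded level shows this defining condition is $\Sigma^1_2$, so $y$ is a $\Sigma^1_2$ singleton, hence $\Delta^1_2$ and constructible by Shoenfield. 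Once $y\in L$, enumerating $L$ by the parameter-free form of Theorem \ref{relOTM}(2) and testing each candidate with the recognizer computes $y$ by a halting run whose length is below $\sigma$, since $\sigma$ is the supremum of parameter-free halting times; thus $y\in L_\sigma$. For (ii), if $x$ is computed from $y\in L_\sigma$ then ``$P^y$ halts with output $x$'' is a $\Sigma_1$ fact with parameter $y\in L_\sigma$, and $\Sigma_1$-stability $L_\sigma\prec_{\Sigma_1}L$ together with absoluteness of OTM computations places the halting stage, and hence $x$, inside $L_\sigma$.

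The lower bound $L_\sigma\subseteq\CC^{\text{ITRM}}$ is where the work concentrates, and I would exploit that the $\Sigma_1$-fixed ordinals are cofinal in $\sigma$. Given $x\in L_\sigma$, choose a $\Sigma_1$-fixed $\delta<\sigma$, witnessed by a sentence $\psi$ minimal at $L_\delta$, with $x\in L_\delta$. The goal is to place the $<_L$-least real code $c_\delta$ of $L_\delta$ into the ITRM recognizable closure; since $x$ is constructible from $c_\delta$ and the closure is closed under constructibility (as noted after Definition \ref{recclos}), this gives $x\in\CC^{\text{ITRM}}$. To recognize $c_\delta$ I would check of a candidate $z$ that it codes a well-founded extensional model of a sufficient fragment of $V=L$ satisfying $\psi$ with no proper initial segment satisfying $\psi$—minimality forces the collapse to be $L_\delta$—and that $z$ is $<_L$-least among such codes; the well-foundedness test and the first-order checks are within ITRM reach, while uniqueness is supplied by the $<_L$-least clause. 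To perform the $<_L$-least verification on the weak ITRM model, I would proceed by recursion along the $\Sigma_1$-fixed ordinals, recognizing $c_\delta$ relative to a code of the previous level so that the required fragment of the $<_L$-order is furnished by the oracle, and the chain of such relative recognitions keeps every $c_\delta$, and so every real of $L_\sigma$, inside the closure.

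I expect the main obstacle to be exactly this last point: certifying that the iterated, relativized ITRM recognizable closure already exhausts all of $L_\sigma$ despite the modest raw strength of ITRMs. Two subtleties must be handled carefully—the uniqueness demanded by recognizability, which forces the $<_L$-least (or some equally canonical) choice of level code and thus a verification that no $<_L$-earlier code qualifies, and the bookkeeping guaranteeing that recognition relative to earlier level codes climbs cofinally through $\sigma$ rather than stalling at a smaller stable ordinal. As a fallback for this inclusion I would keep in reserve the identification of the parameter-free recognizable closure with $\Delta^1_2$-reducibility (Corollary \ref{characterization of recognizable from alpha}) together with the descriptive-set-theoretic fact that the reals which are $\Delta^1_2$ in a countable ordinal are precisely those of $L_\sigma$; this bypasses the machine-level $<_L$ computations at the cost of routing through the $\Delta^1_2$ theory.
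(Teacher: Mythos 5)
Your chain $L_\sigma\subseteq\CC^{\text{ITRM}}\subseteq\CC^{\text{ITTM}}\subseteq\CC^{\text{OTM}}\subseteq L_\sigma$ is a clean reorganization (the paper instead runs the upper and lower bounds uniformly for all three machine types), and your upper bound is essentially the paper's: collapse the iteration via Lemma \ref{twostep}, observe that ``some oracle is accepted by $P$'' is a $\Sigma_1$ fact over $L$ so the recognized real appears below a $\Sigma_1$-fixed ordinal, and use $\Sigma_1$-stability (the paper says admissibility) of $L_\sigma$ to close under parameter-free computation. That part is fine.

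The genuine gap is in the lower bound $L_\sigma\subseteq\CC^{\text{ITRM}}$. Your proposed ``recursion along the $\Sigma_1$-fixed ordinals, recognizing $c_\delta$ relative to a code of the previous level'' cannot be carried out inside the recognizable closure: Definition \ref{recclos} only admits \emph{finite} chains $\langle x_i\mid i\leq n\rangle$ of relative recognitions, while the $\Sigma_1$-fixed ordinals below a given $\delta$ are in general infinite in number (and $\delta$ may even be a limit of them, so there is no ``previous level''). So your recursion stalls after any finite initial segment of the $\Sigma_1$-fixed ordinals and does not exhaust $L_\sigma$. The resolution in the paper is that no relativization is needed at all: the $<_L$-least code $c$ for the minimal $L_\alpha\models\varphi$ is recognizable in a \emph{single} step, because the candidate oracle $z$ itself furnishes everything required for the $<_L$-least verification. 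Since $L_\alpha$ is its own $\Sigma_1$-hull with projectum $\omega$, the canonical code is definable over the structure coded by $z$; the recognizer checks that $z$ codes a well-founded extensional model of the right theory satisfying $\varphi$ minimally and that $z$ coincides with the canonical code computed from that very structure. Well-foundedness checking and first-order satisfaction over a coded countable structure are within reach of ITRMs (this is the content of the citation to \cite{ITRM}), so no external fragment of $<_L$ has to be supplied by an earlier oracle. Note also that your fallback through the $\Delta^1_2$ characterization only yields $L_\sigma\subseteq\CC^{\text{OTM}}$, which sits at the top of your chain; it does not rescue the inclusion into $\CC^{\text{ITRM}}$ that your decomposition actually requires.
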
 
\begin{proof} 
%Nach Shoenfield liegen die Erkennbaren alle in $L$, also auch die aus Erkennbaren Erkennbaren [CHECK!].
We first argue that all elements of $\CC^{\text{ITRM}}$, $\CC^{\text{ITTM}}$ and $\CC^{\text{OTM}}$ are elements of $L_{\sigma}$. 
Suppose that $P$ is a program for one of these machine types. 
The statement that 
%For ITRMs, ITTMs and parameter-free OTMs, the statement that for a program $P$, 
there is some $x\subseteq\omega$ such that $P^{x}\downarrow=1$ is a $\Sigma_{1}$-statement. 
This is absolute between $L$ and $V$ by Shoenfield absoluteness %, absolute between $V$ and $L$. 
and computations are absolute between transitive models of ZFC.
If $P$ recognizes some real number $x$,
then $L\models\exists{y}P^{y}\downarrow=1$ and hence $x\in L$. 
Since $\exists{y}P^{y}\downarrow=1$ is a $\Sigma_1$-statement, we have $x\in L_{\sigma}$. 
%Thus every real number that is recognizable by an ITRM, ITTM or parameter-free OTM is constructible. 
%Moreover, as the minimal $\alpha$ with $L_{\alpha}\models\phi$ for some parameter-free
%$\Sigma_{1}$-statement $\phi$ is by definition below $\sigma$, all real numbers that are recognizable by one of these machines are in fact contained in $L_{\sigma}$.
Since $L_{\sigma}$ is admissible, $\CC^M\subseteq L_{\sigma}$ for these machine models $M$ by the proof of Lemma \ref{twostep}. 
% clearly closed under primitive recursion, it follows that EA$_{\text{ITRM}},\text{EA}_{\text{ITTM}},\text{EA}_{\text{OTM}}^{0}\subseteq L_{\sigma}$.

Suppose that $x\subseteq\omega$ and $x\in L_\sigma$. 
%We now show that $L_{\sigma}\cap\mathfrak{P}(\omega)\subseteq$EA$_{\text{ITRM}},\text{EA}_{\text{ITTM}},\text{EA}_{\text{OTM}}^{0}$. Suppose that $x\in L_{\sigma}$ is a real number.
%By definition of $\sigma$, t
There is some $\Sigma_{1}$-fixed ordinal $\alpha<\sigma$ such that $x\in L_{\alpha}$. 
Suppose that $\varphi$ is a $\Sigma_1$-statement and $\alpha$ is minimal with $L_\alpha\vDash\varphi$. 
The $L_\alpha$ is its $\Sigma_1$-hull, hence its $\Sigma_1$-projectum is $\omega$. 
Then there is a surjection of $\omega$ onto $L_\alpha$ in $L_{\alpha+1}$ by acceptability. 
%As $L_{\alpha}$ is $\Sigma_{1}$-fixed, $L_{\alpha}$ is isomorphic
%to the $\Sigma_{1}$-Skolem hull of the empty set in $L_{\alpha}$, hence the $\Sigma_{1}$-projectum of $L_{\alpha}$ is $\omega$. So $L_{\alpha+1}$ contains a bijection between $\omega$ and $L_{\alpha}$
%and in fact contains the $<_{L}$-minimal real number $c_{\alpha}$ coding $L_{\alpha}$. Now $c_{\alpha}$ is easily recognizable by any of the machines in question by checking whether the oracle $z$
%codes a minimal $L$-stage in which $\phi$ holds and whether $z$ is $<_{L}$-minimal with this property. (The details of this checking are described in \cite{ITRM}.) 
Let $c\subseteq\omega$ denote the $L$-least code for $L_\alpha$. 
Then $c$ is recognizable by any of these machines by checking whether for the least $\beta$ such that $\varphi$ holds in $L_\beta$, the oracle $z$
%codes $L_\beta$ and $z$ 
is the $L$-least code for $L_\beta$ %in $L_{\beta+1}$ 
 (see \cite{ITRM}). 
%a minimal $L$-stage in which $\phi$ holds and whether $z$ is $<_{L}$-minimal with this property. (The details of this checking are described in \cite{ITRM}.) 
Moreover $x$ is Turing computable from $c$ and hence $x\in \CC^M$. 
%But now $x$ is coded
%by some $i\in\omega$ in $c_{\alpha}$ and can be computed from $c_{\alpha}$ by any of the models in question by identifying which $j\in\omega$ code natural numbers in $c_{\alpha}$ and then
%checking for each $j\in\omega$ that codes a natural number whether $p(j,i)\in c_{\alpha}$ (the details on how to identify the natural numbers coded in $c_{\alpha}$ can again be found in \cite{ITRM}).
%So $x$ is computable, and hence recognizable relative to the recognizable $c_{\alpha}$ and hence belongs to the recognizable closure. As $x$ was arbitrary, each real number in $L_{\sigma}$ belongs
%to the recognizable closure for each of ITRMs, ITTMs and parameter-free OTMs.
\end{proof} 

A weaker version of ITRMs was introduced in \cite{wITRM}, now called unresetting or weak Infinite Time Register Machines (wITRMs). 

\begin{lemma} 
$\CC^{\text{wITRM}}=L_{\omega_{1}^{\text{CK}}}\cap P(\omega)$. 
\end{lemma}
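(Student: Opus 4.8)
The plan is to mirror the proof of Lemma \ref{wrecclos}, with the least admissible ordinal $\omega_{1}^{\text{CK}}$ playing the role that $\sigma$ played there, and to exploit the descriptive set-theoretic tameness of unresetting computations. I would prove the two inclusions separately.

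For the inclusion $L_{\omega_{1}^{\text{CK}}}\cap P(\omega)\subseteq\CC^{\text{wITRM}}$, I would first recall from \cite{wITRM} that the wITRM-computable reals without parameters are exactly the hyperarithmetic reals, that is, $L_{\omega_{1}^{\text{CK}}}\cap P(\omega)$. It then suffices to observe that every wITRM-computable real $x$ is wITRM-recognizable: on input $z$, the recognizing program computes the bits of $x$ and compares them with $z$, halting with output $0$ at the first mismatch and with output $1$ if no mismatch has been found after $\omega$ comparison stages. This halts on every oracle with the correct value $\delta_{xz}$, so $x$ is recognizable and hence lies in $\CC^{\text{wITRM}}$.

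For the reverse inclusion the key point is a complexity calculation. The structural fact I would take from \cite{wITRM} is that, precisely because there is no resetting, a halting wITRM computation with oracle $z$ terminates below the least $z$-admissible ordinal $\omega_{1}^{\text{CK},z}$, and the map sending a stage to its configuration is $\Sigma_{1}$-definable over $L_{\omega_{1}^{\text{CK},z}}[z]$. Hence for each $i\in\{0,1\}$ the predicate ``the computation on input $z$ halts with output $i$'' is $\Sigma_{1}$ over the least $z$-admissible, uniformly in $z$, so by the Spector--Gandy theorem it is $\Pi^{1}_{1}$ in $z$. Now suppose $x$ is recognizable from an oracle $y$ by a program $P$. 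Since $P$ halts on \emph{every} input $z$ with output $\delta_{xz}$, the two sets $\{z:P\text{ halts on }z\text{ with output }1\}=\{x\}$ and $\{z:P\text{ halts on }z\text{ with output }0\}$ are $\Pi^{1}_{1}(y)$ and complementary, so each is $\Delta^{1}_{1}(y)$. As $n\in x\leftrightarrow\exists z\,(z\in\{x\}\wedge n\in z)$ is $\Sigma^{1}_{1}(y)$ and $n\in x\leftrightarrow\forall z\,(z\in\{x\}\to n\in z)$ is $\Pi^{1}_{1}(y)$, the real $x$ is $\Delta^{1}_{1}(y)$, i.e. $x\in L_{\omega_{1}^{\text{CK},y}}[y]$.

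Finally I would propagate this along a recognizability chain $x=x_{0},x_{1},\dots,x_{n}=\emptyset$ witnessing $x\in\CC^{\text{wITRM}}$. By downward induction, if $x_{i+1}\in L_{\omega_{1}^{\text{CK}}}$ then $x_{i+1}$ is hyperarithmetic, so $\omega_{1}^{\text{CK},x_{i+1}}=\omega_{1}^{\text{CK}}$ and $L_{\omega_{1}^{\text{CK}}}[x_{i+1}]=L_{\omega_{1}^{\text{CK}}}$; the previous paragraph, applied with oracle $x_{i+1}$, then gives $x_{i}\in L_{\omega_{1}^{\text{CK},x_{i+1}}}[x_{i+1}]=L_{\omega_{1}^{\text{CK}}}$. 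Thus $x\in L_{\omega_{1}^{\text{CK}}}$, which together with the first inclusion yields the claim. (Alternatively, one can invoke Lemma \ref{twostep} and the admissibility of $L_{\omega_{1}^{\text{CK}}}$, exactly as in the proof of Lemma \ref{wrecclos}.) The main obstacle is precisely the complexity bound on wITRM halting: everything hinges on the fact that an unresetting computation is confined to the next admissible set and is $\Sigma_{1}$ there, so that halting is genuinely $\Pi^{1}_{1}$; the totality requirement in the definition of recognizability then does the rest, forcing the two complementary halting predicates to pin the recognized real down to $\Delta^{1}_{1}=L_{\omega_{1}^{\text{CK}}}\cap P(\omega)$.
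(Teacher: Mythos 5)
Your proof is correct, but it takes a genuinely different route from the paper's. The paper disposes of the lemma in two sentences by citing \cite{Ca2} for the fact that recognizability and computability coincide for wITRMs and \cite{wITRM} for the identification of the wITRM-computable reals with the hyperarithmetic ones; the collapse of the iterated closure is left implicit in the (relativized) coincidence of the two notions. You instead re-derive the hard direction from scratch: you bound halting wITRM-computations below the least admissible in the oracle, pass through Spector--Gandy to see that the two complementary halting sets are $\Pi^1_1(y)$ and hence $\Delta^1_1(y)$, use the standard fact that a $\Delta^1_1(y)$ (indeed $\Sigma^1_1(y)$) singleton is hyperarithmetic in $y$, and then make the closure argument explicit via the observation that $\omega_1^{\text{CK},z}=\omega_1^{\text{CK}}$ and $L_{\omega_1^{\text{CK}}}[z]=L_{\omega_1^{\text{CK}}}$ for hyperarithmetic $z$. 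This buys a self-contained proof that exposes exactly where the totality requirement in the definition of recognizability is used, and it makes visible the step (iteration along the chain) that the paper's citation-based argument glosses over; the paper's version is of course much shorter. Two minor caveats: your sketched recognizing algorithm for the easy inclusion (``compare bit by bit for $\omega$ stages'') elides the register-overflow issue for unresetting machines, where a naive loop counter crashes at stage $\omega$ --- the fact you need is precisely the ``computable implies recognizable'' half of the \cite{Ca2} equivalence, which you could simply cite; and in the complexity calculation the admissible ordinal should really be taken relative to the joint oracle $y\oplus z$, though this does not affect the conclusion that the halting predicates are $\Pi^1_1(y)$ as predicates of $z$.
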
 
\begin{proof} 
Recognizability and computability coincide for wITRMs \cite{Ca2}. 
Moreover, the main result of \cite{wITRM} shows that the wITRM-computable reals coincide with the hyperarithmetical reals. 
%this implies that the recognizable closure for wITRMs is just $L_{\omega_{1}^{\text{CK}}}$, showing the drastic difference between wITRMs and their resetting cousins. 
\end{proof} 

%\begin{lemma} \label{characterization of Lsigma} 
%Suppose that $x\subseteq\omega$. 
%We consider computations without ordinal parameters. 
%For OTM-programs without ordinal parameters and $x\subseteq\omega$, t
%The following conditions are equivalent for $x\subseteq\omega$. 
%\begin{enumerate-(a)} 
%\item $x$ is computable. 
%\item $x$ is recognizable. 
%\item $x\in\CC^{\text{OTM}}$. 
%\item $x\in L_{\sigma}$. 
%\end{enumerate-(a)} 
%\end{lemma} 
%\begin{proof} 
%The equivalence of (c) and (d) is given by Lemma \ref{wrecclos}. 
%The equivalence of (a) and (d) follows from \cite[Lemma 7]{CS} or from the proof of Lemma \ref{wrecclos}. 
%Moreover (a) implies (b) and (b) implies (c). 
%By Lemma \ref{recogcomp}, (1) implies (2). Trivially, (2) implies (3). The equivalence of (3) and (4) is given by Lemma \ref{wrecclos}.
%Finally, the equivalence of (4) and (1) follows from Lemma 4 and Lemma 7 of \cite{CS}.
%\end{proof} 

%\begin{lemma} 
%If $\omega_1^L=\omega_1$, then the following conditions are equivalent. 
%\begin{enumerate} 
%\item 
%$x$ is $\Delta^1_2(\gamma)$, 
%\item 
%$x\in L_{\sigma(\gamma)}$, 
%\item 
%$x$ is OTM-computable in $\gamma$, 
%\item 
%$x$ is OTM-recognizable in $\gamma$, 
%\end{enumerate} 
%where $\sigma(\gamma)$ is the least $\Sigma_1$-stable ordinal $>\gamma$. 
%\end{lemma} 

The recognizable closure is closely connected with $\Delta_{2}^{1}$-degrees. 

\begin{defini}{\label{delta12degrees}}
Suppose that $x,y\subseteq\omega$ and $\alpha<\omega_1$. 
\begin{enumerate-(i)} 
\item 
$x$ is \emph{$\Delta_{2}^{1}$-reducible to $y$} ($x\leq_{\Delta_{2}^{1}}y$) if $x$ is $\Delta_{2}^{1}$ in $y$. 
\item 
$x\equiv_{\Delta_{2}^{1}}y$ if $x\leq_{\Delta_{2}^{1}}y$ and $y\leq_{\Delta_{2}^{1}}x$. 
%then we write $x\equiv_{\Delta_{2}^{1}}y$. 
%It is easy to see that $\equiv_{\Delta_{2}^{1}}$ is an equivalence relation. 
\item 
The equivalence class $[x]_{\Delta_{2}^{1}}$ for $\Delta_{2}^{1}$-equivalence is called the \emph{$\Delta_{2}^{1}$-degree of $x$}.
\item 
If  $d_{1}=[x_{1}]_{\Delta_{2}^{1}}$ and $d_{2}=[x_{2}]_{\Delta_{2}^{1}}$ are $\Delta_{2}^{1}$-degrees with $x_{1}\leq_{\Delta_{2}^{1}}x_{2}$,
then we write $d_{1}\leq_{\Delta_{2}^{1}}d_{2}$. 
\item 
$x$ (a set $A$ of reals) is $\Delta^1_2(y,\alpha)$ ($\Sigma^1_2(y,\alpha)$) is there is a $\Delta^1_2$-definition ($\Sigma^1_2$-definition) in real parameters $y,z$ which defines $x$ (the set $A$) for all codes $z$ of $\alpha$. 
\item $x\leq_{\text{OTM}}y$ if $x$ is OTM-computable from $y$ without ordinal parameters. 
%\item 
%$x$ is $\Delta_1^{L[x.y]}(y,\alpha)$ if $x$ is the unique real $z$ which satisfies a given $\Delta_1(y,\alpha)$ definition in $L[x,z]$. 
\end{enumerate-(i)} 
\end{defini} 

A part of the following lemma is known, but we could not find a proof in the literature. 
The equivalence of the first two conditions in the following lemma without parameters was independently proved in \cite{Da09}. 

\begin{lemma} \label{characterization of recognizable from alpha} 
Suppose that $x,y$ are subsets of $\omega$ and $\alpha$ is a countable ordinal. 
The following conditions are equivalent. 
\begin{enumerate-(a)} 
\item 
$x$ is computable from $y$ and $\alpha$.
\item 
$x$ is recognizable from $y$ and $\alpha$. 
\item 
$x\in \CC^\alpha_\omega(y)$. 
\item 
$x$ is $\Delta_1(y,\alpha)$. 
\item 
$x$ is $\Delta^1_2(y,\alpha)$. 
\item 
$\{x\}$ is $\Sigma^1_2(y,\alpha)$. 
\item 
$x\in L_{\sigma(y,\alpha)}[y]$. 
%\item 
%In any generic extension in which $\alpha$ is countable, $x$ is $\Delta^1_2(y,\alpha)$, i.e. there are $\Sigma^1_2$ and $\Pi^1_2$ definitions which independent of the code for $\alpha$. 
\end{enumerate-(a)} 
\end{lemma}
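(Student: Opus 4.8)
The plan is to prove a cycle of implications that pins all seven conditions to membership in the stable level $L_{\sigma(y,\alpha)}[y]$, with (g) as the hub. Several links are routine. For (a)$\Rightarrow$(b), a program computing $x$ from $y,\alpha$ is run on each input $z\subseteq\omega$, comparing the computed $x$ with $z$ bitwise and halting with $\delta_{xz}$; so computability trivially gives recognizability. For (a)$\Leftrightarrow$(d): if $P^{y}(\alpha)$ decides $n\in x$, then both ``$P^{y}(\alpha)$ halts at $n$ with output $1$'' and the analogous statement with output $0$ are $\Sigma_1$ over $L[y]$ in the parameters $y,\alpha$ (computations are absolute and OTM-computability is constructibility by Theorem \ref{relOTM}), so $x$ is $\Delta_1(y,\alpha)$; conversely, given $\Sigma_1$ definitions over $L[y]$ of $n\in x$ and of $n\notin x$, an OTM enumerating $L[y]$ with oracle $y$ (the relativization of Theorem \ref{relOTM}(2)) searches for whichever witness appears and thereby decides $x$ from $y,\alpha$. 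The implication (d)$\Rightarrow$(e) uses that a $\Sigma_1$ assertion about $L[y]$ with a real parameter coding $\alpha$ is $\Sigma^1_2$ (relativized Shoenfield), so $\Delta_1(y,\alpha)$ yields $\Delta^1_2(y,\alpha)$; and (e)$\Leftrightarrow$(f) is the standard interchange between a $\Delta^1_2$ real and a $\Sigma^1_2$ singleton, reading $n\in x$ off a $\Sigma^1_2(y,\alpha)$ definition of $\{x\}$ as both $\exists w\,(w=x\wedge n\in w)$ and $\forall w\,(w=x\to n\in w)$. Recognizability (b) enters via (b)$\Rightarrow$(f), which follows directly from Lemma \ref{2.3}: recognizability from $y,\alpha$ makes $x$ the unique $z$ with $L[y,z]\models\varphi(y,z,\alpha)$ for some $\Sigma_1$ formula $\varphi$, and this unique-solution condition is $\Sigma^1_2(y,\alpha)$.

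The mathematical core, and the step I expect to be the main obstacle, is (f)$\Rightarrow$(g): a $\Sigma^1_2(y,\alpha)$ singleton $x$ must lie in $L_{\sigma(y,\alpha)}[y]$. This is the parametrized, relativized refinement of the classical fact that $\Sigma^1_2$ singletons are constructible and appear below the least $\Sigma_1$-stable ordinal. I would run the Shoenfield tree argument relative to $y$ and to a code of $\alpha$: the $\Sigma^1_2(y,\alpha)$ definition gives a tree $T$ whose projection is $\{x\}$ and which is definable over $L[y]$ from $\alpha$, and the leftmost branch of $T$, which computes $x$, is then $\Sigma_1$-definable over $L[y]$ from $y,\alpha$; by $\Sigma_1$-stability of $\sigma(y,\alpha)$ in $y$ this leftmost branch is already produced by stage $\sigma(y,\alpha)$, placing $x\in L_{\sigma(y,\alpha)}[y]$. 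The delicate points are that the definition is required to hold for \emph{all} codes $z$ of $\alpha$, so that the ordinal $\alpha$ itself (not merely some code) enters the $L[y]$-definition, and that it is the reflection of the relevant $\Sigma_1$ fact to the least stable-in-$y$ level above $\alpha$ that bounds the stage at which $x$ appears.

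For the return (g)$\Rightarrow$(a) I would show that $L_{\sigma(y,\alpha)}[y]$ is the $\Sigma_1$-Skolem hull of $\alpha\cup\{y\}$: collapsing that hull yields a level that is $\Sigma_1$-stable in $y$ and of height at least $\alpha$, which by minimality of $\sigma(y,\alpha)$ must be $L_{\sigma(y,\alpha)}[y]$ itself (the only case needing separate treatment is when the collapse has height exactly $\alpha$). Hence every $x\in L_{\sigma(y,\alpha)}[y]$ is $\Sigma_1$-definable over this level from $y$ and finitely many ordinals below $\alpha$, so it is $\Delta_1(y,\alpha)$ and, by the already-established (d)$\Rightarrow$(a), computable from $y,\alpha$, closing the cycle through (a),(b),(d),(e),(f),(g). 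It then remains to fold in (c). Here (a)$\Rightarrow$(c) is immediate, since a computable-hence-recognizable $x$ lies in $\CC^\alpha_\omega(y)$ via the one-step chain $\langle x,y\rangle$. For (c)$\Rightarrow$(g) I would invoke the relativized two-step reduction behind Lemma \ref{twostep}: $x\in\CC^\alpha_\omega(y)$ yields a single $w\subseteq\omega$ that is recognizable from $y$ with parameter $\alpha$ and from which $x$ is computable with parameter $\alpha$. The first clause gives $w\in L_{\sigma(y,\alpha)}[y]$ by the cycle, and then a stability-transfer step, namely that for such $w$ the level $L_{\sigma(y,\alpha)}[y]$ remains $\Sigma_1$-stable in $w$ above $\alpha$ so that $\sigma(w,\alpha)\le\sigma(y,\alpha)$, places the computable-from-$w$ real $x$ back inside $L_{\sigma(y,\alpha)}[y]$. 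This transfer of $\Sigma_1$-stability between $L[w]$ and $L[y]$ for inner reals $w$ is the one remaining point requiring care.
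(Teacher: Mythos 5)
Your proposal is correct, but it is organized quite differently from the paper's proof, which runs a single linear cycle (a)$\Rightarrow$(b)$\Rightarrow$(c)$\Rightarrow$(d)$\Rightarrow$(e)$\Rightarrow$(f)$\Rightarrow$(g)$\Rightarrow$(a), threading recognizability through the chain via the join construction of Lemma \ref{twostep} to pass from (c) to (d). You instead prove (a)$\Leftrightarrow$(d) directly, hang (b) off the cycle through Lemma \ref{2.3} into (f), and fold (c) in at the end; this is a legitimate reorganization, though your final ``stability transfer'' $\sigma(w,\alpha)\le\sigma(y,\alpha)$ is more machinery than needed --- since the join $w$ from Lemma \ref{twostep} is itself recognizable from $y$ with parameter $\alpha$, condition (c) reduces to (b), and $x\le_T w\in L_{\sigma(y,\alpha)}[y]$ already gives $x\in L_{\sigma(y,\alpha)}[y]$ because that level is admissible. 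The substantive divergence is in (f)$\Rightarrow$(g): the paper collapses $\alpha$ generically, uses Shoenfield absoluteness to show the uniqueness of $x$ persists to $V[G]$, finds the witness in $L[G]$, intersects two mutually generic collapse extensions to land $x$ in $L[y]$, and only then invokes the $\Sigma^1_2$-to-$\Sigma_1$ translation of \cite[Lemma 25.25]{MR1940513} to reflect below $\sigma(y,\alpha)$; you replace the entire forcing argument by the leftmost branch of the Shoenfield tree built with $\alpha$ as an ordinal parameter, reflected by $\Sigma_1$-stability. These are two standard proofs of the same fact; yours avoids forcing altogether at the cost of having to justify carefully (as you note) that the tree can be formed from the ordinal $\alpha$ rather than a real code --- which is exactly the content of the Jech lemma the paper cites, so nothing is lost. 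Your Skolem-hull argument for (g)$\Rightarrow$(a) is also more explicit than the paper's one-line appeal to a least level satisfying a $\Sigma_1$ statement, and correctly isolates the degenerate case where the collapsed hull has height $\alpha$.
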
 
\begin{proof} 
The condition (a) implies (b). 
The condition (b) implies (c) by the proof of Lemma \ref{twostep}. 
This also shows that (c) implies (d). 
%It follows from the proof of Lemma \ref{twostep} that (c) implies (d). 
It follows from the proof of \cite[Lemma 25.25]{MR1940513} that (d) implies (e). 
It is easy to see that (e) implies (f). 

Suppose that (f) holds. %Then $x$ is a $\Delta^1_2(y,\alpha)$-definable subset of $\omega$. 
%Hence $\{x\}$ is a $\Sigma^1_2(y,\alpha)$ set. 
Suppose that $\{x\}$ is defined by a $\Sigma^1_2$-formula $\varphi(x,y,u)$, where $u$ is an arbitrary code for $\alpha$. 
Suppose that $G$ is $\mathrm{Col}(\omega,\alpha)$-generic over $V$ and that $u$ is a relation on $\omega$ in $L[G]$ which is isomorphic to $\alpha$. 
Let $\psi(z,y,v)$ denote the statement that there is a relation $v$ on $\omega$ such that $v$ is isomorphic to $u$ and $\varphi(z,y,v)$ holds. 

\begin{claim*} 
In $V[G]$ the real $x$ is the unique real $z$ with $\psi(z,y,u)$. 
\end{claim*} 
\begin{proof} 
Let $\chi(y,v)$ denote the statement that there is a real $z\neq x$ and a relation $w$ on $\omega$ such that $w$ is isomorphic to $v$ and $\varphi(z,y,w)$ holds. 
Suppose that $v$ is a code for $\alpha$ in $V$. 
The statement $\chi(y,v)$ 
%existence of a real $z$ with $z\neq x$ and $\chi(z,y,v)$ 
is a $\Sigma^1_2$ statement in $y,v$ which is false in $V$ 
%This statement is false in $V$ %for any code $v$ of $\alpha$ 
by the uniqueness of $x$. Hence $\chi(y,v)$ is false in $V[G]$ %for any code $v$ of $\alpha$ 
by Shoenfield absoluteness. 
Hence $x$ is the unique real $z$ with $\psi(z,y,v)$ in $V[G]$. Since the truth value of $\psi(z,y,v)$ is equal to that of $\psi(z,y,u)$ if $v$ is isomorphic to $u$, $x$ is the unique real $z$ with $\psi(z,y,u)$ in $V[G]$. 
%Moreover $\psi(z,y,u)$ and $\psi(z,y,v)$ are equivalent in $V[G]$. 
%This implies the claim. 
\end{proof} 

Since $u\in L[G]$, there is a real $z$ in $L[G]$ with $\psi(z,y,u)$ by Shoenfield absoluteness. Then $z=x$ by the previous claim. 
Suppose that $H$ is $\mathrm{Col}(\omega,\alpha)$-generic over $V[G]$. Then $x\in L[G]\cap L[H]=L$. 
The $\Sigma^1_2$-statement in $x,y,\alpha$ given by (f) is equivalent to a $\Sigma_1$-statement $\theta(x,y,\alpha)$ by the proof of \cite[Lemma 25.25]{MR1940513}. 
Suppose that $\beta>\alpha$ is least such that there is a real $z$ in $L_\beta$ such that $\theta(z,y,\alpha)$ holds in $L_\beta$. 
Then $\beta<\sigma(y,\alpha)$ and hence $x\in L_{\sigma(y,\alpha)}[y]$. 
This implies (g). 

If (g) holds, then %$x$ is an element of $L_\beta$ such that for some
there is a $\Sigma_1$-formula $\theta(z,y,\alpha)$ such that $x\in L_\beta$ and $\beta>\alpha$ is least such that $\theta(x,y,\alpha)$ holds in $L_\beta$. 
Hence $x$ is computable from $y,\alpha$. This implies (a). 
\end{proof} 

We will see in the next section that recognizability from $\omega_1$ does not imply constructibility if $0^{\#}$ exists. 

The equivalence of the conditions (d) and (f) shows that OTM-reducibility coincides with $\Delta^1_2$-reducibility. 
Thus computability and recognizability generalize $\Delta_{2}^{1}$-reducibility in the following ways. 
Firstly, we allow arbitrary ordinal parameters instead of only countable ordinals, as in the previous lemma. 
Secondly, we can consider arbitrary sets of ordinals. 

The previous lemma can be generalized to uncountable ordinals as follows. 

\begin{lemma} 
Suppose that $x,y$ are subsets of $\omega$ and $\alpha$ is an ordinal. 
The following conditions are equivalent. 
\begin{enumerate-(a)} 
\item 
$x$ is computable from $y$ and $\alpha$.
\item 
$x$ is recognizable from $y$ and $\alpha$ in any generic extension in which $\alpha$ is countable. 
\item 
$x\in \CC^\alpha_\omega(y)$ in any $\mathrm{Col}(\omega,\alpha)$-generic extension (in any generic extension in which $\alpha$ is countable). 
\item 
$x$ is $\Delta_1(y,\alpha)$ in any $\mathrm{Col}(\omega,\alpha)$-generic extension (in any generic extension in which $\alpha$ is countable). 
\item 
$x$ is $\Delta^1_2(y,\alpha)$ in any $\mathrm{Col}(\omega,\alpha)$-generic extension (in any generic extension in which $\alpha$ is countable). 
\item 
$\{x\}$ is $\Sigma^1_2(y,\alpha)$ in any $\mathrm{Col}(\omega,\alpha)$-generic extension (in any generic extension in which $\alpha$ is countable). 
\item 
$x\in L_{\sigma(y,\alpha)}[y]$. 
%\item 
%In any generic extension in which $\alpha$ is countable, $x$ is $\Delta^1_2(y,\alpha)$, i.e. there are $\Sigma^1_2$ and $\Pi^1_2$ definitions which independent of the code for $\alpha$. 
\end{enumerate-(a)} 
\end{lemma}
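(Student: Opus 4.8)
The plan is to reduce everything to the countable case already settled in Lemma \ref{characterization of recognizable from alpha}, by collapsing $\alpha$ to be countable and exploiting the absoluteness of the two ``endpoint'' conditions (a) and (g).

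First I would observe that conditions (a) and (g) are absolute between $V$ and any generic extension $V[G]$. For (a), the halting behaviour and output of an OTM-computation $P^{y}(\alpha)$ is absolute between transitive models of $\ZFC$, and the existence of a suitable program $P$ is a statement about natural numbers; hence ``$x$ is computable from $y$ and $\alpha$'' has the same truth value in $V$ and in $V[G]$. For (g), forcing adds no new sets to $L[y]$, so $L[y]^{V[G]}=L[y]^{V}$; consequently the least $\Sigma_1$-stable-in-$y$ ordinal above $\alpha$, namely $\sigma(y,\alpha)$, is computed identically, and the membership statement $x\in L_{\sigma(y,\alpha)}[y]$ is absolute. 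Note also that $x$ and $y$, being subsets of $\omega$, are unaffected by the collapse.

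Next I would fix a $\mathrm{Col}(\omega,\alpha)$-generic filter $G$ over $V$, so that $\alpha$ is countable in $V[G]$. Working inside $V[G]$, Lemma \ref{characterization of recognizable from alpha} applies to $x,y,\alpha$ and shows that the seven conditions (a)--(g) \emph{as interpreted in $V[G]$} are pairwise equivalent. Combined with the absoluteness above, this yields the equivalence (a)$\Leftrightarrow$(g) of the present lemma: (a) holds in $V$ iff it holds in $V[G]$ iff (g) holds in $V[G]$ iff (g) holds in $V$. For the remaining conditions (b)--(f), recall that each is phrased as the assertion that the corresponding clause of Lemma \ref{characterization of recognizable from alpha} holds \emph{in every} generic extension in which $\alpha$ is countable. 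Given any such extension $V[G]$, Lemma \ref{characterization of recognizable from alpha} (applied in $V[G]$) shows that each of (b)--(f) in $V[G]$ is equivalent to (a) in $V[G]$, which by absoluteness equals (a) in $V$. Since the truth value of (a) in $V$ does not depend on $G$, each clause holds in all such extensions or in none; hence each of (b)--(f) of the present lemma is equivalent to (a), closing the cycle. Finally, the parenthetical formulations coincide with the $\mathrm{Col}(\omega,\alpha)$-versions precisely because the anchoring conditions (a) and (g) are absolute and Lemma \ref{characterization of recognizable from alpha} is available in \emph{any} model in which $\alpha$ is countable, independently of the poset used to collapse it.

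The main obstacle I expect is the careful bookkeeping of the universal quantifier ``in any generic extension'': one must verify that it collapses to a single absolute statement, which hinges on pinning conditions (b)--(f) between the two absolute endpoints (a) and (g). A secondary point requiring care is the absoluteness of (g), that is, that $\sigma(y,\alpha)$ is genuinely unchanged by forcing because it is defined by $\Sigma_1$-stability internal to $L[y]$, which forcing does not affect.
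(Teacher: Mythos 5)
Your proposal is correct and follows essentially the same route as the paper, which simply reduces the lemma to Lemma \ref{characterization of recognizable from alpha} applied in a generic extension where $\alpha$ is countable, together with the absoluteness of the anchoring conditions. Your write-up merely makes explicit the absoluteness of (a) and (g) and the bookkeeping for the universally quantified clauses (b)--(f), which the paper leaves implicit.
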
 
\begin{proof} 
This follows from Lemma \ref{characterization of recognizable from alpha} and the absoluteness of the conditions (c) and (d). 
\end{proof} 

%\begin{definition}
% Write `$c \leq_{\ensuremath{\operatorname{OTM}}} d$' to mean that $c$ is
%  $\ensuremath{\operatorname{OTM}}$-computable from $d$ without using ordinal
%  parameters.
%\end{definition}

The previous lemmas can be improved in $L$. 

\begin{lemma} 
Suppose that $\omega_1=\omega_1^L$ and $x,y\subseteq\omega$. 
Then $x\leq_{\text{OTM}} y$ holds if and only if $\sigma(x)\leq\sigma(y)$. 
\end{lemma}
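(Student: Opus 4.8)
The plan is to reduce the whole statement to the characterisation of OTM-reducibility already established. Applying Lemma~\ref{characterization of recognizable from alpha} with the ordinal parameter $0$ gives, for reals $x,y$, the equivalence that $x\leq_{\mathrm{OTM}}y$ holds iff $x\in L_{\sigma(y)}[y]$, where $\sigma(y)$ is the least $\Sigma_1$-stable-in-$y$ ordinal. Thus it suffices to prove, under $\omega_1=\omega_1^L$, that $x\in L_{\sigma(y)}[y]$ holds exactly when $\sigma(x)\leq\sigma(y)$. I would treat the two directions separately; the forward direction is soft and uses no hypothesis, while $\omega_1=\omega_1^L$ enters only in the converse.

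For $x\leq_{\mathrm{OTM}}y\Rightarrow\sigma(x)\leq\sigma(y)$, assume $x\in L_{\sigma(y)}[y]$. Since $\sigma(y)$ is $\Sigma_1$-stable in $y$ it is admissible, hence closed under ordinal addition, so $L_\theta[x]\in L_{\sigma(y)}[y]$ for every $\theta<\sigma(y)$ and $L_{\sigma(y)}[x]\subseteq L_{\sigma(y)}[y]$. I would then show $L_{\sigma(y)}[x]\prec_{\Sigma_1}L[x]$, which forces $\sigma(x)\leq\sigma(y)$ by minimality. The key point is that, since $x\in L[y]$, the relativised hierarchy $\langle L_\theta[x]\mid\theta\in\mathrm{Ord}\rangle$ is uniformly $\Sigma_1$-definable over $L[y]$ from the parameter $x$. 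Hence for a $\Sigma_1$ formula $\varphi$ and a parameter $a\in L_{\sigma(y)}[x]$, the assertion that $L[x]\models\exists v\,\varphi(v,a)$ translates into a $\Sigma_1$ assertion over $L[y]$ with parameters $x,a\in L_{\sigma(y)}[y]$; by $\Sigma_1$-stability of $\sigma(y)$ in $y$ it reflects into $L_{\sigma(y)}[y]$, producing some $\theta<\sigma(y)$ with a witness already in $L_\theta[x]\subseteq L_{\sigma(y)}[x]$, and upward $\Sigma_1$-absoluteness places the witness in $L_{\sigma(y)}[x]$.

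For the converse $\sigma(x)\leq\sigma(y)\Rightarrow x\leq_{\mathrm{OTM}}y$ I would invoke $\omega_1=\omega_1^L$ to work in the constructible setting, i.e. with $L[x]=L[y]=L$. There $\sigma(z)$ is simply the least $\Sigma_1$-stable ordinal $\tau$ with $z\in L_\tau$: for $\tau$ below the $L$-level of $z$ the $\Sigma_1(z)$ statement \emph{$z$ is constructible} fails in $L_\tau[z]$ while it holds in $L$, so no such $\tau$ is stable-in-$z$, and for larger stable $\tau$ one has $L_\tau[z]=L_\tau$. Granting this, $\sigma(x)\leq\sigma(y)$ gives $x\in L_{\sigma(x)}\subseteq L_{\sigma(y)}=L_{\sigma(y)}[y]$, that is, $x\leq_{\mathrm{OTM}}y$. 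I expect this reduction to be the main obstacle, and the step where the hypothesis is indispensable: one must extract from $\omega_1=\omega_1^L$ that the reals being compared behave constructibly and that $\sigma(x),\sigma(y)$ are computed correctly as countable, $\Sigma_1$-stable ordinals. Constructibility really is needed here, since for genuinely non-constructible reals the $\sigma$-values can agree while the reals are OTM-incomparable (for instance two mutually Cohen-generic reals over $L$, where a homogeneity argument yields $\sigma(x)=\sigma(y)$ although neither lies in $L[\text{other}]$); so the converse rests essentially on the hypothesis rather than on the soft absoluteness used above.
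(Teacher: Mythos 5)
Your forward direction is fine and in fact supplies the detail that the paper's one-line argument omits: from $x\in L_{\sigma(y)}[y]$ you show $L_{\sigma(y)}[x]\prec_{\Sigma_1}L[x]$ by rewriting $\Sigma_1$ facts about $L[x]$ as $\Sigma_1$ facts about $L[y]$ and reflecting into $L_{\sigma(y)}[y]$; this is correct, needs no hypothesis, and matches the intent of the paper's proof.

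The converse contains a genuine gap at the single step you lean on: ``invoke $\omega_1=\omega_1^L$ to work in the constructible setting, i.e.\ with $L[x]=L[y]=L$.'' This is a non sequitur: $\omega_1=\omega_1^L$ does not imply that $x$ and $y$ are constructible (add one Cohen real to $L$; $\omega_1$ is preserved but the new real is not in $L$). Everything you do after that reduction --- identifying $\sigma(z)$, for constructible $z$, with the least $\Sigma_1$-stable $\tau$ such that $z\in L_\tau$, and concluding $x\in L_{\sigma(x)}\subseteq L_{\sigma(y)}=L_{\sigma(y)}[y]$ --- is correct, but only under the unproved extra assumption $x,y\in L$. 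Worse, your own closing remark shows the gap cannot be repaired from the stated hypothesis: a pair of mutually Cohen-generic reals over $L$ (or a single Cohen real paired with $0$) can exist in a model with $\omega_1=\omega_1^L$, and for such reals the reflection-through-the-forcing-relation argument you allude to gives $\sigma(x)=\sigma(y)=\sigma$ while $x\notin L[y]$; so the implication $\sigma(x)\leq\sigma(y)\Rightarrow x\leq_{\mathrm{OTM}}y$ genuinely requires $x,y\in L$ and not merely $\omega_1=\omega_1^L$. To be fair, the paper's own proof makes the same tacit assumption --- its converse begins ``suppose $x$ is the $\alpha$-th element of $L_{\sigma(y)}$,'' which presupposes $x\in L$, and the sentence introducing the lemma (``The previous lemmas can be improved in $L$'') indicates that the intended ambient hypothesis is $V=L$. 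Under that reading your argument goes through (and is essentially the paper's); under the hypothesis as literally stated, neither your proof nor the paper's closes the converse, and your Cohen-real observation is in effect a counterexample rather than a supporting remark.
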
 
\begin{proof} 
Suppose that $x\leq_{\text{OTM}}y$. Then $x\in L_{\sigma(y)}[y]$ by Lemma \ref{characterization of recognizable from alpha}. 
Since $x$ is $\Sigma_1$-definable in $L_{\sigma(y)}[y]$, it follows that $\sigma(x)\leq\sigma(y)$. 

For the other implication, suppose that $\sigma ( x ) \leq \sigma ( y )$. 
Suppose that $x$ is the $\alpha$-th element of $L_{\sigma(y)}$. 
Since $\omega_1=\omega_1^L$, the ordinal $\alpha$ has a code which is computable from $y$ without ordinal parameters, as in the proof of Lemma \ref{wrecclos}. 
Hence $x$ is computable from $y$. 
%Every ordinal
%$\delta < \sigma ( y )$ has a code 
%OTM-computable from $d$ without ordinal
%parameters. But for some $\delta$, $c$ is the $\delta$'th real in $L_{\sigma (
%d )}$. Hence $c \leq_{\ensuremath{\operatorname{OTM}}} d$.
\end{proof}

\begin{lemma}
Suppose that $V=L$ and $x,y\subseteq\omega$. 
If $x$ is recognizable from $y$ and $\alpha$, then $x$ is computable from $y$ and $\alpha$. 
%be recognizable by some OTM-program in the ordinal parameter $\alpha$. Then $x$ is also computable by some OTM-program in the parameter $\alpha$. 
\end{lemma}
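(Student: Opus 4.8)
The plan is to exploit the fact that under $V=L$ every real is constructible from $y$, so that the object $x$ to be computed actually occurs in an OTM-enumeration of $L[y]$; the recognizer can then serve as a stopping criterion for a search through this enumeration, turning recognizability into a terminating search.

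First I would unwind the hypothesis. By Definition \ref{relrec} there is an OTM-program $P$ with oracle $y$ and ordinal parameter $\alpha$ such that for every $z\subseteq\omega$ the computation $P^{y\oplus z}(\alpha)$ halts with $\delta_{xz}$ in the first cell; in particular $P$ halts on every real input, outputs $1$ exactly on $z=x$, and outputs $0$ on every other real. The essential use of the hypothesis $V=L$ is now simply that $x\in L=L[y]$. This is exactly what fails for genuine lost melodies such as $0^{\#}$ in a larger universe: there the recognized real lies outside $L[y]$, and the search described below never terminates.

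Next I would build the computing program $Q$, with oracle $y$ and the same parameter $\alpha$. By Theorem \ref{relOTM}(2) there is a parameter-free program $E$ such that $E^{y}$ enumerates $L[y]$, writing the characteristic function of each set of ordinals in $L[y]$ on the tape at some stage of its non-halting run. I would dovetail $E^{y}$ with tests by $P$: whenever $E^{y}$ has just written a new set on the tape, I read off its restriction $z$ to the first $\omega$ cells, run $P^{y\oplus z}(\alpha)$ to completion (which halts, since $P$ halts on all reals), and if the output is $1$ I stop, having found $z=x$; otherwise I resume the simulation of $E^{y}$. Feeding only the $\omega$-restriction avoids any unbounded test for whether an enumerated set is a real and introduces no false positives, because by the recognizing property $P$ outputs $1$ on a real only for $x$.

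Finally I would verify correctness and termination. Correctness is immediate: $Q$ halts only after some call to $P$ returns $1$, and the unique real triggering this is $x$, which $Q$ then has on its tape and can use to answer membership queries. Termination follows from the key point: since $x\in L[y]$, the set $x$ is written by $E^{y}$ at some stage, and at that stage its $\omega$-restriction is $x$ itself, so the corresponding call to $P$ returns $1$ and $Q$ halts. As $Q$ uses only the oracle $y$ and the ordinal parameter $\alpha$ (the enumerator $E$ being parameter-free), this shows that $x$ is computable from $y$ and $\alpha$, as desired. The only step requiring care is the bookkeeping of the dovetailing, namely interleaving the non-halting enumeration with the individual halting recognizer calls, but this is routine for OTMs; the whole mathematical content lies in the observation that $V=L$ forces the recognized real into the enumerated inner model.
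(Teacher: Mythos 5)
Your proposal is correct and follows essentially the same route as the paper's own proof: enumerate $L[y]$ via Theorem \ref{relOTM}(2), test each real that appears with the recognizer (which halts on all inputs), and stop when it accepts, with $V=L$ guaranteeing that $x$ appears in the enumeration. The paper's version is just a terser statement of exactly this search-with-stopping-criterion argument.
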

\begin{proof}
Suppose that $P$ recognizes $x$ from $y$ and $\alpha$. 
We enumerate $L$ as in Theorem  \ref{enumerateL}. 
Whenever a new real number $z$ appears on the tape, we run $Q^{y,z}(\alpha)$ and return $z$ when $Q^{y,z}(\alpha)=1$. This computes $x$ from $y$ and $\alpha$. 
\end{proof}

Similar to the characterization of the reals in $L_{\sigma}$ in Lemma \ref{characterization of recognizable from alpha}, we ask how to characterize the reals in the transitive collapse of the $\Sigma_2$-hull in $L$. 

\begin{definition} \label{definition of eta} 
Let $\eta$ denote the ordinal such that $L_\eta$ is the transitive collapse of the $\Sigma_2$-hull in $L$. 
\end{definition} 

This is connected with \emph{eventually writable} sets of ordinals. 

%\begin{definition} 
%Let $\eta$ denote the least ordinal such that $L_{\eta}$ and $L$ satisfy the same $\Sigma_2$-sentences. 
%\end{definition} 

\begin{definition} 
Suppose that $x\subseteq\gamma$. 
\begin{enumerate-(i)} 
\item 
$x$ is \emph{writable} if there is an OTM-program $P$ which halts with empty input, and $x$ is written on the initial segment of length $\gamma$ of the tape when the program halts. 
\item 
$x$ is \emph{eventually writable} if there is an OTM-program $P$ with empty input such that from some time onwards, the contents of the initial segment of the tape of length $\gamma$ is $x$. 
\item
$x$ is \emph{accidentally writable} if there is an OTM-program $P$ with empty input such that $x$ is written on the initial segment of length $\gamma$ of the tape at some time. 
\end{enumerate-(i)} 
\end{definition} 

\begin{lemma} \label{properties of the sigma2 hull} 
\begin{enumerate-(1)} 
\item 
Suppose that $z\in L$ and $L\vDash \exists x \forall y \varphi(x,y,z)$, where $\varphi$ is a $\Sigma_0$-formula. 
Then the $<_L$-least $z$ with $L\vDash \exists x \forall y \varphi(x,y,z)$ is $\Sigma_2$-definable over $L$. 
\item \label{characterization of the sigma2 hull} 
The $\Sigma_2$-hull in $L$ with respect to the canonical Skolem functions is equal to 
the set of $\Sigma_2$-definable elements of $L$. 
\item 
$\eta$ is the least ordinal $\gamma$ such that $L_\gamma$ and $L$ have the same $\Sigma_2$-theory. 
\end{enumerate-(1)} 
\end{lemma}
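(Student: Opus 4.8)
The plan is to prove the three parts in order, using reflection to $\Sigma_2$-correct levels of $L$ as the main engine and treating (1) as the key technical tool feeding into (2). Throughout, write $\psi(z)$ for the $\Sigma_2$ formula $\exists x\,\forall y\,\varphi(x,y,z)$ and let $S=\{z:\psi(z)\}$, so that (1) asks for a $\Sigma_2$ definition of $z_0:=\min_{<_L}S$. First I would record the basic absoluteness facts I intend to reuse: $\Sigma_2$-formulas are upward absolute from any level to $L$ and are absolute between $L$ and $L_\gamma$ whenever $\gamma$ is $\Sigma_1$-stable and large enough to contain the relevant witnesses, since $\Pi_1$-statements are absolute between $L$ and $\Sigma_1$-stable $L_\gamma$ by $\Sigma_1$-elementarity. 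I would also invoke the reflection theorem inside $L$ (using $V=L$) to get that the class $C_2=\{\gamma:L_\gamma\prec_{\Sigma_1}L\}$ is closed unbounded, so that arbitrarily tall levels correctly compute $\psi$.

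For part (1) the honest definition $z=z_0\iff\psi(z)\wedge\forall z'<_L z\,\neg\psi(z')$ is syntactically $\Sigma_2\wedge\Pi_2$, so the whole point is to recover a genuine $\Sigma_2$ definition. I would express $z_0$ by asserting the existence of a single level $L_\gamma$ that (i) contains $z$, (ii) satisfies $\psi(z)$, and (iii) satisfies ``$z$ is $<_L$-least with $\psi$'', where crucially the minimality quantifier $\forall z'<_L z$ is \emph{bounded} inside $L_\gamma$ because the $<_L$-predecessors of $z\in L_\gamma$ all lie in $L_\gamma$. The main obstacle is exactly this step: I must guarantee that $L_\gamma$ correctly reflects the failure of $\psi(z')$ for smaller $z'$, i.e.\ rule out \emph{spurious} low-level $\Pi_1$-witnesses that satisfy $L_\gamma\models\forall y\,\varphi(x',y,z')$ without $L\models\forall y\,\varphi$. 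My proposed fix is to restrict the level to a $\Sigma_1$-stable $\gamma$ (where $\Pi_1$ is absolute, so no spurious persistent witnesses survive) and to absorb the $\Pi_2$ cost of ``$\gamma$ is $\Sigma_1$-stable'' by taking $\gamma$ \emph{least} witnessing $\psi(z)$ among such levels and bounding all remaining search below $\gamma$; the localization of $\Sigma_1$-truth then keeps the clause $\Sigma_2$. The conclusion to extract is that $z_0$ is the \emph{unique} solution of a single $\Sigma_2$ formula, which is the sense of ``$\Sigma_2$-definable'' needed downstream.

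For part (2) I would prove the two inclusions separately. For $\supseteq$, if $a$ is the unique solution of a $\Sigma_2$ formula $\Theta$, then applying $\Sigma_2$-elementarity of the hull $H$ to $\exists v\,\Theta(v)$ yields a witness $b\in H$ with $L\vDash\Theta(b)$ (upward $\Sigma_2$-absoluteness), and uniqueness in $L$ forces $b=a$, so $a\in H$. For $\subseteq$, every element of $H$ is a value of a canonical $\Sigma_2$-Skolem function applied to previously constructed elements; invoking that these Skolem functions are themselves $\Sigma_2$ (the least-witness form, established exactly as in part (1)) shows each such value is the unique solution of a $\Sigma_2$ formula, hence $\Sigma_2$-definable. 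Part (1) is what lets me also cover definitions given as ``the $<_L$-least object with a $\Sigma_2$ property'', closing the gap between least-witness and unique-witness definability.

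For part (3) I would use condensation: the transitive collapse of $H$ is some $L_\eta$, and since $\pi\colon H\cong L_\eta$ is an isomorphism and $H\prec_{\Sigma_1}L$ at the $\Sigma_2$ level, $L_\eta$ and $L$ have the same $\Sigma_2$-theory (for sentences, agreement on true $\Sigma_2$-sentences is equivalent to agreement on true $\Pi_2$-sentences, so this is genuine $\Sigma_2$-equivalence). For minimality, suppose $\gamma<\eta$ with $\mathrm{Th}_{\Sigma_2}(L_\gamma)=\mathrm{Th}_{\Sigma_2}(L)$. Since $L_\eta$ is its own $\Sigma_2$-Skolem hull of $\emptyset$, every ordinal below $\eta$ is $\Sigma_2$-definable in $L_\eta$ by part (2); each such definition is, via the shared $\Sigma_2$-theory, realized by a corresponding ordinal of $L_\gamma$, and the realization is order-preserving and injective because ``$v$ is an ordinal'', ``$v<v'$'', and the uniqueness clauses $\exists! v\,\Theta(v)$ are all decided by the $\Sigma_2$-theory. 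This embeds $\eta$ order-preservingly into $\gamma$, giving $\eta\le\gamma$, a contradiction. The delicate point here, which I would treat carefully, is verifying that theory-agreement alone transfers the $\Sigma_2$ definitions faithfully and preserves the well-order; this is where the precise bookkeeping of which statements count as $\Sigma_2$ (and their uniqueness) must be pinned down.
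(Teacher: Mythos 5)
Your parts (2) and (3) track the paper's argument (closure of the $\Sigma_2$-definable elements under least-witness Skolem functions, and transfer of the definable well-order of the ordinals via the shared $\Sigma_2$-theory) and are fine \emph{given} part (1). The gap is in part (1), which is where the content sits. Writing $\psi(z)$ for $\exists x\forall y\,\varphi(x,y,z)$, you minimize only $z$ and leave the witness $x$ existentially quantified inside the minimality clause, so that clause reads ``for all $\bar z<_L z$ and all $\bar x$ there is $y$ with $\neg\varphi(\bar x,y,\bar z)$'' --- irreducibly $\Pi_2$, because the candidate witnesses $\bar x$ are unbounded. Relativizing to a $\Sigma_1$-stable level $L_\gamma$ only buys correctness for $\Sigma_1$ and $\Pi_1$ facts about parameters \emph{in} $L_\gamma$: it kills spurious witnesses $\bar x\in L_\gamma$, but says nothing about genuine witnesses $\bar x\notin L_\gamma$. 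Uniqueness therefore fails: if $z_0<_L z$, $\psi(z_0)$ holds in $L$, but every witness for $\psi(z_0)$ lies outside $L_\gamma$ while a witness for $\psi(z)$ lies inside, then $L_\gamma\models\neg\psi(z_0)$ and your formula is satisfied by $z$ as well; $\Sigma_1$-stability does not force $L_\gamma$ to reflect the $\Sigma_2$ fact $\psi(z_0)$. (Separately, ``take the least $\Sigma_1$-stable $\gamma$'' does not by itself keep you in $\Sigma_2$: minimizing a property that is prima facie $\Pi_2$ lands you in $\Sigma_3$ unless you first argue that $\Sigma_1$-stability is $\Pi_1$-expressible via a universal $\Sigma_1$ formula, which you do not do.)

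The missing device, which is the paper's, is to minimize the \emph{pair} $(z,x)$ lexicographically rather than $z$ alone. Then the positive clause $\forall y\,\varphi(x,y,z)$ is honestly $\Pi_1$ (expressed by requiring it in every level $L_\alpha$ containing $z,x$), while the minimality clause now quantifies only over the \emph{set} of lex-predecessors of $(z,x)$ and asserts for each the upward-absolute $\Sigma_1$ fact $\exists y\,\neg\varphi(\bar x,y,\bar z)$; collecting the refuting $y$'s into a single level turns the entire minimality clause into one $\Sigma_1$ assertion that is automatically correct, with no stability hypothesis needed. The resulting $\exists x\,(\Pi_1\wedge\Sigma_1)$ shape is $\Sigma_2$, and the first coordinate of the lex-least pair is exactly the $<_L$-least $z$ with $\psi(z)$. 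Without some such bounding of the witness quantifier, the clause ``no $\bar z<_L z$ has any witness'' cannot be localized to a level, and your construction does not yield a unique solution.
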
 
\begin{proof} 
The $\Sigma_2$-definition of $z$ in $L$ states that there is some $x$ such that for all $\alpha$ with $z,x\in L_\alpha$, 
$L_\alpha\vDash \forall y \varphi(x,y,z)$ and there are no $\bar{z},\bar{x}\in L$ with $(\bar{z},\bar{x})<_{\mathrm{lex}}(z,x)$ such that $L_\alpha\vDash \forall y \varphi(\bar{x},y,\bar{z})$. 
Here $<_{\mathrm{lex}}$ is the lexicographical order with respect to $<_{L}$. 

For the second claim, note that the set of $\Sigma_2$-definable elements of $L$ is contained in the $\Sigma_2$-Skolem hull in $L$. 
For the other inclusion, we prove the following claim. 

\begin{claim*} 
Suppose that $\varphi$ is a $\Sigma_0$-formula and $a$ is $\Sigma_2$-definable over $L$. 
Suppose that $L\vDash \exists x \exists y\forall z \varphi(x,y,z,a)$. 
Then there is a $\Sigma_2$-definable element $x$ of $L$ with $L\vDash \exists y\forall z \varphi(x,y,z,a)$. 
\end{claim*} 
\begin{proof} 
%Suppose that $\exists y \forall z \varphi(x,y,z,a)$ is a $\Sigma_2$-formula defining $x$ over $L$ in the parameter $a \in A$. 
Suppose that $\exists b\forall c \psi(a,b,c)$ defines $a$ over $L$. 
Then the $<_L$-least $x$ which satisfies the formula $\exists a,b,y \forall c,z (\varphi(x,y,z,a)\wedge \psi(a,b,c))$ is $\Sigma_2$-definable over $L$ by the first claim. 
\end{proof} 
This implies that every element of the $\Sigma_2$-hull in $L$ is $\Sigma_2$-definable. 

For the third claim, suppose that $\varphi(x,i)$ is a universal $\Sigma_2$-formula. 
We define $i<j$ by $\exists \alpha,\beta\ \varphi(i,\alpha)\wedge \varphi(j,\beta)\wedge \alpha<\beta$. 
Since every element in $L_\eta$ is $\Sigma_2$-definable, this defines a well-order with order type $\eta$ in $L_\eta$. 
Since this formula defines the same well-order in $L_\gamma$ and in $L_\eta$, we have $\gamma=\eta$. 
\end{proof} 

We defined $L_\eta$ in Definition \ref{definition of eta}. 
%as the transitive collapse of the $\Sigma_2$-hull in $L$. 

\begin{lemma} \label{eventually writable} 
The eventually writable subsets of $\omega$ are exactly the subsets of $\omega$ in $L_\eta$. 
\end{lemma}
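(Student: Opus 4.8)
The plan is to prove both inclusions after reducing the statement to a purely definability-theoretic one. By Lemma~\ref{properties of the sigma2 hull}(\ref{characterization of the sigma2 hull}) the $\Sigma_2$-hull of $L$ is exactly the collection of $\Sigma_2$-definable elements of $L$, and since the transitive collapse of this hull is $L_\eta$ and fixes every subset of $\omega$ that it contains, the reals in $L_\eta$ are precisely the reals that are $\Sigma_2$-definable over $L$ without parameters. Thus it suffices to show that a real is eventually writable if and only if it is $\Sigma_2$-definable over $L$.

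For the forward direction I would use that OTM computations are absolute and low in complexity over $L$. More precisely, the predicate ``$C$ is the computation sequence of the fixed parameter-free program $P$ up to time $t$'' is $\Delta_0$ in $C,t$ (the active part of the tape at each stage is a set, and the successor and liminf rules are bounded checks), so ``the content of the first $\omega$ cells at time $t$ is $s$'' is $\Delta_1$ over $L$. If $P$ eventually writes $x$, then $x$ is the unique real $z$ satisfying
\[
\exists\tau\,\bigl(z = c_\tau\!\upharpoonright\!\omega \ \wedge\ \forall t\ge\tau\ c_t\!\upharpoonright\!\omega = c_\tau\!\upharpoonright\!\omega\bigr),
\]
where $c_t$ denotes the tape content at time $t$. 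The stability clause $\forall t\ge\tau\,(c_t\!\upharpoonright\!\omega = c_\tau\!\upharpoonright\!\omega)$ is $\Pi_1$, so the displayed matrix is $\Pi_1$ and the whole defining formula is $\Sigma_2$; uniqueness is clear since the eventual content is single-valued. Hence $x$ is $\Sigma_2$-definable and lies in $L_\eta$.

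For the converse I would build, for a given $\Sigma_2$-definable real $x$, a non-halting program that enumerates $L$ as in Theorem~\ref{enumerateL}(2) and eventually writes $x$. Writing $n\in x\leftrightarrow\exists u\,\forall v\,\varphi(n,u,v)$ and $n\notin x\leftrightarrow\exists u\,\forall v\,\psi(n,u,v)$ with $\varphi,\psi\in\Sigma_0$ (both obtained from the single $\Sigma_2$ definition of $x$, exactly one holding for each $n$), the machine runs, for every $n<\omega$ in parallel, a \emph{refutation search}: it enumerates candidate pairs $(u,b)$ with $b\in\{0,1\}$ in the order of $<_L$, keeps the $<_L$-least pair not yet refuted (where $(u,1)$ is refuted once some $v$ with $\neg\varphi(n,u,v)$ appears, and likewise $(u,0)$ via $\psi$), and writes the bit $b$ of that active pair into cell $n$. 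Since a refutation, once found, is permanent, the set of surviving candidates only shrinks; every candidate $<_L$-below the genuine least witness of the correct bit value is eventually refuted, while the correct witness is never refuted, so each cell $n$ is eventually constant with the correct value. Taking $\tau$ to be the supremum over $n$ of the stabilization stages yields a single time from which the first $\omega$ cells display $x$.

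The main obstacle is the bookkeeping in the converse: one must arrange that the active candidate and the refutation marks are reconstructed correctly through limit stages under the liminf rule, so that only transient mismatches occur and genuine stabilization, simultaneously in all coordinates and by one ordinal time, is achieved. Recording each refutation as a cell that is set to $1$ and never reset makes the marks liminf-stable and lets the active candidate be recomputed at limits. The forward direction's only delicate point is the verification that the computation-sequence predicate is low enough ($\Delta_0$, so that the stability clause is genuinely $\Pi_1$) to keep the defining formula at the $\Sigma_2$ level, which is exactly where absoluteness of OTM computations enters.
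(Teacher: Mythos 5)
Your proof is correct and follows essentially the same route as the paper: both directions are reduced to $\Sigma_2$-definability over $L$ via Lemma~\ref{properties of the sigma2 hull}, and the converse is the same $<_L$-least-unrefuted-witness search along the $L$-hierarchy, which the paper runs on the whole pair $(x,y)$ at once rather than bit-by-bit. Your forward direction and your treatment of liminf-stability at limit stages are in fact more detailed than the paper's, which asserts the forward direction in one line.
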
 
\begin{proof} 
Every eventually writable real is $\Sigma_2$-definable over $L$. 

Suppose that $x\subseteq\omega$ is an element of $L_\eta$. Then $x$ is $\Sigma_2$-definable over $L$ by Claim \ref{characterization of the sigma2 hull} in Lemma \ref{properties of the sigma2 hull}. 
Suppose that $\exists y \forall z \varphi(x,y,z)$ defines $x$ over $L$, where $\varphi$ is a $\Sigma_0$-formula. 
we consider an OTM which %does the following. 
enumerates the ordinals $\alpha$. 
For each $\alpha$, we consider the $<_{L}$-least pair $(x,y)\in L_\alpha$ such that $L_\alpha\vDash \forall z \varphi(x,y,z)$ and write $x$ on the initial segment of the tape. 
This will eventually write $x$. 
%We can build an OTM which enumerates $L$ and does the following. 
%For each pair $(x,y)$, the program checks whether $\varphi(x,y,z)$ holds for all $z$. 
%As long as there is no counterexample, the program outputs $x$ on an initial segment of the tape. 
%This will eventually write $x$. 
%Let $\exists{a}\forall{b}\phi(x,a,b)$ define $x$ where $\phi$ is $\Delta_{0}$. Enumerate $L$. For each pair $(a,x)$, run through $L$ and check whether $\phi(x,a,b)$ holds for all $b$. While this is done,
%keep $x$ as the output. When a counterexample is found, proceed with the $<_{L}$-next $(a,x)$. This will eventually arrive at some pair $(c,x)$ such that $\forall{b}\phi(x,c,b)$, from which point on
%the output $x$ is stable.
\end{proof}

\begin{lemma} 
\begin{enumerate-(1)} 
\item 
The writable reals are exactly the reals in $L_{\sigma}$. 
\item 
The eventually writable reals are exactly the reals in $L_{\eta}$. 
\item 
The accidentally writable reals are exactly the reals in $L$. 
\end{enumerate-(1)} 
\end{lemma}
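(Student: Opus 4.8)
The plan is to prove the three parts separately, and to note at the outset that part (2) requires no new work: the eventually writable subsets of $\omega$ were already identified with the reals of $L_\eta$ in Lemma \ref{eventually writable}, so part (2) is merely a restatement. I therefore concentrate on parts (1) and (3), proving the two inclusions in each.

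For the forward inclusions I would use absoluteness of OTM computations. For part (1), if $x$ is writable via a parameter-free program $P$, then the sentence ``$P$ halts with empty input'' is $\Sigma_1$ and holds in $V$, hence in $L$. Since $\sigma$ is the least $\Sigma_1$-stable ordinal, $L_\sigma\prec_{\Sigma_1}L$, so a halting computation of $P$ already exists in $L_\sigma$; by determinism this is the computation, and its restriction to the first $\omega$ cells, namely $x$, lies in $L_\sigma$. (Equivalently, the halting time is below $\sigma$ and the whole run is an element of the admissible set $L_\sigma$.) For part (3), any run of a parameter-free program, halting or not, is obtained by a $\Sigma_1$-recursion and is constructible at each stage by the absoluteness recorded around Theorem \ref{relOTM}; hence every tape content, in particular an accidentally written real $x$, lies in $L$. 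For the backward inclusion of part (3) I would invoke Theorem \ref{relOTM}(2) directly: with empty oracle the enumerating program writes the characteristic function of every constructible set of ordinals at some time, so in particular every real of $L$ appears on the first $\omega$ cells and is accidentally writable.

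The main work, and the expected obstacle, is the backward inclusion of part (1): every real of $L_\sigma$ is writable. First I would record that the writable ordinals are cofinal in $\sigma$ and downward closed, so that every $\delta<\sigma$ is writable. Cofinality follows from the characterization of $\sigma$ as the supremum of halting times together with the programmability of the search for $\Sigma_1$-fixed ordinals used in Lemma \ref{wrecclos}; downward closure follows because from a code for a well-order of type $\beta$ one computes a code for any smaller type. Given a real $x\in L_\sigma$, it has a $<_L$-index $\xi$ among the constructible reals, and since $x\in L_\gamma$ for some $\gamma<\sigma$ this index satisfies $\xi<\sigma$. The conceptual point that avoids an illegal ordinal parameter is that, although the witnessing program must ``know'' $\xi$, it only needs to hardcode the \emph{natural-number} index of a parameter-free program $e_\xi$ that writes a code for $\xi$; the ordinal $\xi$ is then produced by the machine rather than supplied as a parameter. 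The witnessing writable program runs $e_\xi$ to write $\xi$, then runs the constructible enumeration of Theorem \ref{relOTM}(2) until the $\xi$-th real appears, outputs it, and halts at a stage below $\sigma$.

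I expect the delicate points to be exactly the two facts about the writable ordinals used above: that $\xi<\sigma$ for every $x\in L_\sigma$, and that every $\delta<\sigma$ is genuinely writable by a halting parameter-free program, i.e.\ the downward closure and cofinality in $\sigma$ of the writable ordinals. Once these are in hand, the composition ``write the index, then decode the corresponding constructible real'' turns the global $L$-enumeration into a halting computation producing $x$, completing the equivalence of writability with membership in $L_\sigma$.
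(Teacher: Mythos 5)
Your proof is correct and follows essentially the same route as the paper, whose proof simply cites Lemma \ref{characterization of recognizable from alpha} (whose equivalence of parameter-free computability with membership in $L_{\sigma(0,0)}=L_\sigma$ gives part (1), and whose appeal to the enumeration of Theorem \ref{enumerateL} gives part (3)) together with Lemma \ref{eventually writable} for part (2). Your unpacking of the backward inclusion in (1) --- hardcoding a natural-number index of an auxiliary program rather than an ordinal parameter, then decoding via the constructible enumeration --- is exactly the device already used in the proofs of Lemma \ref{wrecclos} and Lemma \ref{characterization of recognizable from alpha}.
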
 
\begin{proof} 
This follows from Lemma \ref{characterization of recognizable from alpha} and Lemma \ref{eventually writable}. 
\end{proof} 

This is analogous to ITTMs, where the writable, eventually writable and accidentally writable reals exactly correspond to levels of $L$.

\subsection{Different ordinals} 

%\todo[inline]{Das k\"onnte besser in den Abschnitt \"uber Erkennbarkeit aus einzelnen Orakeln! } 

%\begin{question} 
%\begin{enumerate} 
%\item wie stark m\"ussen die annahmen sein, damit $M_1$ heraus kommt? 
%\item was passiert bei schw\"acheren annahmen?
%\item kann man aus $\omega_2$ mehr erkennen als aus $\omega_1$?
%\item gibt es interessante konsequenzen aus RH (z.B. CH)?
%\end{enumerate} 
%\end{question} 

In this section, we compare the recognizability strength of OTMs for different ordinal parameters. A first observation is that the recognizability strength does not depend monotonically on the parameter. 

\begin{lemma}{\label{nonmon}}
There is a real $x$ in $L$ which is recognizable from some ordinal $\alpha$ that is countable in $L$, but not from $\omega_1$. 
%In fact, $x$ can be chosen to be constructible.
\end{lemma}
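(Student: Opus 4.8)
The plan is to argue by cardinality. On one side, only countably many reals can be recognizable from the single ordinal parameter $\omega_1$: there are countably many OTM-programs, and a fixed program $P_e$ recognizes at most one real from the fixed parameter $\omega_1$, namely the unique $z$ (if any) for which $P_e^{z}(\omega_1)$ halts with output $1$. Write $R$ for this countable set. On the other side, I will show that \emph{every} real of $L$ is recognizable from some ordinal countable in $L$, so that there are $\aleph_1^L$ many such reals. The witness will then be any real of $L$ lying outside $R$.

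For the second point I would invoke Lemma \ref{characterization of recognizable from alpha}. If $x\subseteq\omega$ and $x\in L$, then $x\in L_\gamma$ for some $\gamma<\omega_1^L$, and such a $\gamma$ is countable in $V$ since $\gamma<\omega_1^L\le\omega_1^V$. As $\sigma(\gamma)>\gamma$ we have $x\in L_\gamma\subseteq L_{\sigma(\gamma)}$, which is condition (g) of Lemma \ref{characterization of recognizable from alpha} with empty oracle; hence $x$ is recognizable from $\gamma$, and $\gamma$ is countable in $L$ as required. Running $\gamma$ over all countable ordinals of $L$ — for instance, taking for each $\gamma<\omega_1^L$ the $<_L$-least real coding a well-order of type $\gamma$, all of which are pairwise distinct and each recognizable from $\gamma+1$ — shows that at least $\aleph_1^L$ reals of $L$ are recognizable from countable-in-$L$ parameters.

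To finish, when $\omega_1^L=\omega_1$ the two counts are $\aleph_1$ against $\aleph_0$, so $R$ cannot contain all reals of $L$; any $x\in(\mathbb{R}\cap L)\setminus R$ is then recognizable from a countable-in-$L$ ordinal but not from $\omega_1$, which proves the lemma. The cardinality bookkeeping here is routine, so I would keep it brief.

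I expect the genuinely delicate point to be the comparison of the two sides when $\omega_1^L<\omega_1$ (for example if $0^{\#}$ exists), for then $\mathbb{R}\cap L$ is itself countable in $V$ and the crude count no longer separates it from the countable set $R$. In that case one cannot merely count, and the argument must instead show directly that recognition from the single parameter $\omega_1$ cannot pin down an \emph{arbitrary} countable-in-$L$ ordinal index: the information in $\omega_1$ does not determine which countable $\gamma$ a given real of $L$ first appears at, whereas the witnesses produced above are recognized precisely because their defining parameter $\gamma$ is supplied to the machine. Making this obstruction precise is the main obstacle, and it is where I would expect to need either the hypothesis $\omega_1=\omega_1^L$ or a separate non-absoluteness argument.
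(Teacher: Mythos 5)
Your argument is sound in the special case $\omega_1=\omega_1^L$, and you have correctly located the problem: when $\omega_1^L<\omega_1$ (e.g.\ if $0^{\#}$ exists), both $\mathbb{R}\cap L$ and the set $R$ of reals recognizable from $\omega_1$ are countable in $V$, so counting in $V$ cannot separate them. Since the lemma carries no hypothesis on $\omega_1^L$, this is a genuine gap, not a routine detail: a priori it could happen that every constructible real lies in $R$, and nothing in your proposal rules this out. The second half of your argument (every constructible real is recognizable from some $\gamma<\omega_1^L$, witnessed by the $<_L$-least codes $c_\gamma$) is fine and matches what the paper does.

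The missing idea is to perform the counting \emph{inside $L$} rather than in $V$. For $\alpha<\omega_1^L$ let $c_\alpha$ be the $L$-least real coding $\alpha$, and define $f\colon\omega\to\omega_1^L$ by letting $f(i)$ be the least $\alpha$ with $P_i^{c_\alpha}(\omega_1)=1$ (and $f(i)=0$ if there is none). If $P_i$ recognizes $c_\alpha$ from $\omega_1$ then $\alpha$ is the unique such ordinal, so $\mathrm{ran}(f)$ contains every $\alpha<\omega_1^L$ for which $c_\alpha$ is recognizable from $\omega_1$. The crucial point is that OTM computations are absolute between transitive models containing the oracle and the parameter, so $f$ is definable over $L$ from the single ordinal parameter $\omega_1^V$ and hence $f\in L$. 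Thus $\mathrm{ran}(f)$ is countable \emph{in $L$}, hence bounded below $\omega_1^L$, and any $\gamma\in\omega_1^L\setminus\mathrm{ran}(f)$ yields a witness: $c_\gamma$ is recognizable from $\gamma$ (compute the $L$-least code of $\gamma$ and compare with the input) but not from $\omega_1$. This is exactly the ``non-absoluteness argument'' you anticipated needing; the diagonalization over programs, carried out constructibly, is what closes the case $\omega_1^L<\omega_1$.
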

\begin{proof} 
%We work in $L$. 
Let $c_{\alpha}$ denote the $L$-least real which codes $\alpha$ for each $\alpha<\omega_1$. 
%For each $\alpha<\omega_{1}^{L}$, there is a $<_{L}$-minimal real $c_{\alpha}\in L$ which codes $\alpha$. 
Let $P_i$ denote the program with index $i$. 
We define $f:\omega\rightarrow\omega_{1}^{L}$ by $f(i)=\alpha$ if $\alpha<\omega_{1}^{L}$ is minimal such that $P_{i}^{c_{\alpha}}(\omega_{1})=1$ and $f(i)=0$ if there is no such $\alpha$. 
Then $\mathrm{ran}(f)$ contains all ordinals $\alpha<\omega_{1}^{L}$ such that $c_\alpha$ is recognizable in the parameter $\omega_{1}$. 
Since $f\in L$, $\mathrm{ran}(f)$ is countable in $L$ 
and there is some $\gamma\in\omega_{1}^{L}\setminus \mathrm{ran}(f)$. 
Then $c_{\gamma}$ is %by definition 
not recognizable in $\omega_{1}$.
But $c_{\gamma}$ can be recognized from $\gamma$ by computing the $L$-least real coding $\gamma$ and comparing this with the input. 
% enumerating $L$ until a real coding $\gamma$ appears and then comparing the $<_{L}$-minimal such real with the input.
\end{proof} 

On the other hand, if $0^{\#}$ exists, then it is recognizable from $\omega_1$ by \cite[Theorem 4.2]{Ca2}, while it is not recognizable from any countable ordinal. 

\begin{lemma}{\label{0sharprecog}}
%Suppose that $0^{\#}$ exists. Then $0^{\#}$ is a lost melody. More generally, for every real number $x$, 
Suppose that $x\subseteq\omega$ and $x^{\#}$ exists. Then $x^{\#}$ is recognizable from $x$. 
%$x^{\sharp}$ is recognizable relative to $x$. 
Moreover $x^{\#}$ is recognizable from $x$ and $\alpha$ if and only if $\omega_1\leq \alpha$.
%If $0^{\#}$ exists, \todo{rewrite proof for $x^{\#}$ instead of $x$, delete Theorem 24 below.} then $0^{\#}$ is a lost melody. Moreover $0^{\#}$ is recognizable from $\alpha$ if and only if $\omega_1\leq \alpha$. 
\end{lemma}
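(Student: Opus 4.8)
The plan is to treat the two directions of the equivalence separately, and to obtain the first assertion as the special case $\alpha=\omega_1$.

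For the necessity of $\omega_1\le\alpha$, I would argue by contradiction using Lemma \ref{characterization of recognizable from alpha}. Suppose $x^{\#}$ were recognizable from $x$ and some countable ordinal $\alpha$. Reading that lemma with the recognized real being $x^{\#}$ and the oracle being $x$, condition (b) would yield condition (g), namely $x^{\#}\in L_{\sigma(x,\alpha)}[x]\subseteq L[x]$. This contradicts the basic fact that $x^{\#}\notin L[x]$ whenever $x^{\#}$ exists. Hence no countable parameter recognizes $x^{\#}$ from $x$, so recognizability from $x$ and $\alpha$ forces $\alpha\ge\omega_1$. This direction is entirely ``soft'': it needs no information about how a recognizing program might operate, only the earlier characterization.

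For the sufficiency I would exhibit a single OTM-procedure $P$ that recognizes $x^{\#}$ from $x$ and \emph{any} parameter $\alpha\ge\omega_1$, thereby also covering the first sentence of the lemma (take $\alpha=\omega_1$) and side-stepping the failure of monotonicity in the parameter (Lemma \ref{nonmon}), since the procedure is uniform over all uncountable $\alpha$. On oracle $z\subseteq\omega$ the program first checks, arithmetically in $z$, that $z$ codes a complete consistent theory $T$ in the language of set theory augmented by a predicate for $x$ and constants $c_n$ ($n<\omega$), extending a suitable base theory, with the Ehrenfeucht--Mostowski indiscernibility property and remarkability; these conditions are decidable, so this stage halts on every $z$. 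It then uses the marked ordinal $\alpha$ as an index set of indiscernibles to build the term model $\mathcal{M}(T,\alpha)$, decides whether $\mathcal{M}(T,\alpha)$ is well-founded (an OTM can decide well-foundedness of a set-sized relation on ordinals), and, if so, forms the transitive collapse and checks against the oracle that this collapse is an initial segment $L_\delta[x]$ of $L[x]$ in which the interpretations of the $c_n$ form a closed unbounded set of ordinals cofinal in $\delta$. The program outputs $1$ exactly when every check succeeds.

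The crux — and the place where $\alpha\ge\omega_1$ is used essentially — is the correctness of the well-foundedness test. Any descending $\in$-chain in a term model $\mathcal{M}(T,\gamma)$ involves only countably many indiscernibles, whose order type is some countable ordinal; by indiscernibility this chain can be copied onto an initial segment of the indiscernibles, so ill-foundedness of $\mathcal{M}(T,\gamma)$ for some $\gamma$ reflects to ill-foundedness of $\mathcal{M}(T,\eta)$ for some countable $\eta$. Consequently $\mathcal{M}(T,\alpha)$ is well-founded if and only if $\mathcal{M}(T,\gamma)$ is well-founded for every $\gamma<\alpha$, and since $\alpha\ge\omega_1$ this is equivalent to well-foundedness at all ordinal index sets at once. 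A single countable index set would test only one countable level and could be passed by a ``pseudo-sharp'' that becomes ill-founded at a larger countable level; the uncountable parameter is exactly what lets one halting computation certify well-foundedness at all levels. Granting this, the standard theory of sharps and its straightforward relativization, following \cite[Theorem 4.2]{Ca2}, shows that a well-founded remarkable EM blueprint whose collapse reproduces $L[x]$ with cofinal indiscernibles must be $x^{\#}$; by uniqueness of $x^{\#}$ precisely the oracle $z=x^{\#}$ passes every test, so $P$ recognizes $x^{\#}$ from $x$ and $\alpha$. The main obstacle to watch is precisely this reflection-of-ill-foundedness argument together with the verification that remarkability plus well-foundedness at the uncountable level pins down $x^{\#}$ uniquely, rather than merely certifying some well-founded EM blueprint.
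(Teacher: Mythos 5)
Your proof is correct. The necessity direction is exactly the paper's argument: both appeal to Lemma \ref{characterization of recognizable from alpha} to conclude that recognizability of $x^{\#}$ from $x$ and a countable $\alpha$ would place $x^{\#}$ inside $L_{\sigma(x,\alpha)}[x]\subseteq L[x]$, which is impossible. For the sufficiency direction the paper takes a shorter, black-boxed route: it cites the fact that the relation $y=x^{\#}$ is $\Pi^1_2$ in $x$ and that such statements are correctly evaluated in $L_\alpha[x,y]$ for $\alpha\ge\omega_1$ by Shoenfield absoluteness, so the recognizing program simply checks the $\Pi^1_2$ definition of $x^{\#}$ inside $L_\alpha[x,y]$. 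Your argument instead unwinds why that relation is $\Pi^1_2$: you implement Silver's Ehrenfeucht--Mostowski blueprint characterization directly, using the uncountable parameter $\alpha$ as the index set of indiscernibles and reducing well-foundedness at all ordinal index sets to well-foundedness at the single level $\alpha$ via the reflection of descending chains to countable sets of indiscernibles. The two proofs rest on the same underlying facts --- Silver's uniqueness theorem for well-founded remarkable blueprints is precisely what makes $y=x^{\#}$ a $\Pi^1_2$ relation --- so yours is essentially an explicit implementation of what the paper obtains by absoluteness. What you gain is a self-contained, uniform description of the recognizing machine (which also makes transparent where $\alpha\ge\omega_1$ is used); the cost is that you must separately verify that each stage (the arithmetic checks on the theory, the term-model construction, the well-foundedness test for a set-sized relation, and the comparison of the collapse with an initial segment of $L[x]$) is OTM-executable and halts on every oracle, all of which are standard and which you do address.
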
 
\begin{proof} 
%$0^{\sharp}$ is not computable since $0^{\sharp}\notin L$, 
%As $0^{\sharp}\notin L$, $0^{\sharp}$ is not pOTM-computable. but $0^{\sharp}$ is OTM-recognizable in the parameter $\omega_{1}$ by \cite[Theorem 4.2]{Ca2}. 
Suppose that $\alpha\geq\omega_1$. 
%We give here a somewhat shorter argument that $0^{\sharp}$ is in fact recognizable in the parameter $\alpha$ for all $\alpha\geq\omega_{1}$: 
The relation $x^{\#}=y$ is $\Pi^1_2$ and hence absolute between $L_\alpha[x,y]$ and $V$ \cite[Theorem 25.20]{MR1940513}. 
Given a real $y$, we check the definition of $x^{\#}$ for $y$ in $L_\alpha[x,y]$. 
This shows that $x^{\#}$ is recognizable from $x$ and $\alpha$.

Since $x^{\#}$ exists and hence $\omega_1$ is inaccessible in $L[x]$, 
Lemma \ref{characterization of recognizable from alpha} shows tha 
$x^{\#}$ is not recognizable from $x$ and any countable ordinal. 
%We now show that $0^{\#}$ cannot be recognized from a countable ordinal. 
%Note that $\omega_1^V$ is inaccessible in $L$ by \cite[Corollary 18.3]{MR1940513}. 
%Suppose that $\gamma<\omega_1^V$ and $P$ recognizes $x$ from $\gamma$. 
%There is a $Col(\omega,\gamma)$-generic filter $G$ over $L$ in $V$, and $\gamma$ has a code $y\in {}^{\omega}2$ in $L[G]$. 
%By $\Sigma_1^{H_{\omega_1}}$ absoluteness (or equivalently Shoenfield absoluteness) between $L[G]$ and $V$, 
%there is a real $x\in L[G]$ which is recognized by $P$ from $\gamma$. This implies that $0^{\#}=z\in L[G]$. 
%But it is well known and easy to prove that $0^{\#}$ is not set generic over $L$. 
%We now consider the general case. Suppose that $x^{\sharp}$ is $\Pi^{1}_{2}$-definable in the parameter $x$ by the $\Pi^{1}_{2}$-formula $\phi(v,x)$. 
%By Shoenfield absoluteness, $\phi(v,x)$ is absolute between 
%transitive models $M$ of $ZF^{-}$ with $x\in M$ and $\omega_{1}^{V}\subseteq M$. Using the oracle $x\oplus y$ and given the parameter $\omega_{1}^{V}$, we can compute the minimal $L_{\beta}[x\oplus y]$ such that $\beta>\omega_{1}^{V}$ and $L_{\beta}[x\oplus y]$, and test whether or not $L_{\beta}[x\oplus y]\models\phi(y,x)$. By absoluteness, this will determine whether $y=x^{\sharp}$. 
\end{proof} 

The previous lemma implies that $x^{\#}$ is recognizable if $x$ is recognizable. 

To show that recognizability from $\omega_\alpha$ is different in $L$ for different countable ordinals $\alpha$ in $L$, we use the following jump operator. 
%There is an analogue to the jump operator for recognizability that we will use, namely the recognizable jump operator. It is studied for other machine models than parameter-OTMs in some detail
Similar jump operators for different machine models are studied in \cite{Ca5}.

\begin{definition} 
Suppose that $\alpha\in \Ord$. 
The recognizable jump for $\alpha$ is the set $J^\alpha\subseteq\omega$ of all codes for programs $P$ such that $P^{\alpha}$ recognizes some $x\subseteq\omega$. 
\end{definition} 

\begin{lemma}{\label{rjnonrec}} 
%For every $\alpha\in\Ord$, 
$J^{\alpha}$ is not recognizable in $\alpha$ for any $\alpha$. 
\end{lemma}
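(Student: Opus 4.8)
The plan is to argue by contradiction using a self-referential (diagonal) construction, exactly as in the classical proof that the halting problem is undecidable, but adapted to the weaker notion of recognizability. Assume toward a contradiction that $J^{\alpha}$ is recognizable from $\alpha$, witnessed by a program $R$: thus for every $y\subseteq\omega$ the computation $R^{y}(\alpha)$ halts, with output $1$ if $y=J^{\alpha}$ and output $0$ otherwise. The key feature I would exploit is that a recognizer halts on \emph{every} oracle, so $R$ can be invoked as a subroutine that is guaranteed to return.

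Next I would use the recursion theorem for OTM-computability to manufacture a program with access to its own index. Concretely, I would define a program $P_{e}$ (with parameter $\alpha$ and self-index $e$) that, on oracle input $y\subseteq\omega$, first runs $R^{y}(\alpha)$. If this returns $0$ (so $y\neq J^{\alpha}$), then $P_{e}$ halts with output $0$. If it returns $1$ (so $y=J^{\alpha}$), then $P_{e}$ inspects whether $e\in y$: if $e\notin y$ it halts with output $1$, and if $e\in y$ it deliberately diverges. The recursion (fixed-point) theorem supplies an index $e$ for which this description is faithful, with the parameter $\alpha$ carried along uniformly.

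The contradiction then comes from a two-case analysis on whether $e\in J^{\alpha}$. If $e\notin J^{\alpha}$, then on every oracle $y$ the subroutine $R^{y}(\alpha)$ halts, so $P_{e}$ halts on all inputs, and it outputs $1$ exactly on $y=J^{\alpha}$ (since in that case $e\notin y$); hence $P_{e}^{\alpha}$ recognizes the real $J^{\alpha}$, so by definition of $J^\alpha$ we get $e\in J^{\alpha}$, a contradiction. If instead $e\in J^{\alpha}$, then on the single input $y=J^{\alpha}$ the program $P_{e}$ reaches the branch $e\in y$ and diverges; since a recognizer must halt on all inputs, $P_{e}^{\alpha}$ recognizes no real, so $e\notin J^{\alpha}$, again a contradiction. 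As both alternatives are impossible, no such $R$ can exist, and $J^{\alpha}$ is not recognizable from $\alpha$.

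I expect the only genuine subtlety to be the legitimacy of the self-reference: one must invoke the recursion theorem for OTM-computability so that $P_{e}$ really does have access to its own code $e$ while running with the fixed ordinal parameter $\alpha$. Everything else is bookkeeping, the essential point being that whether $P_{e}^{\alpha}$ recognizes some real is a \emph{global} property (halting on all inputs, with output $1$ on a unique oracle), which is precisely what the two cases are engineered to make contradictory. Note also that totality of $R$ on all oracles is used crucially: it guarantees that the subroutine call inside $P_{e}$ always terminates, so that the only possible source of divergence of $P_{e}$ is the intended diagonal loop.
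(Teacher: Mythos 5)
Your argument is correct, but it is not the route the paper takes. The paper's proof is a Cantor-style diagonalization against the \emph{outputs}: assuming $J^{\alpha}$ is recognizable from $\alpha$, it lists the reals $x_n$ recognized by the programs $P_n(\alpha)$ for $n\in J^{\alpha}$ (setting $x_n=0$ otherwise) and builds a diagonal real $y$ with $y\neq x_n$ for all $n$ that is still recognizable from $\alpha$ --- contradicting the fact that every real recognizable from $\alpha$ must appear as some $x_n$. Your proof instead diagonalizes at the level of \emph{indices}: by the recursion theorem you build a single program $P_e$ whose membership in $J^{\alpha}$ is self-defeating in both cases. The two approaches buy different things. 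Yours is more self-contained in one respect: it uses the hypothetical recognizer $R$ only as a total subroutine, and never needs to extract or compute the recognized reals $x_n$, whereas the paper's sketch implicitly requires that one can recognize the diagonal real $y$, which involves deciding membership in $J^{\alpha}$ and locating each $x_n$ --- details the paper leaves to the reader. The price you pay is the appeal to the recursion (fixed-point) theorem for OTM-programs; this is legitimate, since OTM-programs are finite Turing programs and the $s$-$m$-$n$ manipulations are purely syntactic (carrying the ordinal parameter $\alpha$ along unchanged), but it is an ingredient the paper does not invoke and which you should state or cite explicitly. With that one point made precise, your two-case analysis is airtight: totality of $R$ guarantees the only possible divergence of $P_e$ is the engineered one on input $J^{\alpha}$ itself, and Definition \ref{relrec} requires halting on all inputs, so divergence on a single oracle indeed ejects $e$ from $J^{\alpha}$.
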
 
\begin{proof} 
Suppose that $J^\alpha$  can be recognized from $\alpha$. 
Let $x_n$ denote the real recognized by $P_n(\alpha)$ is $n\in J^\alpha$ and let $x_n=0$ if $n\notin J^\alpha$. 
We can diagonalize and construct a real $y$ such that $y\neq x_n$ for all $n$ and $y$ is recognizable from $\alpha$. 
%Suppose that $\langle x_n\mid n\in\omega\rangle$ is a canonical enumeration of all subsets of $\omega$ which are recognizable from $\alpha$. 
%Suppose that $x$ is its join. 
%Let $x=\bigoplus_n x_n$ denote a computable variant of the join such that $x(2n)=x_n(2n)$. 
%Suppose that $x(2n)\neq y(2n)$ and $x(2n+1)= y(2n+1)$ for all $n$. %more here
%Then $y$ is recognizable from $\alpha$, but $y\neq x_n$ for all $n$, contradicting the assumption. 
\end{proof}

\begin{lemma} 
Supppose that $V=L$ and $\alpha<\omega_1$.
Then there is a real $x$ which is recognizable from $\omega_{\alpha+1}$ but not from $\omega_\alpha$. 
\end{lemma}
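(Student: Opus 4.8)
The plan is to take $x=J^{\omega_\alpha}$, the recognizable jump for the parameter $\omega_\alpha$, and to show that it is computable — hence recognizable — from $\omega_{\alpha+1}$, while Lemma \ref{rjnonrec} already gives that $J^{\omega_\alpha}$ is not recognizable from $\omega_\alpha$. Computability implies recognizability (given a program computing $x$ from $\omega_{\alpha+1}$, one recognizes $x$ by computing it and comparing with the oracle), so it suffices to produce an OTM computing $J^{\omega_\alpha}$ from $\omega_{\alpha+1}$. First I would note that $\omega_\alpha$ is itself OTM-computable from $\omega_{\alpha+1}$, being the largest $L$-cardinal below $\omega_{\alpha+1}$, a property an OTM can test while enumerating $L$ as in Theorem \ref{enumerateL}; so from the single parameter $\omega_{\alpha+1}$ we may freely use $\omega_\alpha$ as well. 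By definition $n\in J^{\omega_\alpha}$ iff $P_n^{\omega_\alpha}$ recognizes some real, i.e. iff $\exists x\,\forall z\,(P_n^z(\omega_\alpha)\downarrow=\delta_{xz})$.

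The heart of the argument is an absoluteness claim: for every $n$, the statement ``$P_n^{\omega_\alpha}$ recognizes some real'' holds in $L$ if and only if it holds in $L_{\omega_{\alpha+1}}$. Granting this, an OTM with the clock $\omega_{\alpha+1}$ can enumerate $L_{\omega_{\alpha+1}}$ (again via Theorem \ref{enumerateL}, using $\omega_{\alpha+1}$ to know where to stop) and, for each $n$, carry out the bounded search for a witness $x\in L_{\omega_{\alpha+1}}$ and the bounded verification of $\forall z\in L_{\omega_{\alpha+1}}$, running each subcomputation $P_n^z(\omega_\alpha)$ for fewer than $\omega_{\alpha+1}$ steps. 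This halting, set-sized search decides membership in $J^{\omega_\alpha}$ correctly by the claim, and thus computes $J^{\omega_\alpha}$.

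To prove the claim I would use two facts about $\omega_{\alpha+1}$. First, since $\omega_{\alpha+1}\geq\omega_1$, the transitive set $L_{\omega_{\alpha+1}}$ contains every real, so ``$\forall z$'' ranges over exactly the same reals in $L$ and in $L_{\omega_{\alpha+1}}$, and any witness $x$ (being the unique solution of a $\Sigma_1$ formula from $\omega_\alpha$ by Lemma \ref{2.3}) lies in $L_{\sigma(\omega_\alpha)}\subseteq L_{\omega_{\alpha+1}}$. Second, and crucially, whenever $P_n^{\omega_\alpha}$ recognizes a real in $L$, each of its halting computations on a real oracle $z$ terminates below $\omega_{\alpha+1}$: the least halting time $\gamma_z$ is $\Sigma_1$-definable from $z$ and $\omega_\alpha$, so it lies in the $\Sigma_1$-Skolem hull $H$ of $\omega_\alpha\cup\{z\}$ in $L$; since $|H|=\aleph_\alpha$, its transitive collapse is some $L_\rho$ with $\rho<\aleph_\alpha^{+}=\omega_{\alpha+1}$, and as the collapse fixes $\omega_\alpha$ and $z$ while OTM-computations are absolute between transitive models containing their inputs, $\gamma_z<\rho<\omega_{\alpha+1}$. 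Hence every such halting computation is visible inside $L_{\omega_{\alpha+1}}$; combined with the upward and downward absoluteness of halting (halting is $\Sigma_1$, and a halting computation found in $L_{\omega_{\alpha+1}}$ is a genuine one), both directions of the claim follow.

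The main obstacle is precisely this bounding of halting times below $\omega_{\alpha+1}$ together with the verification that $L_{\omega_{\alpha+1}}$ adjudicates each individual computation correctly. Note that no \emph{uniform} bound on the $\gamma_z$ is needed — indeed none exists when $\alpha=0$, since there are $\aleph_1$ reals and $\mathrm{cof}(\omega_1)=\omega_1$ — only that each computation halts below $\omega_{\alpha+1}$ and that $L_{\omega_{\alpha+1}}$ contains all reals. This is what lets the single argument work uniformly for every $\alpha<\omega_1$, including the delicate case $\alpha=0$.
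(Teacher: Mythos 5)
Your proposal is correct and follows essentially the same route as the paper: the witness is the recognizable jump $J^{\omega_\alpha}$, non-recognizability from $\omega_\alpha$ comes from Lemma \ref{rjnonrec}, and recognizability from $\omega_{\alpha+1}$ comes from computing $\omega_\alpha$ as the largest cardinal of $L_{\omega_{\alpha+1}}$ and deciding the halting problem for parameter $\omega_\alpha$ below $\omega_{\alpha+1}$. The only difference is that you spell out, via the $\Sigma_1$-Skolem hull and condensation argument, why halting times of computations $P_n^z(\omega_\alpha)$ with real oracles fall below $\omega_{\alpha+1}$ and why $L_{\omega_{\alpha+1}}$ correctly adjudicates the $\exists x\,\forall z$ statement — details the paper's proof leaves implicit.
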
 
\begin{proof} 
%Suppose that $\alpha<\beta<\omega_1$. 
We show that $J_{\omega_\alpha}$ can be recognized from $\omega_{\alpha+1}$. 
Since $V=L$, the cardinal $\omega_\alpha$ can be computed from $\omega_{\alpha+1}$ as the largest cardinal in $L_{\omega_{\alpha+1}}$. %more here: compute L_{\omega_{\alpha+1}}$, this is cardinal-absolute for L 
This implies that the halting problem for computations with the parameter $\omega_{\alpha}$ is computable from the parameter $\omega_{\alpha+1}$ and this implies the claim. 
%The claim now holds since all programs with parameter $\omega_\alpha$ halt before $\omega_{\alpha+1}$ or do not halt at all. 
Moreover $J_{\omega_{\alpha}}$ is not recognizable from $\omega_{\alpha}$ by Lemma \ref{rjnonrec}. 
\end{proof} 

This fails if $x^{\#}$ exists for every real $x$. 

\begin{lemma} 
Suppose that $x^\#$ exists for every real $x$. 
Suppose that $\kappa$ and $\lambda$ are uncountable cardinals. 
If $P$ recognizes $x$ from $\kappa$, then $P$ recognizes $x$ from $\lambda$. 
\end{lemma}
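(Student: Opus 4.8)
The plan is to reduce the halting behaviour of $P$ to first-order facts about the models $L[z]$ and then exploit that the uncountable cardinals $\kappa$ and $\lambda$ are Silver indiscernibles for each such model. First I would record, as in the proof of Lemma \ref{2.3} and using the absoluteness of OTM computations between transitive models of $\ZFC$ (Theorem \ref{relOTM}), that for each fixed $i\in\{0,1\}$ the statement ``$P^{z}(v)\downarrow=i$'' is equivalent to a $\Sigma_1$ formula $\psi_i(z,v)$ evaluated in $L[z]$, with $z$ and the ordinal $v$ as its only parameters. Here $L[z,v]=L[z]$ since $v$ is an ordinal, and $L[z]\models\ZFC$, so the computation has the same value in $V$ and in $L[z]$.

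Next I would invoke the hypothesis that $x^{\#}$ exists for every real $x$: applied to an arbitrary $z\subseteq\omega$ this yields that $z^{\#}$ exists, so by Silver's theorem (see \cite{MR1940513}) every uncountable cardinal of $V$ is a Silver indiscernible for $L[z]$, and the class of these indiscernibles is a set of indiscernibles for the structure $\langle L[z],\in,z\rangle$, i.e.\ indiscernible over the real parameter $z$. In particular $\kappa$ and $\lambda$ are both indiscernibles over $z$, so for the one-ordinal-parameter formulas $\psi_i$ we have $L[z]\models\psi_i(z,\kappa)$ if and only if $L[z]\models\psi_i(z,\lambda)$.

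Now suppose $P$ recognizes $x$ from $\kappa$, and fix any $z\subseteq\omega$. Then $P^{z}(\kappa)\downarrow=\delta_{xz}$, so $L[z]\models\psi_{\delta_{xz}}(z,\kappa)$, whence $L[z]\models\psi_{\delta_{xz}}(z,\lambda)$ by indiscernibility, and therefore $P^{z}(\lambda)\downarrow=\delta_{xz}$ by the absoluteness from the first step. As $z$ was arbitrary, $P^{z}(\lambda)$ halts with output $\delta_{xz}$ for every $z\subseteq\omega$, which is exactly the assertion that $P$ recognizes $x$ from $\lambda$.

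The main obstacle is the second paragraph: one must ensure that the indiscernibility is genuinely available \emph{over} the real parameter $z$ and holds uniformly for every real $z$, not merely for $z=x$, since the recognizing condition quantifies over all oracles $z\subseteq\omega$; this is what forces the hypothesis ``$x^{\#}$ exists for every real'' rather than just ``$x^{\#}$ exists''. A secondary point to check is that halting itself, and not merely the output conditional on halting, transfers from $\kappa$ to $\lambda$. This is handled above because ``$P^{z}(v)\downarrow=i$'' already incorporates halting, and for each $z$ exactly one of $\psi_0(z,v),\psi_1(z,v)$ holds at $v=\kappa$ by the recognizing hypothesis, so the same disjunct holds at $v=\lambda$.
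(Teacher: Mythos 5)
Your proposal is correct and follows essentially the same route as the paper: both reduce the statement ``$P^{z}(v)\downarrow=i$'' to a first-order (indeed $\Sigma_1$) fact about $L[z]$ with the ordinal $v$ as parameter, and then use that $\kappa$ and $\lambda$ are Silver indiscernibles over $L[z]$ for every oracle $z$, which is exactly where the hypothesis that $z^{\#}$ exists for every real $z$ enters. Your write-up is in fact somewhat more careful than the paper's, which argues the $z\neq x$ case by contradiction rather than directly, but the underlying idea is identical.
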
 
\begin{proof}
Suppose that $P^x(\kappa)$ halts with output $1$. 
Since $\kappa$ and $\lambda$ are indiscernible over $L[x]$, $P^x(\lambda)$ halts with output $1$. 
We claim that $P^x(\lambda)$ halts with output $0$ if $x\neq y$. 
Otherwise $P^x(\lambda)$ halts with output $0$ or diverges.  
Since $\kappa$ and $\lambda$ are indiscernible over $L[x]$, the same holds for $P^y(\lambda)$, contradicting the assumption. 
%It remains to check two cases. 
%First suppose that $y\neq x$ and $P^y(\lambda)$ halts with output $1$. Since $\kappa$ and $\lambda$ are indiscernible over $L[y]$, $P^y(\kappa)$ halts with output $1$. This contradicts the assumption that $P$ recognizes $x$ from $\kappa$. 
%Second suppose that $y\neq x$ and $P^y(\lambda)$ does not halt. Then $P^y(\kappa)$ does not halt by indiscernibility, contradicting the assumption. 
\end{proof} 

%\begin{remark}
%In particular, this implies that, under the assumption that $x^{\sharp}$ exists for every $x$, every real which is recognizable from $\omega_1$ is recognizable from every cardinal $\kappa>\omega_{1}$.
%This follows in particular from the assumption that $M_{1}^{\sharp}$ exists: For suppose that $x$ is a real and that $M_1^{\sharp} \rightarrow N$ is a countable iteration such that $N[x]$ 
%5is generic over $N$ for the extender algebra in $N$. The forcing is below the critical point of the top measure of $N$, so $N[x]$ 
%is an $x$-mouse with  a top measure.  Hence $x^{\sharp}$ exists. 
%\end{remark}

%\begin{remark} VERSCHOBEN IN DEN M_1-TEIL
%Suppose that $\gamma$ is countable. Then the OTM-recognizable jump with parameter $\gamma$ is in $M_1$.  [WARUM? evtl. voraussetzung verstaerken zu `countable in $M_1$'.]
%\end{remark} 

%%%%%%%%%%%%%%%%%%%%%%%%%%%%%%%%%%%%%%%%%%%%%%%%%%%
%%%%%%%%%%%%%%%%%%%%%%%%%%%%%%%%%%%%%%%%%%%%%%%%%%%
\subsection{Arbitrary ordinals} 

%Theorem \ref{0sharprecog} and its proof can be generalized in a straighforward way:

%\begin{thm}{\label{sharpclosure}}
 %Assume that $x\in \text{RECOG}_{\text{pOTM}}$. Then $x^{\sharp}\in \text{RECOG}_{\text{pOTM}}$.
%\end{thm}
%\begin{proof}
%evtl. more here.
%We give a more direct algorithmical argument. (Algorithm for computing ultrapowers and direct limits...).
%$x^{\sharp}$ must be presented in a manner that allows the reconstruction of $x$. Then recognize $x$ and compute the first $\omega_1$ many iterates.
%\end{proof}

%\begin{lemma}{\label{sharpscreateunrecog}}
% $L[0^{\sharp}]$ contains a parameter-$OTM$-unrecognizable real $x$.
%\end{lemma}
%\begin{proof}
% In $L[0^{\sharp}]$, $L_{\omega_{2}}$ is countable. The set $\mathcal{D}$ of all constructible dense subsets of Cohen-forcing is an element of
% $L_{\omega_{2}}$ by condensation in $L$. Hence $\mathcal{D}$ is countable in $L[0^{\sharp}]$. 
%Thus Rasiowa-Sikorski guarantees the existence of an $L$-generic filter $G\in L[0^{\sharp}]$. 
%By Lemma \ref{genunrecog}, $x:=\bigcup{G}$ is as desired.
%\end{proof}

In this section, we consider the recognizable closure with arbitrary ordinal parameters. 
We first give several alternative definitions of the recognizable closure. 

\begin{definition} 
The logic $L_{Ord,0}^{(\beta)}$ is the closure under infinitary conjunctions and disjunctions of the atomic statements %$p_{\alpha}$ 
$\alpha\in \dot{x}$ and their negations for $\alpha<\beta$. 
\end{definition} 

The sentences in $L_{Ord,0}^{(\beta)}$ are interpreted as descriptions of a subset $x$ of $\beta$. 
% such that $p_\alpha$ holds if and only if $\alpha\in x$. 
This is related to the notion of implicitly definable sets from \cite{HaLeah}. %[Hamkins: Algebraicity and implicit definability in set theory]. 

\begin{definition} 
Suppose that $\alpha$ is an ordinal and $x$ is a subset of $\alpha$. 
\begin{enumerate-(i)} 
\item 
$x$ is \emph{implicitly definable over $L$} if there is a formula $\varphi(z,\beta)$ and an ordinal $\beta$ such that $x$ is the unique subset $z$ of $\alpha$ such that $\langle L,\in, z\rangle\vDash \phi(z,\beta)$. 
\item 
$x$ is \emph{$(L_{Ord,0}\cap L)$-definable} if there is an $L_{Ord,0}^{(\alpha)}$-formula $\varphi(z,\beta)$ in $L$ and an ordinal $\beta$ such that $x$ is the unique subset $z$ of $\alpha$ such that $\varphi(z,\beta)$ holds. 
\item 
$x$ is \emph{$S^L$-definable} if there is a formula $\varphi(z,\beta)$ and an ordinal $\beta$ such that $x$ is the unique subset $z$ of $\alpha$ with $L[z]\vDash \phi(z,\beta)$. 
\end{enumerate-(i)} 
\end{definition} 

%Note that the formula is evaluated in $L$. % as opposed to $L[z]$. 
%It is open \todo{cite the question from Hamkins' paper} whether any subset of $\omega$ which is Turing equivalent to $0^{\#}$ is implicitly definable over $L$. 
The $S^L$-definable sets are a variant of the $S^L$-definable sets in \cite[Section 1]{LuSc}. 
The following result connects recognizable sets with implicitly definable sets. 
We would like to thank Gunter Fuchs for suggesting that there might be a relationship between these notions. 
% recognizable sets and implicitly definable sets. 

\begin{lemma} 
Suppose that $\alpha$ is an ordinal and $x$ is a subset of $\alpha$. 
%For any subset $x$ of an ordinal $\alpha$, 
The following conditions are equivalent. 
%It is equivalent that $x$ is constructible from a subset $y$ of $\alpha$ with the following property. 
\begin{enumerate-(i)} 
\item 
$x$ is constructible from a recognizable subset $y$ of some ordinal $\beta$. %i.e. $y\in \mathcal{C}_\omega^{\beta}$. 
%$y$ is a recognizable subset of $\alpha$. 
\item 
$x$ is constructible from a subset $y$ of some ordinal $\beta$ which is $S^L$-definable from some ordinal by a $\Sigma_1$-formula. 
\item 
$x$ is constructible from a subset $y$ of some ordinal $\beta$ which is $S^L$-definable from some ordinal. 
%There is a $\Sigma_1$ formula $\phi(z,\beta)$ and an ordinal $\beta$ such that  
%$y$ is the unique subset $z$ of $\alpha$ with $L[z]\vDash \phi(z,\beta)$. 
%\item 
%There is a formula $\phi(z,\beta)$ and an ordinal $\beta$ such that  
%$y$ is the unique subset $z$ of $\alpha$ with $L[z]\vDash \phi(z,\beta)$. 
\item 
$x$ is constructible from some subset $y$ of an ordinal $\beta$ which is implicitly definable over $L$. 
\item 
$x$ is constructible from some $(L_{Ord,0}\cap L)$-definable subset $y$ of an ordinal $\beta$. 
\end{enumerate-(i)} 
\end{lemma}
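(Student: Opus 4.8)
The plan is to run the cycle $(\mathrm{i})\Leftrightarrow(\mathrm{ii})\Rightarrow(\mathrm{iii})\Rightarrow(\mathrm{iv})\Rightarrow(\mathrm{v})\Rightarrow(\mathrm{i})$, where the first equivalence is essentially a restatement of Lemma \ref{2.3} and the last implication is a direct computation, while the steps $(\mathrm{iii})\Rightarrow(\mathrm{iv})$ and $(\mathrm{iv})\Rightarrow(\mathrm{v})$ carry the real content. For $(\mathrm{i})\Leftrightarrow(\mathrm{ii})$ I would specialize Lemma \ref{2.3} to the empty oracle: a subset $y$ of $\beta$ is recognizable (from no oracle, with an ordinal parameter) if and only if $y$ is the unique $z\subseteq\beta$ with $L[z]\models\varphi(z,\delta)$ for some $\Sigma_1$-formula $\varphi$ and ordinal $\delta$, which is exactly the assertion that $y$ is $S^L$-definable by a $\Sigma_1$-formula. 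The implication $(\mathrm{ii})\Rightarrow(\mathrm{iii})$ is immediate, since a $\Sigma_1$-definition is a special case of a definition by an arbitrary formula, and in all conditions $x$ is only required to be constructible from the witness.

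The step $(\mathrm{iii})\Rightarrow(\mathrm{iv})$ is the main obstacle. Here $y$ is the unique $z\subseteq\beta$ with $L[z]\models\varphi(z,\delta)$ for some formula $\varphi$, and the difficulty is that $L[z]$ contains sets outside $L$, so one cannot naively re-express ``$L[z]\models\varphi$'' over the structure $\langle L,\in,z\rangle$, whose quantifiers range only over $L$. My plan is first to localize: using $\Sigma_n$-reflection in $L[z]$ together with condensation, for each $z\subseteq\beta$ the ordinals $\tau>\beta$ with $L_\tau[z]\prec_{\Sigma_n}L[z]$ form a club; intersecting these $2^{|\beta|}$-many clubs below a regular cardinal $\kappa>2^{|\beta|}$ yields a single $\tau^*$ such that, for every $z\subseteq\beta$, $L_{\tau^*}[z]\models\varphi(z,\delta)$ if and only if $z=y$. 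I then replace $y$ by the constructibly equivalent $y'\subseteq\tau^*$ coding $(L_{\tau^*}[y],\in)$ under the canonical bijection fixing the ordinals below $\tau^*$. The point is that ``$z$ codes the genuine hierarchy $L_{\tau^*}[\bar a]$'' can be expressed \emph{first-order over} $\langle L,\in,z\rangle$ by demanding level-by-level correctness along the genuine ordinals $\xi<\tau^*$: at each $\xi$ one asserts that the $z$-coded level $\xi$ is exactly the set of G\"odel combinations of the lower $z$-coded levels together with the decoded predicate $\bar a$, a condition that only queries $z$ at ordinals below $\tau^*$ and otherwise quantifies over $L$. Crucially this forces well-foundedness for free, since the structure is built along genuine ordinals, so no $\Pi^1_1$-statement ruling out descending sequences (which need not lie in $L$) is required. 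Adjoining the clause that the coded structure satisfies $\varphi(\bar a,\delta)$ then, by the choice of $\tau^*$ and the uniqueness of $y$, pins $z=y'$ uniquely, so $y'$ is implicitly definable over $L$ and $x\in L[y]\subseteq L[y']$.

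For $(\mathrm{iv})\Rightarrow(\mathrm{v})$ I would run the same reflection localization, now for the structure $\langle L,\in,z\rangle$, obtaining $\tau^*$ with $\langle L_{\tau^*},\in,z\rangle\models\varphi(z,\delta)\Leftrightarrow z=y$ for all $z\subseteq\beta$. Since $L_{\tau^*}$ is a fixed element of $L$, unfolding the first-order quantifiers of $\varphi$ into set-sized conjunctions and disjunctions over the elements of $L_{\tau^*}$, with the atomic statements ``$\alpha\in z$'' (for $\alpha<\beta$) becoming the propositional atoms $\alpha\in\dot x$, produces a single formula $\Phi\in L^{(\beta)}_{Ord,0}\cap L$ of which $y$ is the unique model, giving $(\mathrm{v})$ with the same $y$. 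Finally $(\mathrm{v})\Rightarrow(\mathrm{i})$ is a computation: $\Phi$ is a constructible set, so by Theorem \ref{enumerateL} an OTM can reconstruct it from an ordinal parameter coding its $<_L$-index and then evaluate the well-founded, set-sized satisfaction of $\Phi$ against any oracle $z\subseteq\beta$ by transfinite recursion, halting with output $\delta_{yz}$; hence $y$ is recognizable and $x\in L[y]$. I expect the bookkeeping in $(\mathrm{iii})\Rightarrow(\mathrm{iv})$, especially the precise first-order formulation of level-by-level correctness and the verification that it admits no spurious (ill-founded) solutions, to be the principal technical burden.
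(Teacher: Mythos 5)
Your proposal is correct and follows essentially the same route as the paper: the same cycle of implications, with (i)$\Leftrightarrow$(ii) read off from Lemma \ref{2.3}, reflection to a single level $\tau^*$ (the paper uses $V_\gamma\prec_{\Sigma_{n+1}}V$ in place of your club intersection) to localize the definitions, the key observation that a code for the $L[y]$-hierarchy built along genuine ordinals is implicitly definable over $L$ with well-foundedness obtained for free, and the unfolding of the first-order definition over $L_{\tau^*}$ into an infinitary propositional formula for (iv)$\Rightarrow$(v). Only the coding bookkeeping differs cosmetically (the paper codes the levels into intervals $[\delta_\alpha,\delta_{\alpha+1})$ rather than via a single bijection onto $\tau^*$, which is what you actually need since no bijection $L_{\tau^*}[y]\to\tau^*$ can fix all ordinals).
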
 
\begin{proof} 
Suppose that (i) holds. 
Suppose that the $\Sigma_1$-formula $\varphi$ states that there is a computation which halts with end state $1$ for the input $y$. This implies (ii). 
the condition (ii) implies (iii). 

Suppose that (iii) holds. 
Suppose that $\varphi$ is a $\Sigma_n$-formula and $\gamma>\alpha,\beta$ is an ordinal with $V_\gamma \prec_{\Sigma_{n+1}} V$. 
Then $L_{\gamma}[z]\prec_{\Sigma_n} L[z]$ and hence $L[z]\vDash \varphi(z,\beta) \Leftrightarrow L_\gamma [z]\vDash \varphi(z,\beta)$ for all $z
\subseteq\omega$ and $\beta<\gamma$. 

\begin{claim*} 
There is a set $A$ of ordinals such that $A$ is implicitly definable over $L$ and $L_\gamma[y]$ is constructible from $A$. 
% and $A$ is implicitly definable over $L$. 
\end{claim*} 
\begin{proof} 
%Suppose that $z\subseteq\omega$. 
We consider the G\"odel functions in \cite[Definition 13.6]{MR1940513} with three additional functions $H_0(u,v)=\langle u,v\rangle$, % = \{\{x,y\}, \{x\}\}$,
$H_1(u)=u$ and $H_2(u)=u\cap y$. 
Instead of the $L$-hierarchy over $y$, we consider  the hierarchy of sets $M_\alpha$, where $M_{\alpha+1}$ is defined by closing under the G\"odel functions with the additional functions. 
%\begin{enumerate-(i)} 
%\item 
%$G_0(x,y)=\{x,y\}$ 
%\item 
%$G_1(x.y)=x\times y$ 
%\item 
%$G_2(x,y)=\{\langle u,v\rangle \mid u\in X,\ v\in Y,\ u\in v\}$ 
%\item 
%$G_3(x,y)=x\setminus y$ 
%\item 
%$G_4(x,y)=x\cap y$
%\item 
%$G_5(x,y)=\langle x,y\rangle = \{\{x,y\}, \{x\}\}$ 
%\item 
%$G_6(x)=\bigcup x$ 
%\item 
%$G_7(x)=\mathrm{dom}(x)$ 
%\item 
%$G_8(x)=\{\langle u,v\rangle \mid \langle v,u\rangle\in x\}$ 
%\item 
%$G_9(x)=\{\langle u,v,w\rangle \mid \langle u,w,v\rangle \in x\}$ 
%\item 
%$G_{10}(x)=\{\langle u,v,w\rangle \mid \langle v,w,u\rangle \in x\}$ 
%\item 
%$G_{11}(x)=x$ 
%\item 
%$G_{12}(x)=x\cap z$ 
%\end{enumerate-(i)} 
%The additional functions $G_5$, $G_{11}$ $G_{12}$ ensure that the set of images of the G\"odel functions of all $x\in u$ and $x,y\in u$ for a transitive set $u$ is again transitive. 
%The function $H$ ensures that $M_{\alpha+1}[z]$ is transitive for all $\alpha$. 
%
%Let $M_0=\emptyset$. 
%Let \todo{conflict of notation for $\alpha$} $M_{\alpha+1}[z]$ denote the union of $M_\alpha[z]$ with the set of all sets obtained by applying a G\"odel function to one or two elements of $M_\alpha[z]$. 
%Let $M_\alpha=\bigcup_{\beta<\alpha} M_\beta$ for limit ordinals $\alpha$. 
%Then $L_\gamma [z]=\bigcup_{\alpha<\gamma} M_\alpha[z]$. 
%G\"odel functions are the functions in [Jech, Def. 13.6] together with two additional functions 
This induces a canonical wellorder on $L[y]$. 

We can code the levels of the hierarchy in the intervals between a strictly increasing sequence $\langle \delta_\alpha\mid \alpha<\gamma\rangle$ such that for each $\alpha<\gamma$, there is a canonical bijection between the interval $[\delta_\alpha,\delta_{\alpha+1})$ and all applications of the G\"odel functions to ordinals below $\delta_\alpha$. Let $\delta=\sup_{\alpha<\gamma}\delta_\alpha$. 
Note that every element of $L_{\gamma}[y]$ has many representations. 
We consider  
%\begin{enumerate} 
%\item 
$\langle \delta_\alpha\mid \alpha<\gamma\rangle$, 
%\item 
the pointwise images of the equality and element relations of $L_\gamma[y]$ and 
the image of the canonical well-order of $L_\gamma[y]$ in $\delta$. 
%\item 
%the least ordinals representing $\beta$ and $z$. 
%\end{enumerate} 
Let $A$ code these sets modulo G\"odel pairing. Note that G\"odel pairing is computable. 
We consider the implicit definition of $A$ which states that the sets coded by $A$ follow the construction of the hierarchy and that $\varphi(z,
\beta)$ holds in the structure code by $A$. 
Hence $A$ is implicitly definable over $L$. 
\end{proof} 
Therefore (iii) implies (iv). 

Suppose that (iv) holds. 
Suppose that $\varphi$ is a $\Sigma_n$-formula and $\gamma$ is an ordinal with $V_\gamma \prec_{\Sigma_{n+1}} V$ and $\gamma>\alpha,\beta$. 
Then $L_{\gamma}[z]\prec_{\Sigma_n} L[z]$ and hence $L[z]\vDash \varphi(z,\beta) \Leftrightarrow L_\gamma [z]\vDash \varphi(z,\beta)$ for all $z\in V_\gamma$ and $\beta<\gamma$. 
Then $y$ is an $(L_{\gamma,0}\cap L)$-definable subset of $\alpha$. % (of finite depth). 
Note that this is the translation of a first-order formula and hence its depth is finite. 
This implies (v). 

Suppose that (v) holds. Since validity of an $(L_{Ord,0}^{(\alpha)}\cap L)$-formula for $z$ can be calculated in $L[z]$, the set $y$ is recognizable. 
%Suppose that $x$ is computable from $y$ and $y$ is definable with some $(L_{Ord,0}^{(\gamma)}\cap L)$-formula $\psi(z)$. 
%Suppose that $\psi$ is the $\beta$-th element of $L$ in the canonical wellorder. 
%Then there is  a $\Sigma_1$ formula $\phi$ such that $\psi(z)$ is equivalent to $\phi(z,\beta)$ for all subsets $z$ of $\alpha$. 
%There is an OTM program with parameter $\beta$ which searches for a witness for this $\Sigma_1$-formula. 
%
%It remains to prove the equivalence between the last condition and the remaining conditions. The third condition implies the last condition, this is the same argument as for why the third condition implies the fourth condition. 
%Also, the last condition implies the third condition, since this is a special case. 
\end{proof} 

%\begin{lemma} 
%Suppose that $x$ is a subset of an ordinal $\alpha$ which is implicitly definable over $L$. Then $x$ is recognizable. 
%\end{lemma} 
%\begin{proof} 
%A reflection argument. Suppose that $x$ is implicitly defined by a $\Sigma_n$ formula $\phi$. 
%Suppose that $V_{\kappa}\prec_{\Sigma_{n+1}} V$. 
%
%Then we can recognize $x$ (with parameter $\kappa$?). 
%\end{proof} 

While the proof shows that every subset of an ordinal which is implicitly definable over $L$ is recognizable, the converse is open. 

\begin{question} 
\begin{enumerate} 
\item 
Is is consistent that there is a recognizable set of ordinals which is not implicitly definable? 
\item 
(see \cite[after Corollary 8]{HaLeah}) 
Is there a real $y$ which is Turing equivalent to $0^{\#}$ and which  is implicitly definable? 
\end{enumerate} 
\end{question} 

We mentioned $0^{\sharp}$ as an example of a lost melody. However, large cardinal assumptions are not necessary for the existence of lost melodies. 

\begin{thm}{\label{genericlostmelody}} 
There is a set-generic extension of $L$ by a real such that in the extension, every set of ordinals is recognizable. %in particular there is a proper class of lost melodies. 
\end{thm}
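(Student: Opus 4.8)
The plan is to realize the desired model as $L[r]$ for a single real $r$ added by a Jensen-style forcing $\mathbb{J}\in L$, that is, a Souslin-tree-like subforcing of Sacks forcing defined over $L$ from $\diamondsuit$, chosen so that the generic real $r$ has two features: it is \emph{recognizable}, and it produces a \emph{minimal} extension. By the first feature I mean that, inside $L[r]$, the real $r$ is the unique $\mathbb{J}$-generic real over $L$, and that ``$\rho$ is $\mathbb{J}$-generic over $L$'' is checkable by an OTM with oracle $\rho$ and parameter $\omega_1^L$; this is the analogue of the mechanism used in Lemma \ref{0sharprecog}, where a $\Pi^1_2$ condition is verified inside an initial segment of the constructible hierarchy of height $\omega_1$. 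By the second feature I mean that $L[r]$ has \emph{no} intermediate models: every set of ordinals $x\in L[r]$ satisfies either $x\in L$ or $r\in L[x]$, so that $L[x]=L[r]$. Since $\mathbb{J}$ is a tree forcing the generic filter is coded by $r$, whence $L[G]=L[r]$ and $V=L[r]$; it then remains to recognize every set of ordinals of $L[r]$ from ordinal parameters.

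Next I would split into two cases. If $x\subseteq\alpha$ is constructible then $x\in L$, so $x$ is OTM-computable from ordinal parameters by Theorem \ref{relOTM}(1), and a computing program at once yields a recognizing one (on input $z$, compute $x$ and halt with $\delta_{xz}$); hence $x$ is recognizable. The interesting case is $x\notin L$. Fix the index $\beta$ of $x$ in the canonical well-order of $L[r]$ and consider the OTM-program, with ordinal parameters $\omega_1^L,\alpha,\beta$, that on input $z\subseteq\alpha$ dovetails two searches: (a) a search for $z$ in an enumeration of $L$; and (b) a search, in the enumeration of $L[z]$ provided by Theorem \ref{enumerateL} (relativized to the oracle $z$), for a real $\rho$ that is $\mathbb{J}$-generic over $L$, followed by the reconstruction of $L[\rho]$ from $\rho$, the computation of the $\beta$-th subset $w$ of $\alpha$ in the canonical well-order of $L[\rho]$, and the comparison of $w$ with $z$. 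The program halts as soon as either branch halts. If $z\in L$ then branch (a) halts and we output $0$, which is correct since $x\notin L$; and branch (b) cannot halt, since $L[z]=L$ contains no $\mathbb{J}$-generic real. If $z\notin L$ then branch (a) diverges, but minimality gives $r\in L[z]$, so branch (b) locates $\rho=r$ (here I use uniqueness of the generic), reconstructs $L[r]$, finds $w=x$, and halts with $\delta_{zw}$, i.e. with $1$ exactly when $z=x$. Thus the program halts on every $z\subseteq\alpha$ with output $\delta_{xz}$, witnessing that $x$ is recognizable; equivalently the recognizing condition may be phrased as a $\Sigma_1$ formula over $L[z]$ as in Lemma \ref{2.3}.

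The main obstacle is the construction and verification of $\mathbb{J}$: one must obtain a \emph{single} real $r$ that simultaneously (A) is a definable, hence recognizable-from-$\omega_1^L$, generic, so that branch (b) can identify it, and (B) yields a strongly minimal extension, so that \emph{every} non-constructible set of ordinals, and not merely every real, computes $r$. Minimality of the real for reals is the classical Sacks/Jensen property, and definability of the generic is precisely what the $\diamondsuit$-construction of $\mathbb{J}$ buys; the delicate point is upgrading minimality from reals to arbitrary sets of ordinals, i.e. ruling out intermediate models $L\subsetneq L[x]\subsetneq L[r]$ in which $L[x]$ adds no new real. I would establish this using the fusion and $\omega^\omega$-bounding properties of $\mathbb{J}$ as a subforcing of Sacks forcing to argue that any intermediate model is already generated by a real, so that ordinary minimality of $r$ closes the gap and property (B) holds.
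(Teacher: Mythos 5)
Your overall strategy coincides with the paper's: force with Jensen's $\diamondsuit$-constructed c.c.c.\ subforcing $\mathbb{J}$ of Sacks forcing, use the fact that the generic real $x$ is the unique $\mathbb{J}$-generic real in $L[x]$ (a $\Pi^1_2$ condition, hence recognizable from $\omega_1$ by Shoenfield absoluteness), and then conclude via minimality that every non-constructible set of ordinals is constructibly equivalent to $x$ and hence recognizable (your explicit dovetailing program is just an unwound form of Lemma \ref{conequiv} combined with Theorem \ref{enumerateL}). Up to that point the argument is correct and matches the paper.

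The genuine gap is exactly at the step you flag as delicate: upgrading minimality from reals to arbitrary sets of ordinals, i.e.\ showing that every $X\in L[G]\setminus L$ satisfies $L[X]=L[x]$. Your proposed tool --- ``fusion and $\omega^\omega$-bounding of $\mathbb{J}$ as a subforcing of Sacks forcing'' --- does not work as stated. The forcing $\mathbb{J}$ is an $\omega_1$-sized family of perfect trees assembled by a $\diamondsuit$-recursion; it is closed only under the particular fusions performed during its construction, not under the arbitrary fusion sequences one would need to read the generic off a name for a subset of an uncountable ordinal (the Sacks-style fusion argument for minimality produces conditions that in general lie outside $\mathbb{J}$). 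Moreover a name for a subset of $\kappa$ involves $\kappa$ many (countable) antichains, so c.c.c.\ of $\mathbb{J}$ alone does not reduce $X$ to a real. The paper closes this gap with an argument of Kanovei that uses the chain condition of the \emph{square} $\mathbb{J}^2$ (indeed of all finite powers $\mathbb{J}^n$, Jensen's Lemma 6): given a name $\sigma$ for a non-constructible set of ordinals, every pair of conditions can be extended to a pair $(\bar S,\bar T)$ deciding some ordinal $\alpha_{S,T}$ in opposite ways; a maximal antichain $A$ in $\mathbb{J}^2$ of such pairs is countable, so the set $B=\{\alpha_{S,T}:(S,T)\in A\}$ is countable, and $\sigma^G\cap B$ is already a non-constructible set coded by a real $y$. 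Then minimality for reals (Jensen's Lemma 11) gives $L[y]=L[x]$, and since $\sigma^G\cap B\in L[\sigma^G]$ one concludes $L[\sigma^G]=L[x]$. Without the c.c.c.\ of $\mathbb{J}^2$ (or some substitute reduction of arbitrary names to real names) your branch~(b) has no reason to find the generic $\rho=r$ inside $L[z]$ for a non-constructible set of ordinals $z$, and the theorem as stated --- about all sets of ordinals, not just reals --- is not proved.
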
 
\begin{proof} 
There is a c.c.c. subforcing of Sacks forcing in $L$ which adds a $\Pi^1_2$-definable minimal real $x$ over $L$ \cite{Je1}. 
Since $x$ is $\Pi^1_2$-definable, it is recognizable from $\omega_1$ by Shoenfield absoluteness, as in the proof of Theorem \ref{0sharprecog}. 
%ince $x\in L[y]$ for every real $y\in L[x]\setminus L$ and since $x$ is recognizable, every real $y\in L[x]$ is recognizable. 
%After Jensen forcing over $L$, every real is recognizable, since the $\Pi^1_2$ real added by Jensen forcing is minimal. 
Clearly, every constructible real is computable and hence recognizable. 
By minimality we have $x\in L[y]$ for every real $y\in L[x]\setminus L$.
Hence all non-constructible elements of the generic extension are constructibly equivalent to each other and in particular to $x$. 
Since $x$ is recognizable, it follows from Theorem \ref{conequiv}  that every real $y\in L[x]$ is recognizable. 
%After Jensen forcing over $L$, every real is recognizable, since the $\Pi^1_2$ real added by Jensen forcing is minimal. 
%As $x\in L[x]\setminus L$, $L[x]$ contains a non-constructible and hence non-computable real number which is nevertheless recognizable. Thus $L[x]$ contains a lost melody, as required by the second statement.

We now argue that the Jensen real is minimal. 
Jensen \cite[Lemma 11]{Je1} showed that the Jensen real is minimal for reals. 
We would like to thank Vladimir Kanovei for the following argument. 

\begin{claim*} 
Suppose that $G$ is Jensen generic over $L$. 
Suppose that $X\subseteq\kappa$ is a set of ordinals in $L[G]\setminus L$. 
Then there is a real $y$ such that $L[X]=L[y]$. 
\end{claim*} 
\begin{proof} 
Let $\mathbb{J}$ denote Jensen forcing. 
Suppose that $\sigma$ is a $\mathbb{J}$-name for a set of ordinals, and $1_{\mathbb{J}}\Vdash \sigma \notin L$. 
Then for any pair $S,T$ of conditions in $\mathbb{J}$, there is a pair of conditions $\bar{S},\bar{T}$ with $\bar{S}\leq S$, $\bar{T}\leq T$ and an ordinal $\alpha_{S,T}$ such that $\bar{S} \Vdash \alpha_{S,T} \in \sigma$, while  $\bar{T}\Vdash \alpha_{S,T} \notin \sigma$. 
Suppose that $A$ is a maximal antichain in $\mathbb{J}^2$. 
The forcing $\mathbb{J}^2$ is c.c.c. by [Jensen: Definable sets of minimal degree, Lemma 6], and in fact $\mathbb{J}^n$ is c.c.c. for any $n$. 
Hence $A$ is countable. Let $B$ denote the set of $\alpha_{S,T}$ for all $(S,T)\in A$. 

Then for any pair $S,T$ of conditions in $\mathbb{J}$, there is a pair $\bar{S},\bar{T}$ with $\bar{S}\leq S$ and $\bar{T}\leq T$ and an ordinal $\alpha\in B$ such that $\bar{S} \Vdash \alpha \in \sigma$, while  $\bar{T}\Vdash \alpha \notin \sigma$. 
Let $\tau$ denote a $\mathbb{J}$-name for $\sigma^G\cap B$, where $G$ is $\mathbb{J}$-generic over $L$. 
Since $B$ is countable in $L$, there is a real $y\in L[G]$ such that $L[\tau^G]=L[y]$ and hence $y\notin L$. 
Since the Jensen real is minimal for reals, $L[x]=L[y]$. 
Since $\tau^G\in L[\sigma^G]$, we have $L[\sigma^G]=L[x]$. 
\end{proof} 
This completes the proof. 
\end{proof} 

%It turns out that no large cardinal assumption is necessary for the existence of lost melodies for parameter-$OTM$s:
%
%\begin{thm}{\label{genericlostmelody}}
%There is a class-generic extension $L[G]$ of $L$ which contains a lost melody for parameter-$OTM$s.
%\end{thm}
%\begin{proof}
%Nach einem Ergebnis von Jensen (und evtl David)
%gibt es eine Klassenforcing-Erweiterung $L[G]$ von $L$, in der es ein $Pi^1_2$
%singleton $x$ gibt, und $L$ und $L[G]$ haben die gleichen Kardinalzahlen. Dann
%ist $x$ erkennbar mit Parameter $omega_1$ in $L[G]$, da wir f�r jede reelle Zahl
%$y$ die $Pi^1_2$-Aussage in $L_{omega_1}[y]$ testen k�nnen, indem wir dort alle
%reellen Zahlen durchsuchen.
%(Siehe Theorem $1$ in \cite{Da}.)
%http://hal.univ-savoie.fr/docs/00/41/64/89/PDF/absolute.pdf
%\end{proof}
%
%\textbf{Question}: Can this also be achieved with a set forcing?

Theorem \ref{genericlostmelody} and the non-existence of lost melodies in $L$ show that it is undecidable in $\ZFC$ whether there are lost melodies for OTMs. 

We will consider recognizable sets in extensions by homogeneous forcings in the following proofs. 

\begin{definition} 
A forcing $\mathbb{P}$ is \emph{homogeneous} if for all conditions $p,q\in \mathbb{P}$, there is an automorphism $\pi\colon \mathbb{P}\rightarrow \mathbb{P}$ 
with $p\parallel \pi(q)$ (i.e. $p$ and $\pi(q)$ are compatible). 
\end{definition} 

We will use the fact that homogeneous forcings do not add recognizable sets. This was proved indepently in \cite{Da09} for Cohen forcing. 

\begin{lemma}\label{weakly homogeneous} 
We work in $\mathsf{ZF}$. 
Suppose that $\mathbb{P}$ is a homogenous forcing and $G$ is $\mathbb{P}$-generic over $V$. 
%$M\models\text{ZFC}$ a transitive model containing $\mathbb{P}$, $G$ a $\mathbb{P}$-generic filter over $M$ and $x\subseteq\omega$
%such that $x\in M[G]\setminus M$. 
Suppose that $\mu\in \mathrm{Ord}$ and $x$ is a recognizable subset of $\mu$ in $V[G]$. 
If $x$ is recognizable, then $x\in V$. 
\end{lemma}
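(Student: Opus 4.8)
The plan is to exploit the defining feature of homogeneous forcing: a sentence whose parameters all lie in the ground model is decided by the trivial condition. First I would record the standard fact, which goes through in $\mathsf{ZF}$, that if $\mathbb{P}$ is homogeneous and $\psi$ is a sentence whose parameters are all checked names $\check a$ (in particular ordinals), then either $1_{\mathbb{P}}\Vdash\psi$ or $1_{\mathbb{P}}\Vdash\neg\psi$. Indeed, an automorphism $\pi$ fixes every checked name and preserves the forcing relation, so $q\Vdash\neg\psi$ gives $\pi(q)\Vdash\neg\psi$; if some $p\Vdash\psi$ and some $q\Vdash\neg\psi$, homogeneity supplies $\pi$ with $p$ compatible with $\pi(q)$, and a common extension would force both $\psi$ and $\neg\psi$, a contradiction. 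Hence for such $\psi$ we have $V[G]\vDash\psi$ if and only if $1_{\mathbb{P}}\Vdash\psi$, uniformly in the generic $G$.

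Next I would put recognizability into this special form via Lemma \ref{2.3}. Since $x$ is recognizable in $V[G]$, fix (with $y=\emptyset$ in Lemma \ref{2.3}) a $\Sigma_1$-formula $\varphi$ and an ordinal $\beta$ so that, in $V[G]$, $x$ is the unique $z\subseteq\mu$ with $L[z]\vDash\varphi(z,\beta)$. For each $\xi<\mu$ let $\theta(\xi)$ be the sentence asserting that there is a unique $z\subseteq\mu$ with $L[z]\vDash\varphi(z,\beta)$ and that $\xi$ belongs to that unique $z$. The only parameters in $\theta(\xi)$ are the ordinals $\xi,\beta,\mu$, so $\theta(\check\xi)$ has the special form above. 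By the uniqueness clause, $V[G]\vDash\theta(\xi)$ if and only if $\xi\in x$, and combining this with the homogeneity equivalence gives $\xi\in x$ iff $1_{\mathbb{P}}\Vdash\theta(\check\xi)$. The latter condition is evaluated entirely in $V$, so
\[
x=\{\,\xi<\mu : 1_{\mathbb{P}}\Vdash\theta(\check\xi)\,\}\in V,
\]
which is exactly the conclusion.

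The step requiring genuine care is ensuring that everything runs in $\mathsf{ZF}$ and that the recognizing statement truly carries only ordinal parameters. The homogeneity lemma as argued above uses only that automorphisms fix checked names and preserve forcing, both choice-free. The appeal to Lemma \ref{2.3} is more delicate: its proof passes through a successor cardinal $|\tau|^{+}$, which in $\mathsf{ZF}$ must be read as a Hartogs number, and what one really needs is that any halting OTM-computation on an input $z\subseteq\mu$ with ordinal parameter $\beta$ is a set coded below some such ordinal, so that ``$P^{z}(\beta)\!\downarrow\,=1$'' is genuinely $\Sigma_1$ and absolute between transitive models containing $z$. This is what legitimizes expressing recognizability by $\varphi(z,\beta)$ with no hidden dependence on $G$; once that is secured, the decomposition $x=\{\xi : 1_{\mathbb{P}}\Vdash\theta(\check\xi)\}$ is immediate and the argument relativizes verbatim to any ground-model oracle.
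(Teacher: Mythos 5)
Your proof is correct, but it takes a genuinely different route from the paper's. The paper argues by contradiction: assuming $x\notin V$, it finds $\alpha<\mu$ and conditions $q,r$ below $p$ with $q\Vdash\dot{x}(\alpha)=0$ and $r\Vdash\dot{x}(\alpha)=1$, applies an automorphism to make $q$ and $\pi(r)$ compatible, and then observes that in a generic extension below a common extension the two sets $\dot{x}^G$ and $\pi(\dot{x})^G$ are distinct yet both recognized by the same program and parameter, violating uniqueness. You instead run the classical ordinal-definability argument: homogeneity plus the fact that automorphisms fix checked names shows that $1_{\mathbb{P}}$ decides every sentence with only ordinal parameters, and the $\Sigma_1$-characterization of recognizability (Lemma \ref{2.3}, forward direction only --- so the Hartogs-number worry you raise about $|\tau|^+$ concerns the converse and is not actually needed) turns ``$\xi\in x$'' into exactly such a sentence, yielding the explicit ground-model definition $x=\{\xi<\mu: 1_{\mathbb{P}}\Vdash\theta(\check\xi)\}$. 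Both arguments rest on the same automorphism mechanics, but yours is a positive definability argument that in fact proves the stronger general statement that every set of ordinals that is ordinal-definable in a homogeneous extension already lies in $V$, with recognizability entering only to certify ordinal-definability; the paper's is more self-contained, avoiding any appeal to the definability of the forcing relation, and exhibits the failure of uniqueness directly. One small simplification available to you: you could bypass Lemma \ref{2.3} entirely and let $\theta(\xi)$ say ``there is a unique $z\subseteq\mu$ with $P^{z}(\beta)\downarrow=1$, and $\xi$ lies in it,'' since the program code is a natural number and hence already a checked parameter.
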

\begin{proof}
Suppose that $p\in \mathbb{P}$ forces that $x$ is recognized by a program $P$ from some ordinal $\gamma$. 
Further suppose that $x\notin V$ and that $\dot{x}$ is a $\mathbb{P}$-name for $x$. 
Then $p$ does not decide $\dot{x}$. 
Then there is some $\alpha<\mu$ and conditions $q,r\leq p$ such that $q\Vdash \dot{x}(\alpha)=0$ and $r\Vdash \dot{x}(\alpha)=1$. 
Let $\pi$ be an automorphism of $\mathbb{P}$ such that $q\parallel \pi(r)$
and suppose that $s\leq \pi(q),r$. 

Now suppose that $G$ is $\mathbb{P}$-generic over $V$ with $s\in G$. %Let $H=\pi[G]$. 
%Then $\pi(r)\Vdash \pi(\dot{x})(\alpha)=1$. 
Since $s$ forces that $\dot{x}$ is recognized by $P$ in the parameter $\gamma$, we have 
$q\Vdash{P}^{\dot{x}}({\gamma})\downarrow $ and $\pi(r)\Vdash {P}^{\dot{x}}({\gamma})\downarrow$. 
Since $r$ forces that $\dot{x}$ is recognized by $P$ in the parameter $\gamma$, we have $\pi(r)\Vdash {P}^{\pi(\dot{x})}({\gamma})\downarrow$. 
We have $q\Vdash \dot{x}(\alpha)=0$ and $\pi(r)\Vdash \pi(\dot{x})(\alpha)=1$. 
Let $x=\dot{x}^{G}$ and $y=\pi(\dot{x})^G$. 
We work in $V[G]$. %=V[\pi^{-1}[G]]$. 
Since $q\in G$, $P$ recognizes $x$ from $\gamma$. 
Since $\pi(r)\in G$, $P$ recognizes $y$ from $\gamma$. 
However $x\neq y$, contradicting the uniqueness of $x$. 
%Then in $V[G]=V[H]$ both $P^x(\gamma)$ and $P^y(\gamma)$ but $x\neq y$, contradicting the uniqueness of $x$. 
\end{proof} 

We can now show that it is consistent with ZFC that relativized recognizability is not transitive. 

\begin{lemma}\label{nontrans} 
Assume that $V=L[0^{\sharp}]$. Then there are real numbers $x,y,z$ such that $x$ is recognizable from $y$, $y$ is recognizable from $z$, and $x$ is not recognizable from $z$. 
\end{lemma}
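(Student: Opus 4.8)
The plan is to witness the failure of transitivity with $y=0^{\sharp}$ and $z=\emptyset$, thereby reducing the entire statement to the existence of a single non-recognizable real. First I would record two easy facts. Since $V=L[0^{\sharp}]$, the real $0^{\sharp}$ is recognizable from $\emptyset$ by Lemma \ref{0sharprecog}, so $y=0^{\sharp}$ is recognizable from $z=\emptyset$. Moreover every real $x$ lies in $L[0^{\sharp}]$, hence is OTM-computable from the oracle $0^{\sharp}$ together with an ordinal parameter by Theorem \ref{relOTM}; running this computation and comparing its output with the input shows that every real is recognizable from $y=0^{\sharp}$. Finally, recognizability from $z=\emptyset$ is by definition just recognizability. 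Thus it suffices to produce a real $x$ that is \emph{not} recognizable: then $x$ is recognizable from $y$, $y$ is recognizable from $z$, but $x$ is not recognizable from $z$.

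To build such an $x$ I would pass to a Cohen real over $L$. Since $0^{\sharp}$ exists, $\omega_1^L$ is countable in $V$, so $\mathbb{R}\cap L$ has size $\aleph_1^L$ and is therefore countable in $V=L[0^{\sharp}]$. Consequently the union of all meager Borel sets coded by a real of $L$ is a countable union of meager sets, hence meager in $V$; by the Baire category theorem there is a real $c\in V$ lying outside this union, that is, a real Cohen-generic over $L$. As $c$ avoids every $L$-coded singleton, $c\notin L$. Set $x=c$.

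It remains to show that $c$ is not recognizable in $V$. Applying Lemma \ref{weakly homogeneous} inside the generic extension $L[c]$ of $L$ by the homogeneous forcing $\mathrm{Add}(\omega,1)$, every real recognizable in $L[c]$ already belongs to $L$; since $c\notin L$, the real $c$ is not recognizable in $L[c]$. Now I would use that recognizability descends from $V$ to the inner model $L[c]\subseteq V$: by the $\Sigma_1$-characterization of Lemma \ref{2.3}, $c$ is recognizable in $V$ exactly when $c$ is the unique $z$ with $L[z]\models\varphi(z,\beta)$ for some $\Sigma_1$ formula $\varphi$ and ordinal $\beta$, and the relation $L[z]\models\varphi(z,\beta)$ is absolute; uniqueness over $V$ is stronger than uniqueness over the submodel $L[c]$, so the same $\varphi,\beta$ would witness recognizability of $c$ in $L[c]$. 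This contradicts the previous step, so $c$ is not recognizable in $V$, completing the argument.

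The main obstacle is this last step, the downward absoluteness of recognizability from $V$ to $L[c]$: I must check carefully that the uniqueness clause in Lemma \ref{2.3}, which quantifies over all reals of the ambient model, behaves correctly when the model shrinks, and that Lemma \ref{weakly homogeneous} (proved in $\mathsf{ZF}$) may legitimately be applied with $L$ as ground model and $c$ as generic. A secondary point is that the Baire category construction lands inside $L[0^{\sharp}]$, which is immediate here since the argument is carried out in $V=L[0^{\sharp}]$ itself. I note that the resulting triple is consistent with Lemma \ref{twostep}: the chain $x,0^{\sharp},\emptyset$ has length two, and $0^{\sharp}$ is a recognizable real from which $x$ is computable.
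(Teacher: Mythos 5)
Your proof is correct and rests on the same two pillars as the paper's: a Cohen real over $L$ exists in $V=L[0^{\sharp}]$ and is not recognizable by the homogeneity lemma, while $0^{\sharp}$ is recognizable by Lemma \ref{0sharprecog}. The assembly differs slightly, and yours is arguably cleaner: the paper routes through an intermediate real $c$ coding a level $L_{\delta}[0^{\sharp}]$, producing a chain $x\to c\to 0^{\sharp}\to 0$ from which the failure of transitivity is extracted somewhat indirectly, whereas you observe that every real of $V=L[0^{\sharp}]$ is computable, hence recognizable, from the oracle $0^{\sharp}$ with an ordinal parameter, which pins down the explicit failing triple $(x,0^{\sharp},\emptyset)$ in one step. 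You are also more careful than the paper on the one point that genuinely needs care: Lemma \ref{weakly homogeneous} is applied with ground model $L$ and only yields non-recognizability of $c$ in $L[c]$, so one must note that recognizability is downward absolute from $V$ to $L[c]$ (the program still halts correctly on the smaller set of inputs, or equivalently uniqueness of the $\Sigma_1$ solution over $V$ implies uniqueness over the submodel); your treatment of this transfer is right.
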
 
\begin{proof} 
Suppose that $\gamma$ is the least ordinal with $\gamma>\omega_{1}^{L}$ such that $L_{\gamma+1}[0^{\sharp}]\setminus L_{\gamma}[0^{\sharp}]$ contains a real. Let $\delta:=\gamma+3$. 
There is a Cohen real $x$ over $L$ in $L_\delta$ and this is not recognizable by Lemma \ref{weakly homogeneous}. 
%Then there is a Cohen real over $L$ 
%By our argument above for Lemma \ref{sharpscreateunrecog}, $L_{\gamma+3}[0^{\sharp}]$ contains an unrecognizable real number $x$ (take any $x$ Cohen-generic over $L$).
Let $c$ denote the $L[0^{\#}]$-least real in $L[0^{\#}]$ which codes $L_\delta$. 
%In $L[0^{\sharp}]$, the structure $L_{\gamma+3}[0^{\sharp}]$ is countable and therefore has a $<_{L[0^{\sharp}]}$-minimal real code $c$. From $c$, it is easy to compute - and hence recognize - $x$.
Then $x$ is recognizable from $c$. 

On the other hand $c$ is recognizable from $0^{\sharp}$ in the parameter $\delta$. 
%$\alpha:=\gamma+3$: To see in the parameter $\alpha$, given $0^{\sharp}$, whether the oracle $z$ is equal to $c$, check whether $c$ codes
To determine whether a real $z$ is equal to $c$, we first check whether $z$ codes $L_{\alpha}[0^{\sharp}]$. 
In this case, we compute from $z$ a code $d$ for $L_{\delta+1}[0^{\sharp}]$ and determine whether $z$ is equal to the least element of the structure coded by $d$ which codes $L_\delta[0^{\#}]$. 
%and return $0$ if not; otherwise, compute from $z$ a code $c^{\prime}$ for $L_{\alpha+1}[0^{\sharp}]$ (which will contain $c$) and check whether $z$ is coded in $c^{\prime}$ and whether it is 
%the $<_{L[0^{\sharp}]}$-minimal element of the coded structure that codes $L_{\alpha}[0^{\sharp}]$. 
Moreover $0^{\sharp}$ is recognizable from $0$ by Lemma \ref{0sharprecog}. 
Since $x$ is not recognizable from $0$, this is a counterexample to transitivity. 
%So $x$ is recognizable from $c$ and $c$ is recognizable from $0$, but, as $x$ is not recognizable, $x$ is not recognizable from $0$, contradicting transitivity. 
\end{proof} 

%It follows from Lemma \ref{nontrans} that it is consistent that relativized recognizability with ordinal parameters is not transitive, assuming that $0^{\#}$ exists. 

%%%%%%%%%%%%%%%%%%%%%%%%%%%%%%%%%%%%%%%%%%%%%%%%%%%
%%%%%%%%%%%%%%%%%%%%%%%%%%%%%%%%%%%%%%%%%%%%%%%%%%%
\subsection{The recognizable closure and $\mathrm{HOD}$} 
In this section, we show that every set of ordinals which is generic over $L$ can be coded into the recognizable closure $\CC$ in a further generic extension. 
The recognizable closure is contained in $\HOD$. We further show that there is a generic extension of $L$ in which the recognizable closure is not equal to $\HOD$. 

\begin{lemma} 
Suppose that $G$ is $\mathbb{P}$-generic over $\HOD$ and $X\in L[G]$ is a set of ordinals. 
Then there is a cofinality-preserving forcing $\mathbb{Q}$ in $L[G]$ such that for every $\mathbb{Q}$-generic filter $H$ over $L[G]$, $X$ is computable from a recognizable set in $L[G,H]$. 
\end{lemma}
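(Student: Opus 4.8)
The plan is to code $X$ into a single recognizable set by a cofinality-preserving coding forcing, following the template of Lemma \ref{nontrans}: the recognizable set will be a canonical code for a canonical initial segment of the construction, with recognizability witnessed by an ordinal parameter together with a $\Sigma_1$ verification as in Lemma \ref{2.3}. First I would reduce to the case that $X$ is a subset of a single cardinal $\lambda$ of $L[G]$; since $\CC$ is closed under computability and joins, it suffices to produce one recognizable $Y$ with $X\leq_{\text{OTM}} Y$. I would also force $\mathrm{GCH}$ over $L[G]$ by a cofinality-preserving iteration if needed, so that the subsequent coding is cofinality-preserving. Here the hypothesis that $G$ is $\mathbb{P}$-generic over $\HOD$ is used: since $\HOD\models V=\HOD$ carries a definable global wellorder, $L[G]$ retains a canonical, ordinal-definable ground over which ``the $<$-least code'' is meaningful and absolutely definable, which is what makes the eventual $\Sigma_1$ description of $Y$ rigid.

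Second, I would let $\mathbb{Q}\in L[G]$ be an almost-disjoint (Jensen-style) coding that codes $X$, together with the coherence information needed to reconstruct the construction, downward into a set $Y\subseteq\theta$ for a suitable cardinal $\theta$ (a real if $\lambda=\omega$). Fixing in $L$ a canonical almost-disjoint system $\langle A_\alpha\rangle$, the forcing is arranged so that: (a) for each $\alpha$, membership $\alpha\in X$ is recovered from $Y$ by the size of $Y\cap A_\alpha$, whence $X\leq_{\text{OTM}} Y$ uniformly in the $L$-definable parameters of the system; (b) $\mathbb{Q}$ is cofinality-preserving (the standard almost-disjoint coding is even cardinal-preserving under $\mathrm{GCH}$); and (c) in $L[Y]$ one can reconstruct, from the canonical system in $L$, the coherent sequence that $Y$ encodes, together with an initial segment $M$ of the coding construction carrying the prescribed pattern up to an ordinal parameter $\beta$.

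Third, I would verify recognizability of $Y$ through Lemma \ref{2.3}: the $\Sigma_1$ formula $\varphi(z,\beta)$ asserts that $z$ decodes, via the fixed system, to a wellfounded coherent model $M$ following the canonical construction with the correct pattern below $\beta$, and that $z$ is the canonical ($<_M$-least) such code inside $M$. Wellfoundedness, coherence, and the pattern are all $\Sigma_1$-detectable in $L[z]$, and the decoding recovers $X$, giving $X\leq_{\text{OTM}} Y$ in $L[G,H]$. The main obstacle is precisely the \emph{uniqueness} clause demanded by recognizability: a bare coding generic is far from unique, so I must code into an \emph{absolute}, $\Sigma_1$-detectable invariant (the cardinal/cofinality pattern, faithfully reflected because $\mathbb{Q}$ preserves cofinalities) and impose a canonical-leastness condition inside the reconstructed model, so that any competitor $z\subseteq\theta$ existing in $V=L[G,H]$ and satisfying $\varphi(z,\beta)$ must encode the same pattern and hence coincide with $Y$. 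Controlling this interaction between the self-coding that pins $Y$ down and the requirement that the coding preserve cofinalities is the technical crux; the remainder is the routine bookkeeping of almost-disjoint coding.
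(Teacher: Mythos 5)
Your overall plan (a cofinality-preserving coding of $X$ into a single set $Y$, with recognizability checked via Lemma \ref{2.3}) matches the paper's in outline, but the mechanism you propose for the one step you yourself identify as the crux --- making the coding set \emph{unique} --- does not work, and this is exactly where the paper's proof does something different. A plain almost-disjoint coding generic $Y$ is not recognizable: the property ``$z$ decodes to $X$ via the fixed almost-disjoint system'' is satisfied by many subsets of $\theta$ in $L[G,H]$, not just by $Y$. Your two proposed remedies do not repair this. Taking the ``canonical ($<_M$-least) such code inside the reconstructed model'' changes the object: the least code of the decoded structure is some other set, not the generic you forced with, and you would then have to prove that \emph{this} set is recognizable, which is the original problem again. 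Worse, coding into ``the cardinal/cofinality pattern'' is ruled out by fiat: a cofinality-preserving $\mathbb{Q}$ cannot alter the cofinality function, so that invariant carries no information about $X$.

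The paper resolves the uniqueness problem by coding into a different, genuinely forceable invariant of the extension: whether a given successor cardinal $\nu\geq\lambda=|\mathbb{P}|^{+}$ carries a Cohen subset over $L$. (This is where the hypothesis on $\mathbb{P}$ is actually used --- $\mathbb{P}$ adds no Cohen subset to any regular $\nu\geq\lambda$, so only the deliberate coding contributes to this pattern; it is not used to extract least codes from a global wellorder of $\HOD$.) Since the block of the generic coding $X$ is itself a generic object and hence a priori not pinned down, the construction is iterated $\omega$ times with full support, each $X_{n+1}$ coding $X_n$ at the next block of cardinals, so that every block of $Y=\bigcup_n X_n$ is determined by the Cohen-subset pattern at later cardinals --- an invariant of the model $L[G,H]$ rather than of a particular generic filter. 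Any $\bar Y$ consistent with the coding must then agree with $Y$ block by block, which is the uniqueness recognizability demands. If you insist on an almost-disjoint-coding route, you would need the full Jensen--David localization machinery for making the coding set definable, a much heavier tool than the ``routine bookkeeping'' you invoke and not what the paper does.
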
 
\begin{proof} 
Let $\lambda=|\mathbb{P}|^{+}$. % is a cardinal in $L$ and $|\mathbb{P}|<\lambda$ in $L$. 
Then $\mathbb{P}$ adds no Cohen subset to any regular cardinal $\nu\geq\lambda$, since otherwise some $p\in\mathbb{P}$ decides unboundedly many $\alpha<\nu$. 
Suppose that $\mu$ is a cardinal with $X\subseteq \mu$. 

The forcing $\mathbb{Q}\in L[G]$ is an iteration with full support. 
We define the iteration separately in the intervals between the ordinals in a strictly increasing sequence $\langle \mu_n\mid n\in\omega\rangle$ of length $\omega$ which is defined as follows. 

%If $0\in X$, then we add a Cohen subset to $[\lambda,\lambda^+)$. 
Let $\mu_0=\mu$. In the first interval $[0,\mu_0)$, we add a Cohen subset to $[\lambda^{+(2\cdot \alpha)},\lambda^{+(2\cdot\alpha+1)})$ if $\alpha\in X_0:=X$ and a Cohen subset to $[\lambda^{+(2\cdot\alpha+1)},\lambda^{+(2\cdot\alpha+2)})$ if $\alpha\notin X_0$ for $\alpha<\mu_0$. This iteration of length $\mu_0$ adds a subset $X_1$ of $\mu_1:=\lambda^{+\mu}$. 
We similarly define $\mu_{n+1}$ from $\mu_n$ and  $X_{n+1}$ from $X_n$ in the interval $[\mu_n,\mu_{n+1})$ for all $n$. 
Let $Y=\bigcup_{n\in\omega} X_n$ and 
let $H$ denote the generic filter over $L[G]$ defined by this sequence. Then $L[G,Y]=L[G,H]$. 

This iteration adds Cohen subsets over $L$ only to the successor cardinals $\nu\geq\lambda$ specified in the iteration. 

\begin{claim*} 
$Y$ is recognizable in $L[G,H]$. 
\end{claim*} 
\begin{proof} 
Suppose that $\bar{Y}$ is a subset of $\sup_{n\in\omega} \mu_n$. 
We can determine in $L[\bar{Y}]$ whether $\bar{Y}$ is consistent with the coding described above. 
%This defines the procedure to recognize $Y$. 
Suppose that $\bar{Y}$ is consistent with the coding. 
Then $\bar{Y}\cap [\mu_n,\mu_{n+1})=Y\cap [\mu_n,\mu_{n+1})$, since this set is determined by the set of successor cardinals $\nu\geq \mu_{n+1}$ such that there is a Cohen subset of $\nu$ over $L$. 
%This follows from the fact that the sequence determines the cardinals at which there are Cohen subsets over $L$. 
Hence $\bar{Y}= Y$. 
%If $\bar{Y}$ is recognized by the procedure 
%Then $\bar{Y}$ is consistent with the coding. 
%Given $\bar{X}$, we check in $L[\bar{X}]$ whether $\bar{X}$ is consistent with the coding described above. 
%If $\bar{X}\neq X$, then there is some Cohen subset over $L$ in $L[\bar{X}]$ in an interval $[\mu,\mu^+)$ which is not consistent with $X$, therefore there is not such Cohen subset over $L$ in the extension, contradicting the definition of $H$. 
\end{proof} 
This completes the proof. 
\end{proof} 

%It follows that for any cardinal $\kappa$ in $L$, there is a cofinality preserving extension of $L$ such that the recognizable closure has size at least $\kappa$. 
%Note that the recognizable closure is contained in $HOD$, hence it os consistent that it is not equal to $V$. 

The next lemma shows that it is consistent that the recognizable closure is strictly contained in $\HOD$. Note that every set of ordinals in $\CC$ is $\Delta_1$-definable from an ordinal. 

\begin{lemma} 
Let $\mathbb{P}$ denote Cohen forcing and suppose that $\dot{x}$ is a $\mathbb{P}$-name for the $\mathbb{P}$-generic real. 
Suppose that $\dot{\mathbb{Q}}$ is a $\mathbb{P}$-name for the finite support product $\prod_{n\in x} \mathrm{Add}(\omega_n,1)$, where $x$ is the interpretation of $\dot{x}$. % is the $\mathbb{P}$-generic subset of $\omega$. 
Suppose that $x$ is $\mathbb{P}$-generic over $L$ and that $G$ is $\dot{\mathbb{Q}}^x$-generic over $L[x]$. 
Then in $L[x,G]$, $x$ is $\Sigma_1$-definable from an ordinal, but not $\Pi_1$-definable from an ordinal. 
Hence $\CC\subsetneq HOD$ in $L[x,G]$. 
%$\CC^{L[x,G]}\subsetneq \HOD^{L[x,G]}$. 
\end{lemma}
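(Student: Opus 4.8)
The plan is to prove the two definability claims separately and then read off $\CC\subsetneq\HOD$ from them. For the conclusion, recall from the remark preceding this lemma that every set of ordinals in $\CC$ is $\Delta_1$-definable from an ordinal, and that $\CC\subseteq\HOD$. Hence it suffices to show (A) that $x$ is $\Sigma_1$-definable from an ordinal and (B) that $x$ is \emph{not} $\Pi_1$-definable from an ordinal: then (A) gives $x\in\HOD$ (an ordinal-definable set of natural numbers, all of whose elements are ordinals), while (B) shows $x$ is not $\Delta_1$-definable from an ordinal and hence $x\notin\CC$; together with $\CC\subseteq\HOD$ this yields $\CC\subsetneq\HOD$ in $L[x,G]$.

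For (A) I would use the coding built into $\dot{\mathbb Q}$: the point is that in $L[x,G]$ one has, for $n\geq 1$, that $n\in x$ if and only if there is a subset of $\omega_n$ that is $\mathrm{Add}(\omega_n,1)$-generic over $L$. ``There is such a Cohen subset'' is $\Sigma_1$ (assert the existence of $A\subseteq\omega_n$ and of a level $L_\theta$ with $\theta>(\omega_n^{+})^{L}$ such that $A$ meets every dense subset of $\mathrm{Add}(\omega_n,1)$ lying in $L_\theta$), so this gives a $\Sigma_1$ definition of $x$ from the ordinal $\omega_\omega$. The forward direction is immediate from the relevant coordinate of $G$. The substantial direction is, for $m\geq 1$ with $m\notin x$, that $L[x,G]$ contains \emph{no} Cohen subset of $\omega_m$ over $L$. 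Here I would factor, in $L[x]$, the product as $\mathbb Q^{x}=\mathbb Q_{<m}\times\mathbb Q_{>m}$ (legitimate since $m\notin x$ and the support is finite), so that $\mathbb P*\dot{\mathbb Q}\cong(\mathbb P*\dot{\mathbb Q}_{<m})*\dot{\mathbb Q}_{>m}$, and force the two pieces in this order, writing $W_1$ for the $(\mathbb P*\dot{\mathbb Q}_{<m})$-extension. The first piece has size $<\omega_m$, so by the cardinality argument used in the proof of the previous lemma (a forcing of size $<\nu$ adds no $\mathrm{Add}(\nu,1)$-generic subset of a regular $\nu$) it adds no Cohen subset of $\omega_m$ over $L$.

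The hard part, and the main obstacle, is to show that the tail $\mathbb Q_{>m}=\prod_{n\in x,\ n>m}^{\mathrm{fin}}\mathrm{Add}(\omega_n,1)$ adds no Cohen subset of $\omega_m$ over $L$ either. This is delicate precisely because $\mathbb Q_{>m}$ has \emph{finite} support over a possibly infinite index set, so it is \emph{not} $<\omega_{m+1}$-closed (supports may grow without bound along an $\omega$-sequence of conditions), and one cannot simply invoke closure to conclude that no new subset of $\omega_m$ appears; indeed a finite-support product of infinitely many nontrivial forcings adds a Cohen real, i.e.\ a new subset of $\omega$. My plan is to show that the only new subsets of $\omega_m$ produced by $\mathbb Q_{>m}$ arise from this Cohen real: each factor is $<\omega_n$-closed and hence $<\omega_{m+1}$-closed, so it adds no new bounded-below-$\omega_m$ information, and the finite-support interaction contributes only an $\omega$-pattern. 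Concretely I would argue that every subset of $\omega_m$ in the $\mathbb Q_{>m}$-extension over $W_1$ already lies in an extension of the form $W_1[c]$ with $c$ a single Cohen real, and then apply the size-$<\omega_m$ argument to $c$ to conclude that none of these is $\mathrm{Add}(\omega_m,1)$-generic over $L$. Isolating this ``closed part times a Cohen real'' decomposition of the finite-support product is the crux of the proof.

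For (B) I would exploit the asymmetry between $\Sigma_1$ and $\Pi_1$ inherent in the coding: ``$\omega_m$ has a Cohen subset over $L$'' is a $\Sigma_1$ event that, once true, is preserved by all further forcing, whereas its negation is $\Pi_1$ and can be \emph{falsified} by a reals-preserving forcing. Suppose toward a contradiction that $x$ were $\Pi_1$-definable from an ordinal $\gamma$; then $\omega\setminus x$ would be $\Sigma_1(\gamma)$, so for $m\notin x$ the $\Sigma_1(\gamma)$-statement certifying $m\notin x$ holds in $L[x,G]$ and, there, is equivalent to the $\Pi_1$-statement ``$\omega_m$ has no Cohen subset over $L$''. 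I would then force with $\mathrm{Add}(\omega_m,1)$ for such an $m$; since $m\geq 1$ this forcing is countably closed and adds no reals, so $x$, $\gamma$ and the entire membership pattern are unchanged, yet a Cohen subset of $\omega_m$ over $L$ now exists. The $\Sigma_1(\gamma)$ certificate for $m\notin x$ persists upward while the equivalent $\Pi_1$ assertion fails, and combined with the $\Sigma_1$-coding of (A) this forces the $\Pi_1$ definition to misclassify $m$ in the no-new-reals extension, contradicting that it still defines the unchanged real $x$. Making this last absoluteness step precise is the one point in (B) requiring care, and I would use the homogeneity of $\mathrm{Add}(\omega_m,1)$ together with Lemma \ref{weakly homogeneous} to pin it down.
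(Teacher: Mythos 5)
Your part (A) rests on a claim that is in fact false. The finite-support product $\prod^{\mathrm{fin}}_{n\in x,\,n>m}\mathrm{Add}(\omega_n,1)$ does not merely add ``an $\omega$-pattern'': it collapses $\omega_1^L$ and indeed every $\omega_k^L$. For any $\alpha<\omega_k^L$ and any condition $q$, one can pick a fresh coordinate $n\in x$ with $n>k$ outside the (finite) support of $q$ and extend $q$ at that coordinate by a condition that is $0$ on $[0,\alpha]$ and $1$ at $\alpha+1$; hence the map $n\mapsto\min G_n$ is forced to be cofinal in $\omega_k^L$, so $\mathrm{cf}(\omega_k^L)=\omega$ in $L[x,G]$. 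This immediately refutes your proposed decomposition (a cofinal $\omega$-sequence in $\omega_1^L$ cannot live in $W_1[c]$ for a single Cohen real $c$, since that extension preserves $\omega_1$), and it also destroys the coding you want for the $\Sigma_1$ definition: once $\omega_{m+1}^L$ is countable in $L[x,G]$, the forcing $\mathrm{Add}(\omega_m,1)^L$ and the family of its dense subsets lying in $L$ are both countable there, so by the Rasiowa--Sikorski argument $L[x,G]$ contains an $\mathrm{Add}(\omega_m,1)^L$-generic over $L$ for \emph{every} $m$, including $m\notin x$. So ``there is a Cohen subset of $\omega_n$ over $L$'' does not define $x$. (For what it is worth, the paper's own proof is silent on the $\Sigma_1$ claim and only establishes the $\Pi_1$ direction.)

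For part (B) you have the right raw material --- for $m\notin x$, adjoining an $\mathrm{Add}(\omega_m,1)$-generic produces a model of the same shape with $m$ moved into the generic real, and $\Sigma_1$ facts persist upward --- but your argument does not close. The hypothesis is only that $\varphi(\cdot,\gamma)$ defines $x$ \emph{in $L[x,G]$}; nothing forces $\varphi$ to define anything in $L[x,G][H]$, so ``the $\Pi_1$ definition misclassifies $m$'' in the larger model is not a contradiction, and Lemma \ref{weakly homogeneous} (which concerns recognizability) does not supply the missing step. The paper closes this gap through the forcing relation: it fixes a condition $(p,\dot q)$ forcing $\forall n\,(n\in\dot x\leftrightarrow\varphi(n,\delta))$ with $p$ deciding $\mathrm{supp}(\dot q)\subseteq s$ for a finite $s$, picks $n\notin x\cup s$, and exhibits $L[x,G][G_n]$ as $L[y,H]$ where $y=x\cup\{n\}$ and $(y,H)$ is again $\mathbb{P}*\dot{\mathbb{Q}}$-generic \emph{below $(p,\dot q)$} (using coordinatewise automorphisms of the $\mathrm{Add}(\omega_i,1)$ to make the rearranged generic compatible with $\dot q^y$). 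Then the forced equivalence applies to $L[y,H]$: since $n\in y$, $\varphi(n,\delta)$ must hold there, while $\neg\varphi(n,\delta)$ holds in $L[x,G]$ and is $\Sigma_1$, hence persists to $L[y,H]$ --- a genuine contradiction. You should recast your (B) in this form; note also that, so recast, it does not depend on (A) at all.
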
 
\begin{proof} 
Suppose that $\varphi$ is a $\Pi_1$-formula, $\delta\in \mathrm{Ord}$ and $(p,\dot{q})$ is a condition which forces $\forall n (n \in \dot{x}\Leftrightarrow  \varphi(n,\delta))$ % and $\forall n\notin \dot{x}\ \varphi(n,\delta)$. 
Suppose that $s\subseteq\omega$ is finite and $\mathrm{dom}(p)\subseteq s$. 
We can assume that $p\Vdash \mathrm{supp}(\dot{q})\subseteq s$. 

Suppose that $(x,G)$ is $\mathbb{P}*\dot{\mathbb{Q}}$-generic over $L$ below $(p,\dot{q})$ and $G=\prod_{i\in x} G_i$. 
Suppose that $n\in \omega\setminus (x\cup s)$ and $y=x\cup \{n\}$. 
Suppose that $G_n$ is $\mathrm{Add}(\omega_n,1)$-generic over $L[x,G]$. 
Suppose that $\dot{q}^x$, $\dot{q}^y$ are given by the sequences $\langle q_i^x\mid i\in s\cap x\rangle$ and $\langle q_i^y\mid i\in s\cap x\rangle$, where $q_i^x, q_i^y\in \mathrm{Add}(\omega_i,1)$. 
%Suppose that $\bar{q}_i^x\leq q_i^x$ with $\bar{q}_i^x\in G_i$ and $|\mathrm{dom}(\bar{q}_i^x)|\geq$, where $G=\prod_{i\in\omega} G_i$. 
Suppose that $\pi_i\colon \mathrm{Add}(\omega_i,1)\rightarrow \mathrm{Add}(\omega_i,1)$ is an automorphism such that $\pi_i(p_i)$ is compatible with $q_i$ for all $i\in s\cap x$. 
Let $H$ denote the $\dot{\mathbb{Q}}^y$-generic filter over $L[y]$ which is equivalent to $\prod_{i\in s\cap x}\pi[G_i]\times \prod_{i\in x\setminus s}G_i \times G_n$ modulo the order of the indices in the product. 
Then $(y,H)$ is $\mathbb{P}*\dot{\mathbb{Q}}$-generic over $L$ below $(p,\dot{q})$ and $L[x,G,G_n]=L[y,H]$. 
Then $\varphi(n,\delta)$ holds in $L[y,H]$ by $\Sigma_1$ upwards absoluteness, contradicting the assumption on $(p,\dot{q})$. 
\end{proof} 

%\begin{question} 
%\todo{try to show this: suppose that $\sigma$ is a name for a new element of $C$...} In the model above, is $C=L$? 
%\end{question} 

\begin{remark} 
Suppose that $\langle \kappa_i \mid i<\omega \rangle$ is a strictly increasing sequence of measurable cardinals and $\vec{\mu}=\langle \mu_i\mid i<\omega\rangle$ is a sequence of normal ultrafilters with $\mathrm{crit}(\mu_i)=\kappa_i$ for all $i$. 
Suppose that $V=L[\vec{\mu}]$. 
Then $\vec{\mu}$ is not coded by any set in $\CC$. 
In particular $\CC\subsetneq HOD$. 
%In particular, it is consistent that the class of sets coded by sets in $\CC$ is not a model of the power set axiom, the collection scheme, the replacement scheme and the separation scheme. 
\end{remark} 
\begin{proof} 
As in Lemma \ref{non-recognizable structure} below.  
\end{proof} 

\begin{question} 
Is it consistent that $\CC=L$ and $\CC\subsetneq\HOD$? %or even $\CC\neq\Delta_1$-$\HOD$?
\end{question}

%%%%%%%%%%%%%%%%%%%%%%%%%%%%%%%%%%%%%%%%%%%%%%%%%%%
%%%%%%%%%%%%%%%%%%%%%%%%%%%%%%%%%%%%%%%%%%%%%%%%%%%
\section{Recognizable sets and $M_1$}

We aim to define the recognizable closure for subsets of an ordinal $\alpha$ from 
%show that there is a relationship between the recognizable closure and 
$M^{\infty}$. 
$M^{\infty}$ is defined as follows, assuming that $M_1^{\#}$ exists. 
$M_1^{\#}$ is defined in \cite[Section 5.1]{Schimmerling} and \cite[page 1660]{Ste}. 
We work with premice of the form $M=(J_\alpha,\in,\vec{E})$ as defined in \cite{MiSte}. 

\begin{definition} 
$M_1^{\#}(x)$ denotes a sound $\omega_1$-iterable $x$-premouse $M$ with projectum $\omega$ and with an external measure with critical point $\kappa$ above a Woodin cardinal in $M$, if this exists. 
\end{definition} 

It is known that $M_1^{\#}$ is unique if it is $\omega_1+1$-iterable or if $M_1^{\#}(z)$ exists for every real $z$ \cite[Lemma 2.41]{Schlicht}. In the following we will make one of these assumptions. 

\begin{definition} 
\begin{enumerate-(i)} 
%\item 
%$M_1^{\#}(x)$ denotes a sound $\omega_1$-iterable $x$-premouse $M$ with projectum $\omega$ and with an external measure with critical point $\kappa$ above a Woodin cardinal in $M$, if this exists. 
\item 
$(M_1^{\#})^{\alpha}$ denotes the $\alpha$-th iterate of $M_1^{\#}$ by the external measure and $\kappa_\alpha$ the image of $\kappa_0=\kappa$ in $(M_1^{\#})^{\alpha}$. 
\item 
$M_1:=\bigcup_{\alpha\in \mathrm{Ord}} (M_1^{\#})^{\alpha} |\kappa_\alpha$. 
\item 
$M^{\alpha}$ denotes the $\alpha$-th iterate of $M_1^{\#}$ by the unique normal measure on its least measurable cardinal $\mu$ and $\mu_\alpha$ the image of $\mu_0=\mu$ in $M^{\alpha}$. 
\item 
$M^{\infty}:=\bigcup_{\alpha\in \mathrm{Ord}} M^{\alpha} |\mu_\alpha$. 
\end{enumerate-(i)} 
\end{definition} 

If there is a measurable cardinal above a Woodin cardinal, then $M_1^{\#}$ exists \cite{MiSte, Ste1} and is unique \cite[Corollary 3.12]{Ste}. 
%$M_1^{\#}$ is unique 
%Let $M_1$ denote the iterate of the top measure of $M_1^{\#}$ of length $\mathrm{Ord}$. 
%Let $M^{\infty}$ denote the iterate of the unique measure on the least measurable cardinal in $M_1$ of length $\mathrm{Ord}$. 

%\todo[inline]{we should argue that it is possible to just assume some determinacy, equivalently that an $\omega_1$-iterable $M_1^{\#}(x)$ exists for all reals $x$. argue as in Philipp's dissertation that the co-iteration can be done. In the other direction, we would need to argue that the proof can be done just with $\omega_1$-iterability} 

%%%%%%%%%%%%%%%%%%%%%%%%%%%%%%%%%%%%%%%%%%%%%%%%%%%
%%%%%%%%%%%%%%%%%%%%%%%%%%%%%%%%%%%%%%%%%%%%%%%%%%%
\subsection{Subsets of countable ordinals} 

In this section, we assume that $M_1^{\#}$ exists and is $\omega_1+1$-iterable. 
We show that $\CC_{\alpha}$ is equal to the power set of $\alpha$ in $M_1^{\#}$ and therefore equal to the power set of $\alpha$ in $M^{\infty}$ for countable ordinals $\alpha$. 
%This follows from the next results for subsets of countable ordinals. 

\begin{lemma} \label{recognizable implies in M_infty} 
%Suppose that $\alpha<\omega_1$ and 
%\todo{$\omega_1$-iterable?} Suppose that $M_1^{\#}$ exists. 
If $x$ is a recognizable subset of a countable ordinal $\alpha$, %<\omega_1$, 
then $x\in M^{\infty}$. 
\end{lemma}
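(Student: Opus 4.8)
The plan is to reduce the statement to showing $x\in M_1^{\#}$ and then to place $x$ into a countable iterate of $M_1^{\#}$ via a genericity iteration, pulling $x$ back to $M_1^{\#}$ using the homogeneity lemma already at our disposal. For the reduction, apply Lemma \ref{2.3} to write $x$ as the unique $z\subseteq\alpha$ with $L[z]\models\varphi(z,\beta)$ for some $\Sigma_1$-formula $\varphi$ and ordinal $\beta$. Since $\alpha$ is countable we have $\alpha<\mu$, where $\mu$ is the least measurable cardinal of $M_1^{\#}$ used to build $M^{\infty}$; as every critical point occurring in that iteration is $\geq\mu>\alpha$, we get $P(\alpha)\cap M^{\infty}=P(\alpha)\cap(M_1^{\#}|\mu)=P(\alpha)\cap M_1^{\#}$. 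Hence it suffices to show $x\in M_1^{\#}$.

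Coding $x$ by a real (legitimate as $\alpha<\omega_1$), I would use the assumed $\omega_1+1$-iterability to run Woodin's genericity iteration on $M_1^{\#}$ \emph{inside} $V$, producing a countable iterate $N^{\#}$ with iteration map $i$ and with the code of $x$ generic over $N$ (the part of $N^{\#}$ below its image Woodin $\delta^{N}$) for the extender algebra at $\delta^{N}$. The point is to arrange $\operatorname{crit}(i)>\alpha$: Woodinness supplies, for each extender-algebra axiom that must be settled, an extender whose critical point is as large as desired below the Woodin, so all critical points can be kept above $\alpha$. Because $N^{\#}$ is countable and the whole iteration lies in $V$, the model $N[x]$ is transitive and contained in $V$. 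I would then force once more, with $\mathrm{Col}(\omega,\delta^{N})$ over the countable model $N[x]$, obtaining $h\in V$; by Solovay absorption (the extender algebra has size $\leq\delta^{N}$ and is $\delta^{N}$-cc) one has $N[x][h]=N[k]$ for a single $\mathrm{Col}(\omega,\delta^{N})$-generic $k$ over $N$, and $\mathrm{Col}(\omega,\delta^{N})$ is homogeneous.

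Now I transfer recognizability downward. Since $N[k]\subseteq V$ and $x\in N[k]$, the uniqueness of $x$ is inherited: for any $z\subseteq\alpha$ in $N[k]$ with $z\neq x$ the failure $L[z]\not\models\varphi(z,\beta)$ holds in $V$ and hence in $N[k]$, by absoluteness of $L[z]$-satisfaction and of OTM-computations between transitive models of $\ZF$; and $L[x]\models\varphi(x,\beta)$ likewise holds in $N[k]$. Thus the recognizing program halts with $\delta_{xz}$ on every input $z\subseteq\alpha$, so $x$ is recognizable from $\beta$ in $N[k]$. Since $N[k]$ is a homogeneous generic extension of $N$ and $N\models\ZF$, Lemma \ref{weakly homogeneous} gives $x\in N$. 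Finally $\operatorname{crit}(i)>\alpha$ forces $P(\alpha)^{N}=P(\alpha)^{M_1^{\#}}$, whence $x\in M_1^{\#}\subseteq M^{\infty}$, as required.

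The main obstacle I expect is the genericity iteration itself: carrying it out inside $V$ through iterability while keeping every critical point above $\alpha$, and the accompanying bookkeeping ensuring that $N[x]$, and hence $N[k]$, is a transitive model contained in $V$ so that the uniqueness clause of recognizability descends from $V$. The remaining ingredients — the Solovay absorption into $\mathrm{Col}(\omega,\delta^{N})$, homogeneity of the collapse, and the absoluteness of the $\Sigma_1$-recognition — are routine, and the arbitrariness of the ordinal parameter $\beta$ causes no difficulty, since $\beta$ lives in every model in sight and the recognition statement is absolute regardless of its size.
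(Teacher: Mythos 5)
Your overall architecture — genericity iteration into a countable iterate $N$, followed by pulling $x$ down into $N$ — matches the paper's opening, but your ending (absorbing the extender-algebra extension into $\mathrm{Col}(\omega,\delta^N)$ and invoking Lemma \ref{weakly homogeneous}) differs from the paper, which instead derives a contradiction directly by producing a second generic $y\neq x$ below the condition forcing acceptance and transferring $P^y(\beta)\downarrow=1$ upward to $V$, contradicting uniqueness. Before comparing the endings, there is a genuine error at the start: you assert that $\alpha<\mu$ because $\alpha$ is countable, where $\mu$ is the least measurable cardinal of $M_1^{\#}$. But $M_1^{\#}$ has projectum $\omega$ and is a countable structure, so $\mu$ (and indeed the Woodin cardinal $\delta^{M_1^{\#}}$) is itself a countable ordinal; an arbitrary countable $\alpha$ can lie far above both. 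This breaks two steps at once: the identity $P(\alpha)\cap M^{\infty}=P(\alpha)\cap M_1^{\#}$ fails, and the genericity iteration cannot keep its critical points above $\alpha$ when $\alpha\geq\delta^{M_1^{\#}}$ — one cannot make a subset of $\alpha$ generic for the extender algebra at a Woodin cardinal sitting below $\alpha$. This is precisely why the paper passes first to $M^{\alpha}$, the $\alpha$-th linear iterate of the least measure, where $\mu_\alpha\geq\alpha$, the Woodin is above $\alpha$, and $P(\alpha)\cap M^{\infty}=P(\alpha)\cap M^{\alpha}$. The repair is mechanical (replace $M_1^{\#}$ by $M^{\alpha}$ throughout), but as written the reduction is false.

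On the divergent ending: your route requires the full recognizability of $x$ to relativize \emph{down} to the countable model $N[k]$, so that Lemma \ref{weakly homogeneous} can be applied over $N$. That demands that the ordinal parameter $\beta$ belong to $N$ and that the recognizing computations (equivalently, the $\Sigma_1$ witnesses of Lemma \ref{2.3}) appear below $\mathrm{Ord}^N$ for every $z\in P(\alpha)^{N[k]}$ — neither is guaranteed when $\beta$ is large, and your closing assertion that the size of $\beta$ "causes no difficulty" is exactly where this is swept under the rug. The paper's two-generics argument asks for less: it needs only the single fact that some condition forces acceptance of the canonical generic, and then uses \emph{upward} $\Sigma_1$-absoluteness to see that the second generic $y\neq x$ is also accepted in $V$. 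If you adopt the $M^{\alpha}$ fix, I would also recommend switching to that ending rather than routing through the homogeneity lemma.
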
 
\begin{proof} 
It is sufficient to show that $x\in M^{\alpha}$, since $M^{\alpha}|\mu_\alpha= M_1|\mu_\alpha$ and $\alpha\leq \mu_\alpha$. 
There is a countable iteration tree on $M^{\alpha}$ with last model $N$ such that $x$ is $\mathbb{P}$-generic over $N$ \cite{Ste, NZ} (see also \cite[Lemma 2.42]{Schlicht}). % (more precisely $x$ generates a $\mathbb{P}$-generic extension of $N$) for some forcing $\mathbb{P}\in N$ 
Suppose that $P$ recognizes $x$ from $\beta$. 
Then $P$ recognizes $x$ from $\beta$ in $N[x]$. Suppose that this is forced by a condition $p$. 

Suppose that $y$ is $\mathbb{P}^N$-generic below $p$ over $N[x]$ in $V$. 
Then $y$ is recognized by $P$ from $\beta$ in $N[y]$. 
Hence halts on input $y$ with parameter $\beta$ in $V$ by the absoluteness of computations. This contradicts the uniqueness of $x$. 
\end{proof} 

We need the following notions. 
%The \emph{natural length} $\rho(E)$ of an extender $E$ is defined as the supremum of its generators. 
The \emph{length} or \emph{index}  $\mathrm{lh}(E)$ of an extender $E$ is its index in the extender sequence if $E\neq\emptyset$ and $0$ if $E=0$. 
An iteration tree is normal if the lengths of the extenders are weakly increasing. 

\begin{definition} 
Suppose that $T$ is a normal iteration tree (see \cite{Schindler}) with the sequence $\langle M_i\mid i<\lambda\rangle$ of models and the sequence $\langle E_i\mid i<\lambda\rangle$ of extenders. 
\begin{enumerate-(i)} 
\item 
The \emph{common-part model of $T$} is $M_T=\bigcup_{i<\lambda}M_i|\mathrm{lh}(E_i)$. 
\item 
The height of the common-part model is $\delta_T=\sup_{i<\lambda}\mathrm{lh}(E_i)$. 
\item 
$T$ is \emph{maximal} if $\delta_T$ is a Woodin cardinal in $L(M_T)$. 
\item 
$T$ is \emph{short} if $T$ is not maximal. 
\end{enumerate-(i)} 
%all trees $U$ on $N$ are short, i.e. $L(M(U))\vDash \delta_U$ is not Woodin. 
\end{definition} 

%We will work with iteration trees on $1$-small $M=(J_\alpha,\in,\vec{E})$, i.e. there is no $E$ in the sequence $\vec{E}$ such that there is a Woodin cardinal in $M|\mathrm{crit}(E)$. 

\begin{definition} 
A premouse $M$ is \emph{$\Pi^1_2$-iterable} \cite{Ste2} if there is an iteration strategy $\sigma$ such that the following conditions hold for every countable iteration tree $T$ on $M$ following $\sigma$. 
\begin{enumerate-(i)} 
\item 
If $T$ has successor length, the next ultrapower is well-founded. 
\item 
If $T$ has limit length, then for every $\alpha<\omega_1$ there is a cofinal branch $b$ such that $\alpha$ is contained in the well-founded branch of the direct limit model $M_b$. 
\end{enumerate-(i)} 
\end{definition} 

%Suppose that $M_1^{\#}$ exists. 
The following %converse 
result follows from \cite[Lemma 2.2]{Ste2}. 
%{cite} [Steel: projectively well-ordered inner models, Lemma 2.2] for subsets of $\omega$. 

\begin{lemma} \label{in M_infty implies recognizable} 
Suppose that $x\in M^{\infty}$ is a subset of a countable ordinal $\alpha$. Then $x$ is computable from a recognizable subset of a countable ordinal. 
\end{lemma}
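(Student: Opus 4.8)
The plan is to recognize a real coding a countable initial segment of $M_1^{\#}$ that captures $x$, and then to compute $x$ from that code. First I would reduce to the case that $x$ already lives in $M_1^{\#}$. Since $M^{\infty}$ is obtained by iterating the measure on the least measurable cardinal $\mu$ of $M_1^{\#}$, and every subset of a countable ordinal lies far below $\mu$ and is fixed by all of the iteration maps, the subsets of countable ordinals in $M^{\infty}$ coincide with those in $M_1^{\#}$; in particular $x\in M_1^{\#}$. This is the analogue, for the present direction, of the reduction to $M^{\alpha}$ used in Lemma \ref{recognizable implies in M_infty}.

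Next I would produce a canonical countable mouse capturing $x$. Taking the transitive collapse of the hull of $\{x,\gamma\}\cup\omega$ in $M_1^{\#}$ (where $x\subseteq\gamma$ and $\gamma$ is countable) yields a countable, $\omega_1$-iterable premouse containing $x$; passing to the $\trianglelefteq$-least level $N$ of this mouse with $x\in N$, standard fine structure gives $\rho_\omega(N)=\omega$, so $N$ is coded by a real $c$ and $x$ is $\Sigma_1$-definable over $N$. Fixing the index $\beta<\omega_1$ of $x$ in the canonical wellorder of $\mathcal{P}(\gamma)^{N}$, the real $x$ becomes computable from $c$ together with the ordinal parameters $\gamma,\beta$ in the sense of Theorem \ref{relOTM}. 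Being $\omega_1$-iterable, $N$ is in particular $\Pi^1_2$-iterable in the sense defined above.

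It then remains to recognize the code $c$. I would use the recognition predicate $R(d)$ asserting that $d$ codes a wellfounded, sound, $\omega$-projecting, $\Pi^1_2$-iterable premouse $N_d$ which is the $\trianglelefteq$-least of its levels containing its own $\beta$-th subset of $\gamma$. The crucial input is \cite[Lemma 2.2]{Ste2}: any two $\Pi^1_2$-iterable premice can be compared, and the comparison behaves correctly, so two candidates satisfying $R$ are comparable; since the canonical wellorders of mice cohere along $\trianglelefteq$ and each candidate is minimal for the object newly appearing at its top level, the two must coincide, whence $R$ has $c$ as its unique solution. Our mouse $N$ satisfies $R$ by the choice of $\beta$, so the recognized real is exactly $c$ and $x$ is computable from it. To turn $R$ into an actual OTM-recognition I would evaluate its $\Pi^1_2$-iterability clause using the parameter $\omega_1$: that clause is $\Pi^1_2$ in $d$, hence (by Shoenfield absoluteness, as in Lemma \ref{recognizable implies in M_infty}) decidable by a computation searching through the countable iteration trees that become available once $\omega_1$ is given, while the remaining clauses are arithmetical or $\Pi^1_1$ in $d$ and so directly checkable.

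The main obstacle is the recognition step, and specifically the two facts imported from \cite[Lemma 2.2]{Ste2}: that $\Pi^1_2$-iterability is a sufficiently absolute, projective substitute for genuine iterability that an OTM can verify from $\omega_1$, and that comparison of $\Pi^1_2$-iterable premice yields the uniqueness needed for $R$ to have a single solution. Establishing this comparison-theoretic uniqueness — ruling out a spurious, possibly ill-founded candidate that nonetheless passes the $\Pi^1_2$-iterability test — is where the genuine descriptive inner model content lies. By contrast, the fine-structural bookkeeping (the minimal level projecting to $\omega$ and the pinning down of $x$ by its index $\beta$) and the reduction $\mathcal{P}(\gamma)\cap M^{\infty}=\mathcal{P}(\gamma)\cap M_1^{\#}$ are routine.
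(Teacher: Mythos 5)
Your overall strategy --- recognize a real coding a countable level of (an iterate of) $M_1$ by combining $\Pi^1_2$-iterability with a first-order characterization, derive uniqueness from a comparison argument, and then compute $x$ from the code --- is the strategy of the paper. But there are two genuine problems. First, the reduction to $M_1^{\#}$ is false: $M_1^{\#}$ is a sound premouse with projectum $\omega$, hence countable, so its least measurable cardinal $\mu$ is itself a countable ordinal in $V$. A countable ordinal $\alpha$ can therefore lie far \emph{above} $\mu$, and $P(\alpha)\cap M^{\infty}$ is then computed only in an iterate $M^{\beta}$ with $\mu_\beta>\alpha$, not in $M_1^{\#}$ (which need not even contain $\alpha$). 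Your claim that ``every subset of a countable ordinal lies far below $\mu$ and is fixed by all of the iteration maps'' is exactly backwards; the paper handles this by working with the corresponding level of an iterate in which the least measurable has been pushed above $\alpha$. The rest of your construction could be carried out in that iterate once this is corrected, so the error is local but real.

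Second, and more seriously, the uniqueness of the solution of your predicate $R$ is precisely the content that has to be proved, and you have deferred all of it to \cite[Lemma 2.2]{Ste2}. The paper's proof of this lemma is almost entirely a comparison argument, and the delicate case is when one of the comparison trees is \emph{maximal}, i.e.\ $\delta_T$ is Woodin in $L(M_T)$: there $\Pi^1_2$-iterability does not by itself supply unique cofinal wellfounded branches, and the paper gets through only by exploiting the specific first-order shape of the recognized level --- a $\mathsf{ZF}^-$ model of height $(\kappa^+)^{M_1}$ with largest cardinal $\kappa$ and \emph{no measurable cardinals} --- to force the tree on the candidate side to consist of finitely many drops followed by iterations of a single normal measure. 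Your predicate $R$ (sound, projecting to $\omega$, least level containing its own $\beta$-th subset of $\gamma$) imposes no such smallness conditions on the candidate $N_d$, so a candidate with measurable cardinals, or one that is not $1$-small, is not excluded, and the maximal-tree case is not covered as you have set things up; at a minimum you would need to add ``no measurable cardinals'' (or $1$-smallness) to $R$ and then carry out the short/maximal case analysis rather than cite it. The remaining ingredients --- the minimal $\omega$-projecting level, pinning down $x$ by its index $\beta$ in the cohering canonical wellorders, and deciding the $\Pi^1_2$-iterability clause from the parameter $\omega_1$ via Shoenfield absoluteness --- are sound and match the paper.
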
 

\begin{proof} 
Suppose that $\kappa$ is the least measurable cardinal in $M_1$ and $\mu=(\kappa^+)^{M_1}$.  
Let $M=M_1| \mu$. 
We first suppose that $x\in M$. The proof for the general case is analogous, where $M$ is replaced with an iterate of $M$ such that $\kappa$ is mapped above $\alpha$. 

%Since $M_1^{\#}$ exists, $\mu:=(\kappa^+)^{M_1}$ is countable. 
Suppose that $N$ is a countable solid \cite[2.7.4]{MiSte} and sound \cite[2.8.3]{MiSte} $\Pi^1_2$-iterable premouse of height $\mu$. 
Suppose that $N$ is a model of $\mathsf{ZF}^-$ with a largest cardinal $\kappa$ and no measurable cardinals. 
In the rest of the proof we show that $M=N$. 
This implies the conclusion as follows. 
Suppose that $c$ is the subset of $\gamma$ which codes $M$ via G\"odel pairing, where $\gamma$ is the order type of the canonical well-order of $M$. Then $c$ is recognizable. 

We form the co-iteration of $M$ and $N$ to $\bar{M}$ and $\bar{N}$. We will show that $M=\bar{M}=N=\bar{N}$. 
Let $T$ denote the tree on $M$ and let $U$ denote the tree on $N$. 

The co-iteration terminates if the target models are comparable and well-founded or one of the models is ill-founded. 
If the target models are comparable, we say that the co-iteration succeeds. 
We call an iteration \emph{simple} if there is no drop along the main branch. 

%\begin{claim*} 
%If $T$ is not short, then one of the sides in the co-iteration is simple. 
%\end{claim*} 
%\begin{proof} 
%REALLY WORK WITH INITIAL SEGMENTS 
%Suppose that $T$ is maximal. Then this follows from the previous claim. 
%
%Suppose that $U$ is maximal. Then $L(M_T)=L(M(U))$ and $\delta_T=\delta_U$. DONE 
%\end{proof}

%\begin{claim*} \label{no drop in maximal trees}
%If $U$ is maximal, 
%$L(M_U)\vDash \delta_U$ is Woodin, 
%then there is no drop along the main branch of $U$. 
%\end{claim*} 
%\begin{proof} 
%As in the previous claim. 
%\end{proof} 

%\begin{claim*} 
%One of the sides in the co-iteration is simple. 
%\end{claim*} 
%\begin{proof} 
%Suppose that $T$ and $U$ are short. 

%In the remaining cases, all initial segments of $T$ and $U$ are short. 

\begin{claim*} 
If all initial segments of $T$ and $U$ are short, then the co-iteration succeeds. 
\end{claim*} 
%, i.e. $L(M(U))\vDash \delta_U$ is not Woodin. 
\begin{proof} 
Since the trees are short, there is at most one well-founded branch, since otherwise $\delta_T$ is a Woodin cardinal in $M_T$ \cite[Theorem 6.10]{Ste}. 
% \todo{cite} [CITE Steel: ountine, Theorem 6.10. 
%If there is more than one well-founded branch, then 
%If there is always a well-founded branch, then the co-iteration is successful, since there can be at most on well-founded branch, by the assumption that $L(M(U))\vDash \delta_U$ is not Woodin [CITE a theorem in Martin-Steel: iteration trees]. 

Note that every limit initial segment of $T$ has a well-founded branch, since $M$ is $\omega_1+1$-iterable. 
Suppose that some limit initial segment $U\upharpoonright \gamma$ of $U$ has no well-founded branch. 
Since $N$ is $\Pi^1_2$-iterable, for every $\alpha<\omega_1$, there is a cofinal branch $b$ in $U\upharpoonright \gamma$ such that $\alpha\subseteq \mathrm{wfp}(N_b)$, where $N_b$ denotes the model for the branch $b$. 
%, since $N$ is $\Pi^1_2$-iterable. 
%It follows that for every $\alpha<\omega_1$, 
Hence $\delta_{U\upharpoonright \gamma}$ is Woodin in $L_{\alpha}(M_{U\upharpoonright \gamma})$ \cite[Theorem 6.10]{Ste}. 
Since this holds for all countable $\alpha$, $\delta_{U\upharpoonright \gamma}$ is Woodin in $L(M_{U\upharpoonright \gamma})$, contradicting the assumption. 

Then the models in step $\omega_1$ are comparable and 
the pressing down argument in the proof of the comparison lemma shows that the comparison is successful at some countable step. 
%The pressing down argument in the proof of the comparison lemma shows that the models in step $\omega_1$ are comparable. 
%this is sufficient to show that the iterates of $M$ and $N$ at step $\omega_1$ agree. 
\end{proof} 

\begin{claim*} 
If all initial segments of $T$ and $U$ are short, then $M=N$. 
%If there is a drop on the main branch of $T$ or on the main branch of $U$, then $M=N$. 
\end{claim*} 
\begin{proof} 
%The argument, assuming that the co-iteration succeeds: 
%Let $P$ and $Q$ denote the iterates of $M$ and $N$ in the co-iteration. 
First suppose that $\bar{M}\lhd \bar{N}$. 
Let $i$ denote the least truncation point on either side. 
Suppose that $\kappa_i$ is the critical point of the extender $E^{M_i}_{\nu_i}$ used in the iteration of $M$ in step $i$. 
%Let $\mu_i$ be the index of the extenders $E^{M_i}$ and $E^{N_i}$ used at stage $i$. 

Then $\rho_k(M_i)\leq \kappa_i$ for some $k$, since we cut down in step $i$. 
Since 
%\todo{if possible, find a reference to the master code $A^k_{M_i}$ in the Mitchell-Steel book} 
$M_i$ is $k$-sound above $\kappa_i$, there is a well-order of order type $\mathrm{Ord}^{M_i}$ computable from a set $A^k_{M_i}$ (the master code) such that $A^k_{M_i}$ is definable over $M_i$, and therefore over $\bar{M}$. 
Then $A^k_{M_i}\in \bar{N}$. 
Since $A^k_{M_i}\in \bar{N}$ codes a well-order of the height of $M_i$, and therefore a well-order with order type $\nu_i$, $\nu_i$ is collapsed in $\bar{N}$. 
This is a contradiction, since by the usual properties of the index, $\nu_i$ is a cardinal in all iterates of $N_i$ and therefore in $\bar{N}$. 

If $\bar{N}\lhd \bar{M}$ then the argument is symmetric. 

Now suppose that $\bar{M}=\bar{N}$. 
In this case, the proof works by the following standard argument. 
Suppose that $i$ is the last truncation point of the iteration of $M$ and $j$ is the last truncation point of the iteration of $N$. 
Suppose that $M'$ is the truncation of $M_i$ and $N'$ is the truncation of $N_j$. 
Since $M_i$ is sound, the core of $\bar{M}$ is equal to $M'$. 
Since $N_j$ is sound, the core of $\bar{N}$ is equal to $N'$. 
Recall that the core is the hull of the projectum and the standard parameter. Since we iterate above the projectum, the core does not change. 
Hence $M'=N'$. 

First suppose that $i<j$. 
Then $E^{M_i}_{\nu_i}\neq\emptyset$, since we truncated in step $i$. 
Since $i<j$, $E_{\nu_i}^{M_j}=E_{\nu_i}^{N_j}=\emptyset$. 
Since $M'=N'$, $E_{\nu_i}^{N_j}=E_{\nu_i}^{M_i}$. This contradicts the fact that $E_{\nu_i}^{M_i}\neq\emptyset$ and $E_{\nu_i}^{N_j}=\emptyset$. 
Second suppose that $i=j$. 
Since $E_{\nu_i}^{M_i}\neq E_{\nu_i}^{N_i}$ and since these extenders appear in $M'$ and $N'$, we have $M'\neq N'$, contradicting the assumption. 
\end{proof}

\begin{claim*} \label{the iteration always uses the least normal measure}
Suppose that $T$ is maximal. Then in $U$, there are finitely many drops, and each step is the ultrapower with the unique normal measure on the least measurable cardinal. 
Moreover the co-iteration succeeds. 
%Suppose wlog that in the co-iteration, $M$ is not cut down. 
\end{claim*} 
\begin{proof} 
There is no drop along the main branch of $T$ by Claim \ref{no drop in maximal trees}. 
Therefore every step in $U$ is the ultrapower by the unique normal measure on the least measurable cardinal, and this measurable cardinal has order $0$. 
Otherwise there would be a measurable cardinal in $M$. 

Hence all iterates of $N$ are well-founded. 
Then the models in step $\omega_1$ are comparable and 
the pressing down argument in the proof of the comparison lemma shows that the comparison is successful at some countable step. 
%below $\kappa$. 
%, since there is no drop along the main branch of $T$. 
\end{proof}

\begin{claim*} \label{no drop in maximal trees} 
If $T$ is maximal, 
%$L(M_T)\vDash \delta_T$ is Woodin,
then there is no drop along the main branch of $T$. 
The same holds for $U$. 
\end{claim*} 
\begin{proof} 
We will prove this for $T$. The proof for $U$ is analogous. 
Suppose that the last truncation in $T$ is at $i$. 
Suppose that $M_i$ is cut down to $\bar{M}_i$ and $\bar{M}$ is a simple iterate of $\bar{M}_i$. 
We argue that there is no Woodin cardinal in $L[\bar{M}]$. 
%Suppose that an iterate $M$ of $M_1$ is cut down to $M'$ and $N'$ is a simple iterate of $M'$. We want to argue that there is no Woodin cardinal in $L[N']$. 

In $M_1$ there is no extender above a Woodin cardinal. 
Therefore $M_1$ is tame, i.e. there is no extender overlapping a Woodin cardinal. Otherwise in the ultrapower, the image of the critical point is measurable and the Woodin cardinal remains a Woodin cardinal, since the extender sequence is coherent. 

Hence the extender $E$ in $M$ responsible for the last drop does not overlap a Woodin cardinal. 

\begin{subclaim*} 
In $\bar{M}_i$ there is no cardinal $\delta$ which is a Woodin cardinal definably over $\bar{M}_i$. 
\end{subclaim*} 
\begin{proof} 
%So if $\bar{M}_i$ has a Woodin cardinal, it is either
Suppose that $\delta$ is below $\mathrm{crit}(E)$. Then $M$ and $M_1$ have a measure above a Woodin cardinal, contradicting the definition of $M_1$. % as the minimal mouse with the properties of $M_1$. 

Suppose that $\delta$ is above $\mathrm{crit}(E)$. 
%$\bar{M}_i$ has a Woodin cardinal above the index of the extender. 
%Suppose that $\kappa$ is the critical point. 
Since $M_i$ is acceptable, there is a surjection from $\mathrm{crit}(E)$ onto $\bar{M}_i$ definable over $\bar{M}_i$. Then the Woodin cardinal is destroyed definably over $\bar{M}_i$. The same holds for $\bar{M}$ by elementarily. 
\end{proof} 
This proves the claim. 
\end{proof}

\begin{claim*} 
Suppose that $T$ or $U$ is maximal. 
Then $M=N$. 
\end{claim*} 
\begin{proof} 
We prove this for $T$. The proof for $U$ is analogous. 
Suppose that the iteration tree $U$ is nontrivial. Since there are no measurable cardinals in $N$, this implies that there is a drop in $U$. 
Hence there is a bounded subset $x$ of $\kappa$ which is not an element of $N$, but it is definable over $N$. 

%Show that the iteration of $N$ is as follows. There are finitely many drops. Between them, the least measure is iterated, and this has Mitchell order $0$. 
%Otherwise, there would be measurable cardinals below $\kappa$ in the iterate of $N$. 
%This is because there are no measurable cardinals below $\kappa$, and since $M$ is not moved. 
The iteration of $N$ is definable in $N$ by Claim \ref{the iteration always uses the least normal measure}. 
Since $N$ is a model of $\mathsf{ZF}^-$, the iterate $N_\alpha$ of $N$ in any step $\leq\kappa$ 
is an element of $N$ and hence it has height $<\mu$, where $\mu=(\kappa^+)^{M_1}$. 
Hence $N_\alpha$ is not an initial segment of $M$. 

Let $N_\kappa$ denote the iterate of $N$ in step $\kappa$. 
Since $x$ is definable over $N_\kappa$ but it is not an element of $N_\kappa$, $N_\kappa$ is not an initial segment of $M$. 
Therefore the co-iteration continues.  
%There is a bounded subset of $\kappa$ which is definable over the $\kappa^{th}$ iterate of $N$, but it's not in $M$. 
%Therefore the $\kappa^{th}$ iterate of $N$ is not an initial segment of $M$, and we have to continue the co-iteration. 
%We are in the case that $M$ is not cut down (CHECK, this is the missing argument). 
The index of the next extender used in $N_\kappa$ is a cardinal in the following iterates of $N_\kappa$. 
However there are no cardinals above $\kappa$ in $M$ and therefore the final models cannot agree. 
This contradicts the fact that the co-iteration terminates by Claim \ref{the iteration always uses the least normal measure}. 
\end{proof} 

%\begin{claim*} 
%Suppose that $U$ is maximal. 
%Then $M=N$. 
%\end{claim*} 
%\begin{proof} 
%This is symmetric to the previous claims. 
%\end{proof} 
 
%\item 
%$\Pi^1_2$-iterability of $N$ is sufficient for linear iterations, since the branch at any limit step is unique and therefore the model at the limit step is well-founded. 
This completes the proof. 
\end{proof} 

\begin{thm} 
%\todo{alternatively use only $\omega_1$-iterability} 
%Suppose that $M_1^{\#}$ exists and $\alpha<\omega_1$. 
If $\alpha$ is countable and $x\subseteq \alpha$, then $x\in \CC_{\alpha}$ if and only if $x\in M^{\infty}$. 
In particular $\CC_{\omega}=P(\omega)\cap M_1=Q_3$ \cite{KeMaSo}. % [Kechris et al: introduction to Q-theory]. 
\end{thm}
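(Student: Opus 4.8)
The plan is to derive the theorem directly from the two main lemmas of this subsection, Lemma~\ref{recognizable implies in M_infty} and Lemma~\ref{in M_infty implies recognizable}, together with the two-step reduction of the recognizable closure (Lemma~\ref{twostep}) and the fact that $M^{\infty}$ is an inner model closed under relative constructibility. The point of invoking Lemma~\ref{twostep} is that both directions then reduce to handling a \emph{single} recognizable set rather than an iterated chain.

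For the forward direction, suppose $x\subseteq\alpha$ with $\alpha$ countable and $x\in\CC_{\alpha}$. Applying Lemma~\ref{twostep} with $z=\emptyset$, I obtain one set $y$, the join of a finite witnessing sequence of subsets of $\alpha$, such that $y$ is recognizable and $x$ is computable from $y$. Since $\alpha$ is countable and the sequence is finite, $y$ is a recognizable subset of a countable ordinal, so Lemma~\ref{recognizable implies in M_infty} gives $y\in M^{\infty}$. By Theorem~\ref{relOTM}, computability of $x$ from $y$ means $x\in L[y]$, and as $M^{\infty}$ is an inner model containing $y$ we conclude $x\in L[y]\subseteq M^{\infty}$.

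For the converse, suppose $x\subseteq\alpha$ with $\alpha$ countable and $x\in M^{\infty}$. If $\alpha$ is finite this is trivial, since $x$ is then a finite set, computable without parameters and hence recognizable, so $x\in\CC_{\alpha}$. If $\alpha$ is infinite, Lemma~\ref{in M_infty implies recognizable} yields a recognizable subset $y$ of a countable ordinal from which $x$ is computable. Using that $\alpha$ is infinite and countable, I would recode $y$ as a recognizable subset $\bar{y}$ of $\alpha$ itself, the recognizing program first checking that its oracle codes such a $y$; then $\bar{y}\subseteq\alpha$ is recognizable and $x$ is computable, hence recognizable, from $\bar{y}$, so the sequence $\langle x,\bar{y},\emptyset\rangle$ witnesses $x\in\CC_{\alpha}$. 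For the final assertion, taking $\alpha=\omega$ gives $\CC_{\omega}=P(\omega)\cap M^{\infty}$; since the iteration of $M_1^{\#}$ by the measure on $\mu$ adds no reals, every real of $M^{\infty}$ already appears in $M^0|\mu_0=M_1|\mu_0$, whence $P(\omega)\cap M^{\infty}=P(\omega)\cap M_1$, which equals $Q_3$ by \cite{KeMaSo}.

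The main obstacle I anticipate is the recoding step in the converse: ensuring that a recognizable subset of an arbitrary countable ordinal can be replaced by a genuinely recognizable subset of the prescribed ordinal $\alpha$, with recognizability \emph{over subsets of $\alpha$} preserved, and that the passage from ``computable from $\bar{y}$'' to ``recognizable from $\bar{y}$'' stays within $P(\alpha)$. Verifying the identification $P(\omega)\cap M^{\infty}=P(\omega)\cap M_1$ at the level of reals, so that the Kechris--Martin--Solovay characterization of $Q_3$ applies, is a secondary point, resting on the agreement of $M^{\infty}$ and $M_1$ below the images $\mu_\alpha$ of the least measure.
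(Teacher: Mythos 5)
Your overall route is the paper's route: the paper's proof of this theorem consists precisely of citing Lemma~\ref{recognizable implies in M_infty} and Lemma~\ref{in M_infty implies recognizable}, and your forward direction (collapse the witnessing chain via Lemma~\ref{twostep} to a single recognizable join $y$, apply Lemma~\ref{recognizable implies in M_infty} to get $y\in M^{\infty}$, and use that $M^{\infty}$ is an inner model so $L[y]\subseteq M^{\infty}$) is a correct and complete filling-in of the half the paper leaves implicit. Your identification of $P(\omega)\cap M^{\infty}$ with $P(\omega)\cap M_1$ is also fine, since the iteration by the measure on the least measurable adds no reals below its critical point.

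The obstacle you flag in the converse, however, is a genuine gap, and your sketched fix (``the recognizing program first checking that its oracle codes such a $y$'') cannot work as stated for the witness that Lemma~\ref{in M_infty implies recognizable} actually produces. That lemma's proof outputs the code $c$ of $M=M_1|(\kappa^{+})^{M_1}$ (or of an iterate with $\kappa$ moved above $\alpha$) as a subset of an ordinal $\gamma\geq(\kappa^{+})^{M_1}$, where $\kappa$ is the least measurable of $M_1$. For $\alpha=\omega$ there is provably no recognizable real $\bar{y}$ with $c\in L[\bar{y}]$: any such $\bar{y}$ would lie in $\CC_\omega$, hence in $M_1$ by the forward direction, but no real of $M_1$ codes $M_1|(\kappa^{+})^{M_1}$, since $(\kappa^{+})^{M_1}$ is a cardinal of $M_1$. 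More generally $\gamma$ can have strictly larger cardinality than $\alpha$ in $L[c]$, so there need not exist any subset of $\alpha$ from which $c$ is constructible, let alone a canonical recognizable one. The definition of $\CC_\alpha$ demands that all sets in the witnessing chain be subsets of $\alpha$, so the converse cannot be obtained by recoding this particular witness; one must instead rerun the comparison argument of Lemma~\ref{in M_infty implies recognizable} at a level $M_1\|\beta$ with $x\in M_1\|\beta$ and $\rho_\omega(M_1\|\beta)\leq\alpha$ (such levels are cofinal in $(\alpha^{+})^{M_1}$ by acceptability), whose canonical master-code is genuinely a subset of $\alpha$ and is recognizable among subsets of $\alpha$ by the same $\Pi^1_2$-iterability comparison. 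The paper elides this point entirely, so you have in fact located a real issue rather than merely a presentational one; but as written your proof does not close it.
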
 
\begin{proof} 
This follows from Lemma \ref{recognizable implies in M_infty} and Lemma \ref{in M_infty implies recognizable}.
\end{proof} 

\begin{remark} 
Suppose that $\gamma$ is countable. Then the recognizable jump $J^{\gamma}$ with parameter $\gamma$ is in $M_1$. 
\end{remark} 
\begin{proof} 
%Suppose that a program $P$ recognizes a real $x$. Then $x\in M_1$. 
Suppose that $N$ is a countable iterate of $M_1$ such that its least measure is above $\gamma$. 
Suppose that $P$ does not recognize any real. It follows from the genericity iteration that this holds if and only if it is forced over $N$ by the extender algebra. Since we do not move $\gamma$ in iterations to make reals generic over iterates of $N$, this is a statement in $N$ in the parameter $\gamma$. 
%\todo{cite the previous lemma that recognizable reals are in M1} 
Therefore we can determine in $N$ whether a given program $P$ recognizes a real. So the recognizable jump is in $N$. Since $M_1$ and $N$ have the same reals, the recognizable jump is in $M_1$. 
\end{proof} 

%\begin{definition} 
%Suppose that $\kappa$ is the first measurable cardinal in $M_1$ and $M^{\alpha}$ is the $\alpha$-th iterated ultrapower with a normal ultrafilter on $\kappa$. 
%Let $$M^{\infty}=\bigcap_{\alpha\in Ord} M^{\alpha}.$$ 
%\end{definition} 

%We show that for every $\alpha<\omega_1$, every recognizable subset of $\alpha$ is in $M^{\infty}$. 
%
%We show that it is consistent that every recognizable subset of $\omega_1$ is in $M^{\infty}$. We don't know if this is always the case. 
%
%We then show that sets of ordinals coding transitive models with infinitely measurable cardinals are never recognizable. 

%%%%%%%%%%%%%%%%%%%%%%%%%%%%%%%%%%%%%%%%%%%%%%%%%%%
%%%%%%%%%%%%%%%%%%%%%%%%%%%%%%%%%%%%%%%%%%%%%%%%%%%
\subsection{Subsets of $\omega_1$}

In this section, we show that it is consistent with 
%\todo{give definition of Mn sharp, only omega1-iterable! explain why this is sufficient for the genericit yiteration (eg lemma from philipp's dissertation)} 
the existence of an $\omega_1$-iterable $M_n^{\#}(x)$ for all reals $x$ and all $n$ and therefore with projective determinacy 
%$\mathsf{PD}$ 
that every recognizable subset of $\omega_1$ is in $M_1$. 
%\begin{remark} 
%In this section, we will assume that $M_n^{\#}(x)$ exists for all reals $x$ and all $n$. 
%Note that it is a folklore result that $\omega_1$-iterability of all $M_n^{\#}(x)$ is sufficient to do the genericity iteration to make subsets of $\omega$ generic (see e.g. 

In this section we assume that an $M_1^{\#}(x)$ exists for all reals $x$. Then $M_1^{\#}$ is unique by \cite[Lemma 1.2.26]{Schlicht} or \cite[Corollary 1.8]{Nee}.

\begin{thm} 
Suppose that $\mathsf{ZF}+\mathsf{DC}$ holds. 
%Suppose that $M_1^{\#}$ exists and $M^{\infty}$ is defined as above. 
Suppose that for every $A\subseteq\omega_1$ there is some real $x$ with $A\in L[x]$. 
Suppose that $\mathbb{P}$ is homogeneous and preserves $\omega_1$ and that $G$ is $\mathbb{P}$-generic over $V$. 
Then in $V[G]$, every recognizable subset of $\omega_1$ is in $M^{\infty}$. 
\end{thm}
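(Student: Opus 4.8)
The plan is to mirror the proof of the countable case (Lemma \ref{recognizable implies in M_infty}), using the two standing hypotheses to reduce an uncountable recognizable set to a situation where a genericity iteration together with a ``two generics'' argument applies. First I would eliminate the forcing. Since $\mathbb{P}$ is homogeneous and $X\subseteq\omega_1$ is recognizable in $V[G]$, Lemma \ref{weakly homogeneous} yields $X\in V$; and because $\mathbb{P}$ preserves $\omega_1$ we have $\omega_1^{V[G]}=\omega_1^V$, so $X$ is in fact a subset of $\omega_1^V$ lying in $V$. Recognizability survives downward: if $P$ recognizes $X$ from an ordinal $\beta$ in $V[G]$, then for every transitive model $W$ with $V\subseteq W\subseteq V[G]$ the program $P$ still halts on each subset of $\omega_1^V$ in $W$ with output $\delta_{X,\cdot}$, by absoluteness of computations, provided $W$ is tall enough to complete the relevant computations.

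Next I would invoke the covering hypothesis to fix a real $x\in V$ with $X\in L[x]$. As $M_1^{\#}(z)$ exists for every real $z$, the iterability needed for genericity iterations is available. I would then choose an ordinal $\alpha$ so large that $\mathrm{Ord}^{M^{\alpha}}$ exceeds both the $L[x]$-rank of $X$ and all halting times of $P$ on the relevant inputs, and iterate $M^{\alpha}$ to a model $N\in V$ so that $x$ is generic over $N$ for the extender algebra at the image $\bar\delta$ of the Woodin cardinal, exactly as in \cite{Ste, NZ}. Since $\bar\delta$ is countable in $V$, this extender algebra is a countable forcing in $V$. Because $N$ is tall and models enough of $\mathsf{ZF}$, the level $L_{\rho}[x]$ (with $\rho$ the $L[x]$-rank of $X$) is computed correctly inside $N[x]$, so $X\in L[x]\subseteq N[x]$; let $\tau$ be a name with $\tau^{x}=X$.

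I would then run the ``two generics'' argument, but applied to the \emph{recognized} set $X=\tau^{x}$ rather than to the generic real. By the previous paragraph $P$ recognizes $X$ from $\beta$ in $N[x]$, so this is forced by some condition $p$ in the generic filter. If $X\notin N$, then $\tau$ is not decided below $p$, so there are $q,r\leq p$ and an ordinal $\xi<\omega_1^{V}$ with $q\Vdash\xi\in\tau$ and $r\Vdash\xi\notin\tau$. Using $\mathsf{DC}$ and the countability of the extender algebra, I would pick in $V$ generics through $q$ and through $r$, obtaining sets $X_{q}\neq X_{r}$, each recognized by $P$ from $\beta$ in its extension, and hence with $P^{X_{q}}(\beta)\downarrow=P^{X_{r}}(\beta)\downarrow=1$ in $V$ by absoluteness of computations. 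This contradicts the uniqueness of the recognized set, so $X\in N$.

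Finally I would transfer $X$ into $M^{\infty}$. Since $N$ is an iterate of $M^{\alpha}$ by the canonical strategy and $X$ is a subset of $\omega_1$, a comparison of $N$ against the tail of the least-measure iterates $M^{\beta}$ defining $M^{\infty}$, together with the universality of $M^{\infty}$ among such iterates, should place $P(\omega_1)\cap N$ inside $M^{\infty}$ and hence give $X\in M^{\infty}$. I expect this last step to be the main obstacle: the genericity iteration that captured $x$ uses extenders with countable critical points and therefore moves $P(\omega_1^{V})$, so unlike the real case the membership $X\in M^{\infty}$ cannot be read off directly from $X\in N$ and must be argued through the fine-structural comparison and the directed-limit structure of $M^{\infty}$.
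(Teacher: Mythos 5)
Your reduction steps match the paper's: homogeneity of $\mathbb{P}$ puts the recognizable set $A$ into $V$ (Lemma \ref{weakly homogeneous}), the covering hypothesis supplies a real $x$ with $A\in L[x]$, and the two-generics argument over a genericity iterate $N$ (two conditions below $p$ deciding $\xi\in\tau$ differently yield two distinct sets, each accepted by $P$ by absoluteness of computations, contradicting uniqueness) correctly forces $A\in N$. The genuine gap is the final step. From $A\in N$ for a single genericity iterate you cannot conclude $A\in M^{\infty}$: the genericity iteration uses extenders with critical points below the image of the Woodin cardinal, hence rearranges the model on subsets of small ordinals, and no comparison or ``universality'' argument places $P(\omega_1)\cap N$ inside $M^{\infty}$. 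You flag this as the main obstacle, but the proposed fix is a hope rather than a proof, and the theorem is not established without it.

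The paper closes this gap with two further ideas that are absent from your proposal. First, for each $\gamma<\omega_1$ it performs the genericity iteration of $M^{\gamma}$ \emph{above} $\gamma$, using only extenders with critical point above $\gamma$; then $P(\beta)\cap N=P(\beta)\cap M^{\gamma}=P(\beta)\cap M^{\infty}$ for $\beta\leq\gamma$, so $A\in N$ yields only the local conclusion $A\cap\beta\in M^{\infty}$ for every $\beta<\omega_1$. Second, these approximations are glued by a pressing-down argument: writing $\langle\kappa_\lambda\mid\lambda<\omega_1\rangle$ for the images of the least measurable cardinal of $M_1$, each $A\cap\kappa_\lambda$ has a preimage in $M^{f(\lambda)}$ for some $f(\lambda)<\lambda$, so by Fodor (together with the countability in $V$ of the relevant power sets of $M^{\alpha}$) there are a stationary set $S$, an ordinal $\alpha$, and a single $\bar{A}\in M^{\alpha}$ with $\pi_{\alpha,\beta}(\bar{A})=A\cap\kappa_\beta$ for all $\beta\in S$, hence for all $\beta\geq\alpha$. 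Since the iteration maps $\pi_{\alpha,\beta}$ are definable in $M^{\alpha}$, the set $A=\bigcup_{\beta}\pi_{\alpha,\beta}(\bar{A})$ is an element of $M^{\alpha}$ and hence of $M^{\infty}$. Without the ``iterate above $\gamma$'' device and this stationarity argument, your proof does not go through.
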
 
\begin{proof} 
Suppose that $A\in V[G]$ is a recognizable subset of $\omega_1$. 
Then $A\in V$ by Lemma \ref{weakly homogeneous}. 
We work in $V$. 
Suppose that $x$ is a real with $A\in L[x]$. 

We first suppose that there is some $\beta<\omega_1$ and a countable iteration with embedding $\pi\colon M^{\beta}\rightarrow N$ such that $x$ is generic over $N$ and $A\notin N$. % for the extender algebra. 
Suppose that $\dot{A}$ is a name in $N$ for $A$. 
Then there is a condition $p$ in $N$ which forces that $P$ recognizes $\dot{A}$ from $\alpha$ and that $A\notin N$. 
Suppose that $y$ is a generic filter in $V$ over $N[x]$ in below $p$. %for the extender algebra. 
Let $B=\dot{A}^y$. 
Since $x,y$ are mutually generic, $N[x]\cap N[y]= N$. 
Then $A\neq B$. 
But $P$ recognizes $A,B$ from $\alpha$, contradicting the uniqueness of $A$. 

Now suppose that there is no such iteration. 
\begin{claim*} 
$A\cap \beta \in M^{\infty}$ for all $\beta<\omega_1$. 
\end{claim*} 
\begin{proof} 
Suppose that $\beta\leq\gamma<\omega_1$. 
There is a countable iteration above $\gamma$ with embedding $\pi\colon M^{\gamma}\rightarrow N$ such that $x$ is generic over $N$. 
Then $A\in N$. 
Since $P(\beta)\cap M^{\infty}= P(\beta)\cap M^{\gamma}= P(\beta)\cap N$, this implies that $A\cap \beta\in M^{\infty}$. 
\end{proof} 

Suppose that $\kappa$ is the least measurable cardinal in $M_1$. 
Suppose that $\langle \kappa_{\alpha}\mid \alpha<\omega_1 \rangle$ is the sequence of images of $\kappa$. 
Let $\pi_{\alpha,\beta}\colon M^{\alpha}\rightarrow M^{\beta}$ denote the elementary embeddings. 
For every limit $\lambda<\omega_1$, there is some $f(\lambda)<\lambda$ such that $A\cap \kappa_\lambda$ has a preimage in $M^{f(\lambda)}$. 
Then there is a stationary set $S\subseteq\omega_1$ and some $\alpha<\omega_1$ such that $A\cap \kappa_\beta$ has the same preimage $\bar{A}$ in $M^{\alpha}$ for all $\beta\in S$. 
Then $\pi_{\alpha,\beta}(\bar{A})=A\cap \beta$ for all $\alpha\leq\beta <\omega_1$. 
Since $\pi_{\alpha,\beta}\in M^{\alpha}$ for all $\alpha\leq\beta<\omega_1$, this implies that $A\in M^{\alpha}$. 
Therefore $A\in M^{\infty}$. 
%We can use Neeman's version of the genericity iteration (Corollary 1.8 in Neeman: Optimal proofs of determinacy, Bulletin of Symbolic Logic). 
%Then the iteration tree on $M_1$ has length $\omega$. 
%Suppose that $M_1=N_0\rightarrow N_1\rightarrow \dots N$ comes from the branch in the iteration tree which leads to $N$. 
%Suppose that $A$ is recognizable from the parameter $\alpha$. 
%Then there is some $i\in\omega$ such that $A,\alpha$ have preimages $\bar{A},\bar{\alpha}$ in $N_i$. Let $\pi\colon N_i\rightarrow N$ denote the map from the iteration tree. 
%Then $\pi(\omega_1)=\omega_1$. 
%
%Now $A$ is recognizable from $\alpha$ in $N$. 
%Therefore $\bar{A}$ is recognizable from $\bar{\alpha}$ in $N_i$.  
\end{proof} 

The assumption in the previous theorem hold if the ground model $L(\mathbb{R})$ satisfies the axiom of determinacy \cite[Theorem 28.5]{Kan} 
% \todo{cite} [Kanamori, Theorem 28.5] 
and for the $\mathbb{P}_{\mathrm{max}}$-extension of $L(\mathbb{R})$. % assuming determinacy in $L(\mathbb{R})$. 

The following result shows that a Woodin cardinal is not sufficient to prove the statement: 
%Therefore a Woodin cardinal is not sufficient to prove the statement: 
if $M$ is %\todo{check the exact definition from Fuchs-Schindler} 
an inner model with a Woodin cardinal which is iterable for countable short trees, then 
%short tree iterable and there is a Woodin cardinal in $M$, then 
every recognizable subset of $\omega_1$ is in $M$. 

\begin{lemma}  
Suppose that an $\omega_1+1$-iterable $M_1^{\#}$ exists. Then there is a cardinal-preserving generic extension $N$ of $M_1$ such that in $N$, there is a recognizable set of ordinals which is not in $M_1$. 
\end{lemma}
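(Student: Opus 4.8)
The plan is to work inside $M_1$ and build a cardinal-preserving coding forcing whose generic object is a set of ordinals $Y\notin M_1$ that is nevertheless recognizable in $N:=M_1[G]$. The guiding principle is that the recognition of $Y$ should be witnessed by an \emph{absolutely} definable procedure — detection of genericity over the constructible universe $L$ — so that it transfers to $N$ without any appeal to $M_1^{\#}$. This is exactly why a single Woodin cardinal cannot suffice: the comparison argument that forced recognizable sets into $M^\infty$ in the preceding theorem used that $M_1^{\#}(x)$ exists for all reals, and that tool is unavailable in $N$. By Lemma \ref{weakly homogeneous} the forcing must be non-homogeneous, and the requisite non-homogeneity will come from adding Cohen subsets along a specific, self-describing pattern of cardinals.

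Concretely, I would reuse the coding construction used to show that every set of ordinals generic over $\HOD$ can be coded into the recognizable closure, now carried out over the ground model $M_1$. Fix a regular cardinal $\lambda$ of $M_1$ above the Woodin cardinal $\delta$ and perform the Easton-support iteration that adds Cohen subsets to intervals of successor cardinals ${\geq}\lambda$ following the $\omega$-step self-referential pattern coding a set $X$; the choice of $X$ is immaterial, and even $X=\emptyset$ works. This yields the generic $Y=\bigcup_{n}X_n\subseteq\delta^{*}:=\sup_n\mu_n$. Each factor $\mathrm{Add}(\nu,1)$ is ${<}\nu$-closed and, since $M_1\vDash\mathsf{GCH}$, also $\nu^{+}$-c.c., so the iteration preserves cardinals and cofinalities and $N=M_1[G]$ is a cardinal-preserving extension. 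Because the iteration is non-trivial over $M_1$, the set $Y$ records genuinely $M_1$-generic information, whence $Y\notin M_1$.

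For recognizability, a program is handed $\bar Y\subseteq\delta^{*}$ and checks in $L[\bar Y]$ whether $\bar Y$ is consistent with the coding scheme, that is, whether the set of cardinals $\nu$ at which $\bar Y$ contains an $\mathrm{Add}(\nu,1)^{L}$-generic follows the prescribed self-referential pattern. Exactly as in the $\HOD$-coding argument, consistency forces $\bar Y\cap[\mu_n,\mu_{n+1})=Y\cap[\mu_n,\mu_{n+1})$ level by level, so $\bar Y=Y$ and $Y$ is recognizable in $N$. The single point where the ground model being $M_1$ matters is that the detection of ``$\nu$ carries a Cohen subset over $L$'' must be clean: in $N$ the only relevant cardinals carrying such a subset should be those inserted by the iteration.

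The main obstacle is therefore to verify that $M_1$ itself carries no $\mathrm{Add}(\nu,1)^{L}$-generic at the regular cardinals $\nu\geq\lambda$ used in the coding. I would secure this by choosing all coding cardinals above $\delta$: above the Woodin cardinal the extender sequence of $M_1$ is empty, so $P(\nu)^{M_1}=P(\nu)^{L[M_1|\delta]}$, and a condensation/acceptability argument shows that a set which is ${<}\nu$-closed-generic over $L$ — hence has all proper initial segments in $L$ yet is non-constructible — cannot be constructed over the mouse $M_1|\delta$. An alternative would be to detect genericity over $M_1|\delta$ directly, but recognizing $M_1|\delta$ in $N$ would again require an iteration strategy one does not have, which is why the absolute $L$-detection at high cardinals is the right route. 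Granting the cleanness of the detection, $Y$ is recognizable in $N$ while $Y\notin M_1$, so $N$ witnesses the lemma and exhibits the failure of the displayed statement from a Woodin cardinal alone.
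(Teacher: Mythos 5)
Your construction is a genuinely different route from the paper's, but as written it has a gap that sits exactly at the point you flag as "the main obstacle" --- and in fact the problem is worse than the one you identify. Your recognition procedure tests whether the components of $\bar Y$ are Cohen-generic \emph{over $L$}, but the forcing you iterate is $\mathrm{Add}(\nu,1)$ as computed in $M_1$, forced over $M_1$. Since $M_1$ contains non-constructible bounded subsets of every infinite cardinal (it contains $0^{\#}$, and $M_1|\delta$ coded as a subset of $\delta$, etc.), we have $\mathrm{Add}(\nu,1)^{M_1}\neq \mathrm{Add}(\nu,1)^{L}$ at every $\nu$, and an $\mathrm{Add}(\nu,1)^{M_1}$-generic over $M_1$ will generically have non-constructible initial segments; it therefore fails to be $\mathrm{Add}(\nu,1)^{L}$-generic over $L$. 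So the actual generic object $Y$ does not pass the very test your program applies, quite apart from the question of whether $M_1$ spuriously satisfies the test elsewhere. The natural repair --- detect genericity over $M_1$ or over $L[M_1|\delta]$ instead of over $L$ --- runs into the obstruction you yourself name: the program working inside $L[\bar Y]$ has no way to identify $M_1|\delta$ without an iterability argument. Your secondary claim, that $L[M_1|\delta]$ carries no Cohen subsets of cardinals above $\delta$ over $L$ by "a condensation/acceptability argument," is also the crux rather than a footnote and would need an actual proof; but even granting it, the primary mismatch remains.

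The paper's proof sidesteps all of this by not asking the new generic object to be recognizable on its own. Working in $M_1$, it takes a Suslin tree $T$ with the \emph{unique branch property} (from $\diamondsuit$), forces a cofinal branch $b$, and sets $N=M_1[b]$; distributivity gives cardinal preservation. The recognizable set is the join $x\oplus b$, where $x$ is the subset of $(\kappa^{+})^{M_1}$ coding $M_1|(\kappa^{+})^{M_1}$, already shown recognizable via the comparison argument of Lemma \ref{in M_infty implies recognizable}. A program can recognize the $x$-part by that argument, read off $T$ from $x$, and then verify that the $b$-part is a cofinal branch of $T$; the unique branch property supplies exactly the uniqueness that recognizability requires, and $b\notin M_1$ because $T$ is Suslin there. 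The lesson is that the "absolute detector" you were looking for need not be genericity over $L$: it can be the already-recognizable mouse code together with a forcing whose generic is outright definable (as the unique branch) from a parameter inside that code. If you want to salvage your coding approach, you would similarly need to anchor the genericity test to a recognizable code for the relevant initial segment of $M_1$ rather than to $L$.
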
 
\begin{proof} 
We work in $M_1$. 
Suppose that $T$ is a Suslin tree with the unique branch property. % in $M_1$. 
The existence of such a tree is proved from $\diamond$ in \cite[Theorem 1.1]{FuHa}.
% \todo{cite} [Fuchs-Hamkins: degrees of rigidity of Suslin trees, Theorem 1.1]. 
Suppose that $b$ is a $T$-generic branch over $M_1$. 
Let $N=M_1[b]$. 
Since $T$ is $<\omega_1$-distributive, % in $M_1$, 
the forcing does not change cofinalities. 
There is a unique branch in $T$ of length $\omega_1$ in $N$. 
%All maximal antichains in $T$ in $M_1$ are countable and therefore they are elements of $M_1|\omega_1$. 
%$M_1|\gamma$ for some $\gamma<\omega_2^{M_1}$. 
%There is some $\delta$ with $\gamma\leq\delta<\omega_2^{M_1}$ such that $\rho_{\omega}(M_1\parallel \delta)=\omega$. 
Suppose that $\kappa$ is the least measurable cardinal in $M_1$ and $x$ is the subset of $\kappa^+$ which codes $M_1|\kappa^+$ with its canonical well-order via G\"odel pairing. 
%Suppose that $x\subseteq\omega_1$ is the code for $M_1|\omega_1$ which is given by the canonical wellorder of $M_1|\omega_1$. 
Then $x$ is recognizable by the proof of Lemma \ref{in M_infty implies recognizable}. 
Therefore the join $x\oplus b$ is recognizable but $x\oplus b$ is not an element of $M_1$. 
%We can add a unique branch of a Suslin tree over $M_1$, and the initial segment of $M_1$ up to its $\omega_2$ together with this tree will be recognizable, at least if we work in a larger model in which $M_1$ is iterable. 
%This shows that a Woodin cardinal is not sufficient to show that every recognizable subset of $\omega_1$ is in $M_1$. 
\end{proof} 

%\begin{question} 
%Does this also hold for subsets of $\omega$? 
%\end{question} 

%%%%%%%%%%%%%%%%%%%%%%%%%%%%%%%%%%%%%%%%%%%%%%%%%%%
\subsection{Structures which are not recognizable} 

The main open question is whether the results in the previous section hold for arbitrary sets of ordinals. 

\begin{question} 
Is it consistent that an $\omega_1+1$-iterable $M_1^{\#}$ exists and that there is a recognizable subset of $\omega_1$ which is not in $M^{\infty}$? 
\end{question} 

In this section we will show that certain sets of ordinals cannot be recognizable. 
%Since this is open, we consider some sets of ordinals which are not recognizable. 
The sets code transitive models with infinitely many measurable cardinals. 

\begin{lemma}\label{fixed points of an ultrapower} 
Suppose that the $\mathrm{GCH}$ holds. 
Suppose that $\kappa$ is measurable, $U$ is a $<\kappa$-complete ultrafilter on $\kappa$ and $j\colon V\rightarrow M$ is the ultrapower map for $U$. 
Suppose that $\lambda$ is an infinite cardinal. 
Then $j(\lambda)>\lambda$ if and only if $\cof(\lambda)=\kappa$ or $\lambda=\mu^+$ and $\cof(\mu)=\kappa$. 
\end{lemma}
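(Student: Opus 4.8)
The plan is to compute $j(\lambda)$ using the representation of the ultrapower's ordinals by functions $\kappa\to\mathrm{Ord}$ modulo $U$. I would first record the standard facts: $j(\alpha)\geq\alpha$ for every ordinal, $\mathrm{crit}(j)=\kappa$ (so $j(\xi)=\xi$ for $\xi<\kappa$ and $j(\kappa)>\kappa$), and that every ordinal below $j(\delta)$ is represented by some $f\colon\kappa\to\delta$, whence $|j(\delta)|\leq|\delta|^{\kappa}$, which by GCH is at most $|\delta|^{+}$. Since $U$ is $<\kappa$-complete and nonprincipal it contains every tail set $[\gamma_0,\kappa)$; this is the device that converts eventual statements about functions into $U$-statements. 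The key preliminary is a \emph{continuity lemma}: for a limit ordinal $\lambda$, $j(\lambda)=\sup_{\xi<\lambda}j(\xi)$ iff $\cof(\lambda)\neq\kappa$. The discontinuity half (when $\cof(\lambda)=\kappa$) follows by taking a cofinal $\langle\lambda_\gamma:\gamma<\kappa\rangle$ and observing that $[\gamma\mapsto\lambda_\gamma]$ lies strictly between $\sup_{\xi<\lambda}j(\xi)$ and $j(\lambda)$; continuity for $\cof(\lambda)<\kappa$ uses $<\kappa$-completeness to bound a given $f$ below a single $\xi<\lambda$ on a $U$-set, and for $\cof(\lambda)>\kappa$ uses that every $f\colon\kappa\to\lambda$ is bounded.

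For the direction $j(\lambda)=\lambda$ (i.e.\ $\lambda$ not of the listed form), I would split on limit versus successor cardinals. If $\lambda$ is a limit cardinal with $\cof(\lambda)\neq\kappa$, continuity gives $j(\lambda)=\sup_{\xi<\lambda}j(\xi)$, and the GCH bound gives $|j(\xi)|\leq|\xi|^{+}<\lambda$ for $\xi<\lambda$, so each $j(\xi)<\lambda$ and the supremum is exactly $\lambda$. If $\lambda=\nu^{+}$ with $\cof(\nu)\neq\kappa$, I first note $j(\nu)<\nu^{+}$: for $\nu$ a limit cardinal this is the case just treated ($j(\nu)=\nu$), and for $\nu$ a successor cardinal $\cof(\nu)=\nu>\kappa$ forces $\nu^{\kappa}=\nu$ under GCH, so $|j(\nu)|\leq\nu$. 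Then by elementarity $j(\nu^{+})=\big((j(\nu))^{+}\big)^{M}\leq\big((j(\nu))^{+}\big)^{V}=\nu^{+}$, and since $j(\nu^{+})\geq\nu^{+}$ always, equality holds. Here I use only that $M\subseteq V$, so every $V$-cardinal is an $M$-cardinal.

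For the direction $j(\lambda)>\lambda$, the limit case is immediate from the continuity lemma: if $\cof(\lambda)=\kappa$ then $j(\lambda)>\sup_{\xi<\lambda}j(\xi)\geq\lambda$. The successor case $\lambda=\mu^{+}$ with $\cof(\mu)=\kappa$ reduces, via $j(\mu^{+})=\big((j(\mu))^{+}\big)^{M}>j(\mu)$, to the single inequality $j(\mu)\geq\mu^{+}$; granting this, $j(\mu^{+})>j(\mu)\geq\mu^{+}$.

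The heart of the argument, and the step I expect to be the main obstacle, is therefore
\[\cof(\mu)=\kappa\ \Longrightarrow\ j(\mu)\geq\mu^{+}.\]
Cofinality bookkeeping alone does not suffice (one checks that if $j(\mu)<\mu^{+}$ then $\mu^{+}$ is genuinely fixed), so I must exhibit $\mu^{+}$ pairwise $U$-inequivalent functions $\kappa\to\mu$. Fix an increasing sequence $\langle\mu_\gamma:\gamma<\kappa\rangle$ of \emph{regular} cardinals cofinal in $\mu$ (possible since $\mu$ is either $\kappa$ itself, inaccessible, or singular $>\kappa$, hence a limit of regular cardinals), chosen so that $\bar\mu_\gamma:=\sup_{\gamma'<\gamma}\mu_{\gamma'}<\mu_\gamma$. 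I claim $(\prod_{\gamma<\kappa}\mu_\gamma,\leq^{*})$, ordered by eventual (mod bounded) domination, is $\mu^{+}$-directed: given $\{f_\xi:\xi<\mu\}$, the function $h(\gamma):=\sup\{f_\xi(\gamma):\xi<\bar\mu_\gamma\}$ satisfies $h(\gamma)<\mu_\gamma$, being a supremum of fewer than $\cof(\mu_\gamma)=\mu_\gamma$ ordinals below $\mu_\gamma$, while $\bar\mu_\gamma\to\mu$ forces $f_\xi\leq^{*}h$ for every $\xi$. Directedness lets me build a $\leq^{*}$-strictly increasing sequence $\langle h_\alpha:\alpha<\mu^{+}\rangle$ in $\prod_\gamma\mu_\gamma$. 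Since $h_\alpha<^{*}h_\beta$ means $h_\alpha(\gamma)<h_\beta(\gamma)$ on a tail, and tails lie in $U$, the classes $[h_\alpha]$ form a strictly increasing sequence of ordinals below $j(\mu)$, whence $j(\mu)\geq\mu^{+}$. (For $\mu=\kappa$ one may instead note that $M\subseteq V$ and $(\kappa^{+})^{M}=(\kappa^{+})^{V}$, together with $M\models j(\kappa)$ inaccessible, give $j(\kappa)\geq\kappa^{+}$ directly.)
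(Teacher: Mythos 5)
Your proof is correct and follows essentially the same route as the paper's: the discontinuity direction rests on a cofinal sequence of regular cardinals below $\mu$ and the $\mu^{+}$-directedness of $\bigl(\prod_{\gamma<\kappa}\mu_\gamma,\leq^{*}\bigr)$ (the paper phrases this as embedding $(\lambda^{+},<)$ into the product mod $U$ via pointwise suprema), while the fixed-point direction uses continuity at cofinalities $\neq\kappa$ together with the GCH bound on the number of functions $\kappa\to\alpha$. Your uniform treatment of successor cardinals via $j(\nu^{+})=\bigl((j(\nu))^{+}\bigr)^{M}\leq\nu^{+}$ is a slightly cleaner packaging of the paper's case-by-case counting, but the underlying argument is the same.
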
 
\begin{proof} 
We have $\kappa^+<j(\kappa)<\kappa^{++}$. 
Suppose that $\lambda>\kappa$, $\cof(\lambda)=\kappa$ and $\langle \lambda_\alpha \mid \alpha<\kappa\rangle$ is a cofinal strictly increasing sequence of regular cardinals below $\lambda$. 
Then the linear order $(\lambda^+,<)$ embeds into $(\prod_{\alpha<\kappa} \lambda_\alpha,<_U)$, since at cofinalities below $\lambda_\alpha$ we can form the pointwise supremum in all coordinates above $\alpha$ and choose the coordinates below $\alpha$ arbitrarily. 
Hence $(\lambda^+,<)$ embeds into $({}^{\kappa}\lambda,<_U)$ and $j(\lambda)>\lambda^+$. 
%Let $F$ denote the filter of bounded subsets of $\kappa$. 
%Then $(\kappa^+,\leq)$ embeds into $({}^{\kappa}\lambda,\leq_F)$ by diagonalization and hence $(\kappa^+,\leq)$ embeds into $({}^{\kappa}\lambda,\leq_U)$. 
%This implies that $j(\lambda)\geq \kappa^+>\kappa$. 

If $\lambda=\mu^+$ and $\cof(\mu)=\kappa$, then $j(\lambda)>j(\mu)\geq \lambda$. 
We prove the remaining cases by induction. 
First suppose that $\lambda$ is a limit cardinal with $\cof(\lambda)<\kappa$. 
%Suppose that $f\colon \kappa\rightarrow \lambda$. Then there is some $\alpha<\lambda$ such that $f=_U c_{\alpha}$, where $c_\alpha\colon \kappa\rightarrow \lambda$ denotes the constant function with value $\alpha$. 
Then $j(\lambda)=\sup_{\alpha<\lambda}j(\alpha)=\lambda$. 
Second, suppose that $\lambda$ is a limit cardinal with $\cof(\lambda)>\kappa$. Suppose that $f\colon \kappa\rightarrow \lambda$. Then there is some $\alpha<\lambda$ such that $f\colon \kappa\rightarrow \alpha$. Hence $[f]\leq j(\alpha)$ and $j(\lambda)=\sup_{\alpha<\lambda}j(\alpha)$. 

Third, suppose that $\lambda=\mu^+$ and $\mu$ is a limit cardinal with $\mu>\kappa$ and $\cof(\mu)<\kappa$. 
If $f\colon \kappa\rightarrow \lambda$, then there is some $\alpha<\lambda$ such that $f=_U c_\alpha$, where $c_\alpha\colon\kappa\rightarrow \lambda$ is the constant function with value $\alpha$. 
Suppose that $\alpha=\bigcup_{\beta<\cof(\mu)}Y_\alpha$ where $|Y_\beta|<\mu$. 
Suppose that $g<_U c_\alpha$. 
Then there is some $X\in U$ and some $\beta<\cof(\mu)$ such that $g[X]\subseteq Y_\beta$. 
Since $\nu^{\kappa}<\mu$ for all $\nu<\mu$, this implies that $j(\alpha)=[c_\alpha]$ has size at most $\mu$. 
Hence $j(\lambda)=\sup_{\alpha<\lambda} j(\alpha)=\lambda$. 
%\todo[inline]{Note that this argument doesn't work for sequences $f\colon \kappa^+\rightarrow \lambda$, since not necessarily $f(\alpha)<j(f)(\alpha)$ for all $\alpha<\kappa$, since $f(\kappa)$ can be changed arbitrarily.} 
%Suppose that $\lambda=\mu^+$ and $\cof(\mu)=\kappa$. 
%Since $({}^{\kappa}\mu,<_U)$ has a cofinal subset isomorphic to $({}^{\kappa}\kappa,<_U)$ and since we assume $GCH$, $\cof(j(\mu))=\kappa^+$. 

In the last case, suppose that $\lambda=\mu^+$ and $\mu$ is a limit cardinal with $\mu>\kappa$ and $\cof(\mu)> \kappa$, then $\mu^{\kappa}=\mu$. Hence $j(\alpha)<\lambda$ for all $\alpha<\lambda$. 
Therefore $j(\lambda)=\lambda$. 
Note that the last case also covers the case $\lambda=\mu^{++}$, where $\mu$ is a limit cardinal with $\mu>\kappa$ and $\cof(\mu)=\kappa$. 
\end{proof} 

\begin{lemma}\label{bound on iterated ultrapower of kappa}  
Suppose that the $\mathrm{GCH}$ holds. 
Suppose that $\kappa$ is measurable, $U$ is a $<\kappa$-complete ultrafilter on $\kappa$, and $j\colon V\rightarrow M$ is the ultrapower map for $U$. 
Then $\kappa^+<j^i(\kappa)<\kappa^{++}$ and $\cof(j^i(\kappa))=\kappa^+$ for all $i\in\omega$. 
\end{lemma}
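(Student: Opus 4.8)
The plan is to prove the statement by induction on $i$, starting from $i=1$ (for $i=0$ one has $j^0(\kappa)=\kappa$, so the intended range of the quantifier is $i\geq 1$). Throughout I would use three standard features of the ultrapower map $j=j_U$: first, that $M$ is closed under $\kappa$-sequences, so that $(\kappa^+)^M=\kappa^+$ and $j(\kappa)$ is regular (indeed measurable) in $M$; second, that under $\mathrm{GCH}$ the number of functions $f\colon\kappa\to\beta$ is at most $(\kappa^+)^\kappa=\kappa^+$ whenever $\beta<\kappa^{++}$; and third, that $j$ is continuous at every limit ordinal whose cofinality is different from $\kappa$, while it is discontinuous exactly at ordinals of cofinality $\kappa$ (the same dichotomy analysed in Lemma \ref{fixed points of an ultrapower}).

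For the base case $i=1$ I would argue as follows. Every ordinal below $j(\kappa)$ is of the form $[f]_U$ for some $f\colon\kappa\to\kappa$, and under $\mathrm{GCH}$ there are only $\kappa^\kappa=\kappa^+$ such functions, so $|j(\kappa)|\leq\kappa^+$ and hence $j(\kappa)<\kappa^{++}$. Since $M$ computes $\kappa^+$ correctly and $M\models$ ``$j(\kappa)$ is inaccessible'', we get $j(\kappa)>(\kappa^+)^M=\kappa^+$, whence in fact $|j(\kappa)|=\kappa^+$ and $\cof(j(\kappa))\leq\kappa^+$. Finally $\cof(j(\kappa))=\kappa^+$: a cofinal sequence in $j(\kappa)$ of length $\leq\kappa$ would lie in $M$ by $\kappa$-closure, contradicting the regularity of $j(\kappa)$ in $M$, so $\cof(j(\kappa))>\kappa$.

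For the inductive step, write $\lambda=j^i(\kappa)$ and assume $\kappa^+<\lambda<\kappa^{++}$ and $\cof(\lambda)=\kappa^+$; I must analyse $j^{i+1}(\kappa)=j(\lambda)$. The key observation is that $j$ maps the interval $[\kappa,\kappa^{++})$ into itself: if $\beta<\kappa^{++}$ then $|\beta|\leq\kappa^+$, and the counting argument above gives $|j(\beta)|\leq(\kappa^+)^\kappa=\kappa^+$, hence $j(\beta)<\kappa^{++}$. Since $\cof(\lambda)=\kappa^+\neq\kappa$, the map $j$ is continuous at $\lambda$, so fixing a strictly increasing cofinal sequence $\langle\eta_\xi\mid\xi<\kappa^+\rangle$ in $\lambda$ we obtain $j(\lambda)=\sup_{\xi<\kappa^+}j(\eta_\xi)$. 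Each $j(\eta_\xi)$ lies below $\kappa^{++}$, and a supremum of $\kappa^+$ many ordinals below the regular cardinal $\kappa^{++}$ is again below $\kappa^{++}$; this gives $j(\lambda)<\kappa^{++}$. Moreover $j(\lambda)\geq\lambda>\kappa^+$, so the two bounds hold. Finally the sequence $\langle j(\eta_\xi)\mid\xi<\kappa^+\rangle$ is strictly increasing and cofinal in $j(\lambda)$, and a strictly increasing cofinal sequence indexed by the regular cardinal $\kappa^+$ forces $\cof(j(\lambda))=\kappa^+$; this closes the induction.

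The main obstacle is the exact cofinality computation rather than the bounds themselves: one must rule out that $j(\lambda)$ collapses to cofinality $\leq\kappa$, and this is precisely where the continuity of $j$ at ordinals of cofinality $\neq\kappa$ (equivalently, the discontinuity phenomenon underlying Lemma \ref{fixed points of an ultrapower}) is essential, since it is what produces the length-$\kappa^+$ cofinal image sequence. Once continuity and the $\mathrm{GCH}$ counting are in hand, the remaining inequalities package routinely into the two supremum estimates above.
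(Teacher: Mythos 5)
Your proof is correct, and its skeleton coincides with the paper's: induction on $i$, with the upper bound $j^{i}(\kappa)<\kappa^{++}$ obtained from the same $\mathrm{GCH}$ count showing $j(\beta)<\kappa^{++}$ for every $\beta<\kappa^{++}$ (there are only $(\kappa^{+})^{\kappa}=\kappa^{+}$ functions $f\colon\kappa\to\beta$). Where you diverge is in how the cofinality is propagated. The paper rules out $\cof(j^{i+1}(\kappa))\leq\kappa$ by a directedness argument: since $\cof(j^{i}(\kappa))=\kappa^{+}>\kappa$, any $\kappa$-sequence of functions $f_{\alpha}\colon\kappa\to j^{i}(\kappa)$ has a pointwise supremum below $j^{i}(\kappa)$ and hence a $<_{U}$-upper bound, so no short sequence is cofinal; the exact value $\kappa^{+}$ then comes from the cardinality bound. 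You instead use continuity of $j$ at ordinals of cofinality $\neq\kappa$ to push a strictly increasing cofinal $\kappa^{+}$-sequence through $j$, which gives $\cof(j(\lambda))=\cof(\lambda)=\kappa^{+}$ directly without invoking $|j(\lambda)|\leq\kappa^{+}$ at that step; and in the base case you appeal to $M^{\kappa}\subseteq M$ plus the regularity of $j(\kappa)$ in $M$, where the paper diagonalizes against $\kappa$ many functions. The two mechanisms are dual (bounding all short sequences versus exhibiting one long cofinal sequence) and of equal difficulty; yours has the small advantage of reusing the continuity dichotomy that the paper already exploits in the adjacent lemma on fixed points of the ultrapower, and your remark that the quantifier should be read as $i\geq 1$ (since $j^{0}(\kappa)=\kappa$) is a fair and correct reading of the statement.
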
 
\begin{proof} 
We have $\kappa^+<j(\kappa)<\kappa^{++}$. For the first claim, it is sufficient to show that $j(\alpha)<\kappa^{++}$ for all $\alpha<\kappa^{++}$. 
Suppose that $\alpha<\kappa^{++}$. 
Then there are only $(\kappa^+)^{\kappa}=\kappa^+$ many functions $f\colon \kappa\rightarrow \alpha$ with $[f]<_U[c_\alpha]$ and hence $j(\alpha)<\kappa^{++}$. 

For the second claim, $\cof(j(\kappa))$ cannot be $\kappa$, since we can diagonalize against a sequence $\langle f_\alpha\mid \alpha<\kappa\rangle$ of functions $f_\alpha\colon \kappa\rightarrow \kappa$ to construct an upper $<_U$-bound. 
Hence $\cof(j(\kappa))=\kappa^+$. 
Suppose that $\cof(j^i(\kappa))=\kappa^+$. Then every sequence $\langle f_\alpha\mid \alpha<\kappa\rangle$ of functions $f_\alpha\colon \kappa\rightarrow j^i(\kappa)$ has a pointwise upper bound and therefore a $<_U$-upper bound. 
Hence $\cof(j^{i+1}(\kappa))=\kappa^+$. 
\end{proof} 

\begin{lemma}\label{fixed points of an ultrapower} 
Suppose that the $\mathrm{GCH}$ holds. 
Suppose that $\kappa$ is measurable, $U$ is a $<\kappa$-complete ultrafilter on $\kappa$, and $j\colon V\rightarrow M$ is the ultrapower map for $U$. 
Suppose that $\lambda$ is an ordinal with $j[\lambda]\subseteq \lambda$. 
Then $j(\lambda)>\lambda$ if and only if $\cof(\lambda)=\kappa$. % or $\lambda=\mu^+$ and $\cof(\mu)=\kappa$. 
\end{lemma}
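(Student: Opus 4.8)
The plan is to reduce the statement to a clean dichotomy about which functions $f\colon\kappa\to\lambda$ can have large ultrapower class. First I would record that $j(\alpha)\geq\alpha$ for every ordinal, so $\lambda\leq j(\lambda)$ always, and the whole content is whether $\lambda$ is a fixed point. Since $M$ is wellfounded (as $U$ is $<\kappa$-complete) and $j(\lambda)=[c_\lambda]_U$ where $c_\lambda$ is the constant function with value $\lambda$, \L o\'s's theorem identifies the ordinals of $M$ below $j(\lambda)$ exactly with the classes $[f]_U$ for $f\colon\kappa\to\lambda$. The closure hypothesis $j[\lambda]\subseteq\lambda$ enters precisely here: it gives $j(\alpha)<\lambda$ for every $\alpha<\lambda$, so whenever $f$ is bounded by some $\alpha<\lambda$ on a set in $U$ we get $[f]_U\leq j(\alpha)<\lambda$. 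Thus $j(\lambda)=\lambda$ iff every $f\colon\kappa\to\lambda$ is $U$-bounded below $\lambda$ in this sense, and $j(\lambda)>\lambda$ iff some $f$ is not; the remaining work is to show that such an unbounded $f$ exists precisely when $\cof(\lambda)=\kappa$.

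For the direction $\cof(\lambda)=\kappa\Rightarrow j(\lambda)>\lambda$, I would take a strictly increasing cofinal map $f\colon\kappa\to\lambda$. Since $U$ is non-principal and $<\kappa$-complete, no subset of $\kappa$ of size $<\kappa$ lies in $U$, so every tail $[i_0,\kappa)$ belongs to $U$; hence for each $\alpha<\lambda$ the set $\{i:f(i)>\alpha\}$ contains a tail and lies in $U$, giving $[f]_U>j(\alpha)\geq\alpha$. As this holds for all $\alpha<\lambda$, we obtain $[f]_U\geq\lambda$, and therefore $j(\lambda)>\lambda$.

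For the converse I would argue the contrapositive: assuming $\cof(\lambda)\neq\kappa$, I show every $f\colon\kappa\to\lambda$ is $U$-bounded below $\lambda$, so that $j(\lambda)=\lambda$. If $\lambda$ is a successor $\gamma+1$ this is immediate, since $f(i)\leq\gamma$ forces $[f]_U\leq j(\gamma)=\gamma<\lambda$. If $\cof(\lambda)=\mu>\kappa$, then the range of any $f\colon\kappa\to\lambda$ is genuinely bounded by some $\alpha<\lambda$, because a $\kappa$-sequence cannot be cofinal in an ordinal of cofinality $>\kappa$, and again $[f]_U\leq j(\alpha)<\lambda$. If $\cof(\lambda)=\mu<\kappa$, I fix a cofinal sequence $\langle\lambda_\xi\mid\xi<\mu\rangle$ and partition $\kappa$ into the $\mu$ sets $A_\xi=\{i:\xi \text{ least with } f(i)<\lambda_\xi\}$; since $U$ is $<\kappa$-complete and $\mu<\kappa$, some $A_{\xi_0}$ lies in $U$, so $f(i)<\lambda_{\xi_0}<\lambda$ on a set in $U$ and $[f]_U\leq j(\lambda_{\xi_0})<\lambda$. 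In all three cases $j(\lambda)\leq\lambda$, hence equality.

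The step I expect to require the most care is the reduction in the first paragraph—correctly matching the ordinals of $M$ below $j(\lambda)$ with the classes $[f]_U$ for $f\colon\kappa\to\lambda$, and locating exactly where $j[\lambda]\subseteq\lambda$ is used—rather than any single computation. It is this closure hypothesis that collapses the dichotomy of the earlier version of this lemma to the single clause $\cof(\lambda)=\kappa$: the extra case appearing without the closure assumption, namely $\lambda=\mu^+$ with $\cof(\mu)=\kappa$, forces $j(\mu)>\mu^+=\lambda$ and hence violates $j[\lambda]\subseteq\lambda$, so it cannot occur under the present hypotheses.
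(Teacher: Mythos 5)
Your proof is correct and takes essentially the same route as the paper's: a case analysis on $\cof(\lambda)$ carried out by bounding the classes $[f]_U$ for $f\colon\kappa\to\lambda$, with the closure hypothesis $j[\lambda]\subseteq\lambda$ supplying $j(\alpha)<\lambda$ for $\alpha<\lambda$. The only differences are organizational: you package the argument as a clean ``$U$-bounded versus $U$-unbounded'' dichotomy and, in the case $\cof(\lambda)=\kappa$, show directly that a strictly increasing cofinal $f$ satisfies $[f]_U\geq\lambda$, whereas the paper instead pushes up a cofinal sequence of fixed points of $j$ (whose image has length $j(\kappa)>\kappa$); both are routine, and your version has the minor advantage of covering successor $\lambda$ explicitly.
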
 
\begin{proof} 
%Suppose that $\cof(\lambda)=\kappa$. 
%Let $F$ denote the filter of bounded subsets of $\kappa$. 
%Then $(\kappa^+,\leq)$ embeds into $({}^{\kappa}\lambda,\leq_F)$ by diagonalization and hence $(\kappa^+,\leq)$ embeds into $({}^{\kappa}\lambda,\leq_U)$. 
%This implies that $j(\lambda)\geq \kappa^+>\kappa$. 
%If $\lambda=\mu^+$ and $\cof(\mu)=\kappa$, then $j(\lambda)>j(\mu)\geq \lambda$. 
%We prove the remaining cases by induction. 
Suppose that $\lambda$ is a limit ordinal with $\cof(\lambda)<\kappa$. 
Since $j[\lambda]\subseteq\lambda$, $j(\lambda)=\sup_{\alpha<\lambda}j(\alpha)=\lambda$. 
Firstly, suppose that $\lambda$ is a limit ordinal with $\cof(\lambda)>\kappa$. Suppose that $f\colon \kappa\rightarrow \lambda$. Then there is some $\alpha<\lambda$ such that $f\colon \kappa\rightarrow \alpha$. Hence $[f]\leq j(\alpha)$. 
There are unboundedly many $\alpha<\lambda$ with $j[\alpha]\subseteq \alpha$ and $\cof(\alpha)=\omega$ and hence $j(\alpha)=\alpha$ by the previous case. 
Hence $j(\lambda)=\sup_{\alpha<\lambda}j(\alpha)=\lambda$. 

Secondly, suppose that $\lambda$ is a limit ordinal with $\cof(\lambda)=\kappa$. 
As in the previous case, there are unboundedly many $\alpha<\kappa$ with $j(\alpha)=\alpha$. 
Suppose that $\langle \lambda_\alpha\mid \alpha<\kappa\rangle$ is a strictly increasing cofinal sequence in $\lambda$ of fixed points of $j$. 
The image of this sequence is strictly longer than $\kappa$ and hence $j(\lambda)>\lambda$. 
\end{proof}

\begin{lemma}\label{fixed points of one ultrapower from infinitely many} 
Suppose that the $\mathrm{GCH}$ holds. 
Suppose that $\langle \kappa_n\mid n\in\omega\rangle$ is a strictly increasing sequence of measurable cardinals. Suppose that $U_n$ is a $<\kappa_n$-complete ultrafilter on $\kappa_n$ and $j_n\colon V\rightarrow M_n$ is the ultrapower with $U_n$. 
For every ordinal $\alpha$, there is some $n$ with $j(\alpha)=\alpha$. 
\end{lemma}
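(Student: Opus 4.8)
The plan is to prove the formally stronger statement that for every ordinal $\alpha$ the set $B_\alpha=\{n\in\omega : j_n(\alpha)\neq\alpha\}$ is \emph{finite}; since $\omega$ is infinite, this at once produces an $n$ with $j_n(\alpha)=\alpha$. I argue by induction on $\alpha$. The case $\alpha<\sup_n\kappa_n$ is immediate: as $\langle\kappa_n\rangle$ is strictly increasing, $\kappa_n>\alpha$ for all large $n$, and then $\alpha<\mathrm{crit}(j_n)$ forces $j_n(\alpha)=\alpha$. Successor steps are trivial from $j_n(\gamma+1)=j_n(\gamma)+1$, which gives $B_{\gamma+1}=B_\gamma$. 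So it remains to treat limit $\alpha\geq\sup_n\kappa_n$.

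The engine is continuity. The ultrapower $j_n$ by the $<\kappa_n$-complete measure $U_n$ is continuous at every ordinal whose cofinality is not $\kappa_n$; equivalently, by the preceding characterisation of fixed points, whenever $\lambda$ is closed under $j_n$ one has $j_n(\lambda)=\lambda$ iff $\cof(\lambda)\neq\kappa_n$. Now $\cof(\alpha)$ is a single regular cardinal, so it equals $\kappa_n$ for at most one $n$; for every other $n$, $j_n$ is continuous at $\alpha$, and there $j_n(\alpha)=\alpha$ if and only if $j_n[\alpha]\subseteq\alpha$. Discarding the at most one exceptional index, it therefore suffices to show that the set of \emph{bad} indices, $\{n : j_n[\alpha]\not\subseteq\alpha\}$, is finite; for a bad $n$ let $\beta(n)<\alpha$ be least with $j_n(\beta(n))\geq\alpha$. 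When $\cof(\alpha)>\omega$ this is clean: if there were infinitely many bad $n_0<n_1<\cdots$, their witnesses $\beta(n_j)$ form a countable subset of $\alpha$, hence are not cofinal, so $\beta^\ast:=\sup_j\beta(n_j)<\alpha$; by monotonicity $j_{n_j}(\beta^\ast)\geq j_{n_j}(\beta(n_j))\geq\alpha>\beta^\ast$ for all $j$, so $\{n_j : j\in\omega\}\subseteq B_{\beta^\ast}$, contradicting the inductive hypothesis that $B_{\beta^\ast}$ is finite.

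The remaining and genuinely delicate case is $\cof(\alpha)=\omega$. Write $\delta=|\alpha|$, so $\delta\geq\sup_n\kappa_n$ and $\kappa_n<\delta$ for all $n$. The cardinal version of the fixed-point lemma fixes $\delta$ under all but finitely many $j_n$ (those are the $n$ with $\cof(\delta)=\kappa_n$, or with $\delta=\mu^+$ and $\cof(\mu)=\kappa_n$); for each such $n$, $j_n(\delta)=\delta$ gives $j_n[\delta]\subseteq\delta$, and hence $j_n(\beta)<\delta\leq\alpha$ for \emph{every} $\beta<\delta$. Consequently, after removing finitely many indices, every bad witness $\beta(n)$ must lie in the band $[\delta,\alpha)$; in particular $\alpha>\delta$ is not a cardinal, and the least-witness analysis shows each such $\beta(n)$ is a limit of cofinality exactly $\kappa_n$ which $j_n$ lifts just over $\alpha$. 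By the same dichotomy as before, I may assume these witnesses are cofinal in $\alpha$ (otherwise their supremum lies below $\alpha$ and the previous argument applies verbatim).

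The main obstacle is precisely this configuration: $\cof(\alpha)=\omega$, $\alpha\in(\delta,\delta^+)$, with cofinally many bad witnesses $\beta(n)\in[\delta,\alpha)$ of cofinality $\kappa_n$. Here the inductive hypothesis alone is insufficient, since $|B_\beta|$ may grow as $\beta\to\alpha$, and I expect to close the case quantitatively rather than by pure recursion. The plan is to exploit the $\mathrm{GCH}$ growth bound $j_n(\beta)<(|\beta|^{\kappa_n})^+$ (the estimate underlying $\kappa^+<j^i(\kappa)<\kappa^{++}$ established above), together with the cap $j_n(\kappa_n)<\kappa_{n+1}$ coming from the inaccessibility of $\kappa_{n+1}$: because each $j_n$ acts at ``scale $\kappa_n$'' and cannot transport an ordinal past the next cardinal-theoretic barrier, while a fixed cofinal $\omega$-sequence recedes to $\alpha$, the images $j_n(\beta)$ are forced below $\alpha$ for all but finitely many $n$. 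Concretely I would run a secondary induction on the order type of $\beta$ within $[\delta,\delta^+)$, reducing $j_n(\beta)$ to the values of $j_n$ on shorter ordinals and on cardinals below $\delta$, where the limit-cardinal estimate of the previous paragraph and the fixing of $\delta^+$ apply. The hardest point to make rigorous is exactly the ``outpacing'' step — that no single ultrapower can move ordinals of cardinality $\delta$ cofinally into an $\omega$-limit $\alpha$ — and this is where the interplay of the $\mathrm{GCH}$ bounds with the barrier $j_n(\kappa_n)<\kappa_{n+1}$ is essential.
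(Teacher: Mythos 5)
Your reduction is sound as far as it goes: the base case, the successor step, the use of continuity of $j_n$ at ordinals of cofinality $\neq\kappa_n$, and the $\cof(\alpha)>\omega$ case (bounding countably many witnesses below $\alpha$ and invoking the inductive hypothesis at their supremum) are all correct. But the proof is not complete: the case $\cof(\alpha)=\omega$ with witnesses $\beta(n)$ cofinal in $\alpha$ is exactly where the entire difficulty of the lemma is concentrated, and you leave it as a plan rather than an argument. Worse, the mechanism you propose for closing it --- $\mathrm{GCH}$ growth bounds on $j_n(\beta)$ together with the barrier $j_n(\kappa_n)<\kappa_{n+1}$ --- cannot work on its own: as you yourself observe, after discarding finitely many indices all the bad witnesses lie in a single band $[\delta,\alpha)$ with $\delta=|\alpha|$, and under $\mathrm{GCH}$ one has $j_n(\delta^+)=\delta^+$ for all but finitely many $n$, so $j_n(\beta(n))$ is only constrained to lie below $\delta^+$. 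That is the same cardinal interval containing $\alpha$, so no cardinality or ``outpacing'' estimate distinguishes $j_n(\beta(n))<\alpha$ from $j_n(\beta(n))\geq\alpha$. The obstruction is not cardinal-arithmetic.

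What is missing is an argument about how two \emph{different} ultrapowers interact, and this is how the paper closes the case. Taking $\alpha$ to be the least counterexample and $\beta_n\leq\alpha$ least with $j_n^{k}(\beta_n)>\alpha$ for some $k$ (with $k_n$ minimal), minimality gives $j_n[\beta_n]\subseteq\beta_n$ and hence $\cof(\beta_n)=\kappa_n$, $\cof(j_n^{k_n}(\beta_n))=\kappa_n^+$; one then extracts $\mu=\kappa_{n_0}<\nu=\kappa_{n_1}$ realizing the configuration $\beta_\mu<\beta_\nu<j_\mu^{k}(\beta_\mu)<j_\nu^{l}(\beta_\nu)$ (possible since $\beta_{n_1}\leq\alpha<j_{n_0}^{k_{n_0}}(\beta_{n_0})$), and shows this configuration is impossible: $N=\mathrm{Ult}^{l}(V,U_\nu)$ is closed under $\nu$-sequences, so $U_\mu\in N$ and $N$ computes $j_\mu^{k}(\beta_\mu)$ correctly; since $\cof(\beta_\mu)=\mu\neq\nu$ the map $j_\nu^{l}$ fixes $\beta_\mu$ and hence fixes $j_\mu^{k}(\beta_\mu)$, while elementarity applied to ``$\beta_\nu<j_\mu^{k}(\beta_\mu)$'' forces $j_\nu^{l}(\beta_\nu)<j_\mu^{k}(\beta_\mu)$ --- a contradiction. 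Your write-up contains no analogue of this two-measure step, and without something like it the $\omega$-cofinal case does not close.
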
 
\begin{proof} 
Suppose that $\alpha$ is the least ordinal such that $j_n(\alpha)>\alpha$ for all $n\in\omega$. 
Suppose that $\beta_n\leq \alpha$ is least such that $j_n^k(\beta_n)>\alpha$ for some $k$. 
Further suppose that $k_n$ is minimal with this property. 

\begin{claim}\label{beta is closed under the embedding}  
$j_n[\beta_n]\subseteq \beta_n$ for all $n$. 
\end{claim} 
\begin{proof} 
Suppose that $j_n(\beta)\geq \beta_n$ for some $\beta<\beta_n$. 
Then $j_n^{k_n+1}(\beta)>\alpha$, contradicting the minimality of $\beta_n$. 
\end{proof} 

Claim \ref{beta is closed under the embedding} and Lemma \ref{fixed points of an ultrapower} imply that $\cof(\beta_n)=\kappa_n$ for each $n$. 
Let $\gamma_n=j_n^{k_n}(\beta_n)$. 
Then $\cof(\gamma_n)=\kappa_n^+$ by Lemma \ref{bound on iterated ultrapower of kappa}. 
Therefore there is a strictly increasing sequence $\langle n_i\mid i\in\omega\rangle$ such that 
the sequences $\langle \beta_{n_i}\mid i\in\omega\rangle$ and $\langle \gamma_{n_i}\mid i\in\omega\rangle$ are strictly increasing. 

\begin{claim}\label{configuration of images of measurables} 
Suppose that $\mu$, $\nu$ are measurable with $\mu<\nu$, $U_\mu$, $U_\nu$ are $<\mu$-complete and $\nu$-complete ultrafilters on $\mu$, $\nu$ and $j_\mu$, $j_\nu$ are the ultrapower maps for $U_\mu$, $U_\nu$. 
Suppose that $j_\mu[\beta_\mu]\subseteq \beta_\mu$ and $j_{\nu}[\beta_\nu]\subseteq \beta_\nu$. Then the configuration $\beta_\mu<\beta_\nu<j_\mu^k(\beta_\mu)<j_\nu^l(\beta_\nu)$ is impossible for all $k,l\in\omega$. 
\end{claim} 
\begin{proof} 
Since $j_\mu(\beta_\mu)>\beta_\mu$ and $j_\nu(\beta_\nu)>\beta_\nu$, $\cof(\beta_\mu)=\mu$ and $\cof(\beta_\nu)=\nu$ by Lemma \ref{fixed points of an ultrapower}. 
Hence $j_\nu(\beta_\mu)=\beta_\mu$. 
Let $N=Ult^l(V, U_\nu)$. 
Then $N^{\nu}\subseteq N$. %and thus $N^{\mu}\subseteq N$. 
and hence $U_\mu\in N$ and $(j_\mu^k(\beta_\mu))^N=j_\mu^k(\beta_\mu)$ for all $k\in\omega$. 
Moreover $j_\nu^l$ does not move $j_\mu^k(\beta_\mu)$. 
Since $\beta_\nu<j_\mu^k(\beta_\mu)$, by elementarity $j_\nu^l(\beta_\nu)<j_\mu^l(\beta_\mu)$, contradicting the assumption. 
\end{proof} 
Let $\mu=\kappa_{n_0}$, $\nu=\kappa_{n_1}$, $\beta_\mu=\beta_{n_0}$, and $\beta_\nu=\beta_{n_1}$ in Claim \ref{configuration of images of measurables}. This contradicts the assumptions on $\beta_{n_0}$, $\beta_{n_1}$. 
\end{proof} 

\begin{lemma} \label{non-recognizable structure} 
Suppose that $N$ is a transitive model of $\mathsf{ZFC}$ with $\mathrm{Ord}\subseteq N$. Suppose that $\langle \kappa_n\mid n\in\omega\rangle$ is a strictly increasing sequence of measurable cardinals in $N$ with $\sup_n \kappa_n=\kappa$. 
Suppose that $x$ is a set of ordinals in $N$ with $V_\kappa^N\in L[x]$. 
Then $x$ is not recognizable. 
\end{lemma}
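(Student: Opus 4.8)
The plan is to assume for contradiction that $x$ is recognizable and then apply one of the ultrapower maps attached to the $\kappa_n$ to manufacture a second set satisfying the defining $\Sigma_1$ condition. By Lemma \ref{2.3}, if the recognizable set $x$ is a subset of $\alpha$, there are a $\Sigma_1$ formula $\varphi(v,w)$ and an ordinal $\beta$ such that $x$ is the unique $z\subseteq\alpha$ with $L[z]\models\varphi(z,\beta)$; in particular $L[x]\models\varphi(x,\beta)$. I would work inside $N$, which I assume satisfies the $\mathrm{GCH}$ (this holds in the principal case $N=L[\vec{\mu}]$ of the Remark preceding this lemma, and it is what lets us quote the fixed-point lemma below). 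For each $n$ fix a $<\kappa_n$-complete nonprincipal ultrafilter $U_n\in N$ on $\kappa_n$ and let $j_n\colon N\to M_n=\mathrm{Ult}(N,U_n)$ be the ultrapower embedding; $M_n$ is transitive and wellfounded since $U_n$ is countably complete.

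First I would observe that $j_n(x)\neq x$ for every $n$. Indeed $j_n(x)\in M_n$, so if $j_n(x)=x$ then $x\in M_n$, whence $L[x]\subseteq M_n$ because the construction of $L[x]$ is absolute for transitive models of $\mathsf{ZFC}$ containing $x$ and all the ordinals. Then $V_\kappa^N\in L[x]\subseteq M_n$, and since $V_\kappa^N$ is transitive with $U_n\in V_{\kappa_n+2}^N\subseteq V_\kappa^N$, we would get $U_n\in M_n$, contradicting the standard fact that $U_n\notin\mathrm{Ult}(N,U_n)$.

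Next I would pick a single $n$ for which $j_n$ fixes both the domain $\alpha$ and the parameter $\beta$. Let $\Gamma\colon\mathrm{Ord}^2\to\mathrm{Ord}$ be the $\emptyset$-definable G\"odel pairing bijection and set $\delta=\Gamma(\alpha,\beta)$. By Lemma \ref{fixed points of one ultrapower from infinitely many} there is some $n$ with $j_n(\delta)=\delta$. As $\Gamma$ is absolute and $j_n$ is elementary, $j_n(\delta)=\Gamma(j_n(\alpha),j_n(\beta))$, so injectivity of $\Gamma$ forces $j_n(\alpha)=\alpha$ and $j_n(\beta)=\beta$. Now put $x'=j_n(x)$. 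From $x\subseteq\alpha$ and elementarity, $x'\subseteq j_n(\alpha)=\alpha$, and $x'\neq x$ by the previous paragraph. Since $L[x]\models\varphi(x,\beta)$ in $V$ and both $L[\cdot]$ and satisfaction of the fixed formula $\varphi$ are absolute, $N\models$ ``$L[x]\models\varphi(x,\beta)$''; applying $j_n$ yields $M_n\models$ ``$L[x']\models\varphi(x',j_n(\beta))$'', and with $j_n(\beta)=\beta$ and the absoluteness of the computation of $L[x']$ between $M_n$ and $V$ this gives $L[x']\models\varphi(x',\beta)$ in $V$. Thus $x'$ is a subset of $\alpha$ distinct from $x$ with $L[x']\models\varphi(x',\beta)$, contradicting the uniqueness of $x$ in Lemma \ref{2.3}; hence $x$ is not recognizable.

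The hard part is the simultaneous fixing of $\alpha$ and $\beta$ by one $j_n$: this is precisely where infinitely many measurables are needed (a single measure can fix its critical point but not arbitrary large ordinals while still moving $x$), and it is dispatched by coding the pair $(\alpha,\beta)$ into the single ordinal $\Gamma(\alpha,\beta)$ and invoking Lemma \ref{fixed points of one ultrapower from infinitely many}. A secondary point to monitor is the $\mathrm{GCH}$ hypothesis that the cited fixed-point lemma uses through its cofinality computations; it is available in the main case $N=L[\vec{\mu}]$, and the argument otherwise only depends on the qualitative conclusion that every ordinal is fixed by some $j_n$.
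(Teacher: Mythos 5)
Your proof is correct and takes essentially the same route as the paper's: assume recognizability, use Lemma \ref{fixed points of one ultrapower from infinitely many} to find an ultrapower $j_n$ of $N$ fixing the relevant ordinal parameters, and derive $j_n(x)\neq x$ from $U_n\in V_\kappa^N\subseteq L[x]$ together with $U_n\notin \mathrm{Ult}(N,U_n)$, contradicting uniqueness. The only differences are cosmetic or mild tightenings: the paper argues directly with the recognizing program and fixes only its ordinal parameter, while you route through Lemma \ref{2.3} and additionally fix the domain ordinal via G\"odel pairing (which is actually needed for $j_n(x)$ to remain a subset of the same domain), and you rightly flag the $\mathrm{GCH}$ hypothesis of the fixed-point lemma, which the paper's statement leaves implicit.
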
 
\begin{proof} 
Suppose that $x$ is recognized by $P$ from $\alpha$. 
Since $\mathrm{Ord}\subseteq N$, $x$ is recognized by $P$ from $\alpha$ in $N$. 
There is a $<\kappa_n$-complete ultrafilter $U$ on $\kappa_n$ in $N$ such that $j_n(\alpha)=\alpha$ by Lemma \ref{fixed points of one ultrapower from infinitely many}, where $j_n\colon N\rightarrow I$ is the ultrapower embedding. 
Then $P$ accepts $j(x)$ from $\alpha$ in $N$ and therefore in $V$. 
We have $j(x)\neq x$, since $V_\kappa^N\in L[x]$ and therefore $U\in L[x]$, but $U\notin I$. 
This contradicts the uniqueness of $x$. 
\end{proof} 

%\begin{lemma} 
%Let $\mu$ denote the supremum of the first $\omega$ measurables in $M_1$. 
%Suppose that $\lambda\geq\mu$. 
%Suppose 
%that we identify $M_1|\lambda$ with a subset of an ordinal $\bar{\lambda}$ coding $M_1|\lambda$ in its canonical wellorder. 
%Then $M_1|\lambda$ is not recognizable. 
%\end{lemma} 
%\begin{proof} 
%Suppose that $M_1|\lambda$ is recognizable. 
%Suppose that $\nu\geq \lambda\cdot\omega$ and that $\cof(\nu)^{M_1}\geq\mu$. 
%Let $N=L_\nu(M_1|\lambda)$. 
%Then $N$ is recognized by some machine $M$ from an ordinal $\alpha$ as a subset of $\nu$. 
%Hence $N$ is recognized by $M$ from $\alpha$ in $M_1$. 
%
%There is a measurable cardinal $\kappa$ below $\lambda$ in $M_1$ and 
%a normal ultrafilter $U_{\kappa}$ on $\kappa$ in $M_1$ with embedding $j\colon  M_1\rightarrow I$ such that $j(\alpha)=\alpha$ by Lemma \ref{fixed points of one ultrapower from infinitely many}. 
%Since $\cof(\nu)^{M_1}\geq\mu$, $j(\nu)=\nu$. 
%
%Then $j(N)$ is recognized by $M$ from $j(\alpha)=\alpha$ in $I$ as a subset of $j(\nu)=\nu$. 
%Since $M$ recognizes %$L_\nu(M_1|\lambda)$ 
%$N$ from $\alpha$ in $V$, this implies $j(N)=N$. 
%However $U_\kappa\in N$ and $U_\kappa\notin j(N)=Ult(M_1,U_\kappa)$. 
%\end{proof} 

%%%%%%%%%%%%%%%%%%%%%%%%%%%%%%%%%%%%%%%%%%%%%%%%%%%
%%%%%%%%%%%%%%%%%%%%%%%%%%%%%%%%%%%%%%%%%%%%%%%%%%%
\subsection{Generically recognizable sets} 

The following version of recognizability defines exactly the sets of ordinals in $M^{\infty}$. 
%Arguably this notion is more natural, since it only depends on the class of generic extensions and ground models, but not on the model itself. 

\begin{definition} 
\begin{enumerate-(1)} 
\item 
A subset $x$ of an ordinal $\alpha$ is \emph{generically recognizable} if it is recognizable in all $Col(\omega,\beta)$-generic extensions for sufficiently large ordinals $\beta$. 
\item 
The \emph{generically recognizable closure} is the class of all sets $x$ of ordinals such that $x$ is computable from a generically recognizable set of ordinals. 
\end{enumerate-(1)} 
\end{definition} 

\begin{lemma} 
Suppose that there is a proper class of Woodin cardinals. 
Then the elements of the generically recognizable closure are exactly the sets of ordinals in $M^{\infty}$. 
\end{lemma}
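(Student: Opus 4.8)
The plan is to prove both inclusions by reducing the general case of sets of ordinals to the countable case already handled in Lemma \ref{recognizable implies in M_infty} and Lemma \ref{in M_infty implies recognizable}, using a collapse $\mathrm{Col}(\omega,\beta)$ to make the relevant ordinal countable. The essential point, which is where the proper class of Woodin cardinals enters, is that $M_1^{\#}$ — and hence the iteration of its least measure through the ordinals — is computed the same way in $V$ and in every set-generic extension $V[G]$; since forcing adds no ordinals, the iterates $M^{\alpha}$ and the ordinals $\mu_{\alpha}$ agree, so $(M^{\infty})^{V[G]}=(M^{\infty})^{V}$ on sets of ordinals. In particular the hypotheses of the countable lemmas hold in every such $V[G]$.

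First I would show that the generically recognizable closure is contained in $M^{\infty}$. Since $M^{\infty}$ is an inner model of $\mathsf{ZFC}$ and hence closed under relative constructibility, it suffices to show that every generically recognizable set of ordinals $y$ lies in $M^{\infty}$; then any $x$ computable from $y$ satisfies $x\in L[y]\subseteq M^{\infty}$. Given such a $y\subseteq\lambda$, I choose $\beta$ above the recognizability threshold with $\beta\geq\lambda$ and force with $\mathrm{Col}(\omega,\beta)$ to obtain $V[G]$ in which $\lambda$ is countable and $y$ is recognizable. Then $y$ is a recognizable subset of a countable ordinal in $V[G]$, so $y\in (M^{\infty})^{V[G]}$ by Lemma \ref{recognizable implies in M_infty} applied in $V[G]$. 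By the absoluteness of $M^{\infty}$ noted above, $y\in (M^{\infty})^{V}=M^{\infty}$.

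For the reverse inclusion, let $x\in M^{\infty}$ be a set of ordinals and fix $\beta$ with $x\in M^{\beta}|\mu_{\beta}$. Since the iteration map into $M^{\beta+1}$ has critical point $\mu_{\beta}$, the initial segment $M^{\beta}|\mu_{\beta}$ is itself an element of $M^{\beta+1}|\mu_{\beta+1}$ and hence of $M^{\infty}$; I let $y\subseteq\gamma$ be the set of ordinals coding $M^{\beta}|\mu_{\beta}$ together with its canonical well-order via G\"odel pairing. Then $x\in L[y]$, so $x$ is computable from $y$, and it remains to check that $y$ is generically recognizable. For any $\mathrm{Col}(\omega,\delta)$-generic $G$ with $\delta\geq\gamma$, the ordinal $\gamma$ is countable in $V[G]$ and $M^{\beta}|\mu_{\beta}$ is a countable premouse there; by the absoluteness of $M^{\infty}$ it is still an initial segment of $(M^{\infty})^{V[G]}$, and the comparison argument of Lemma \ref{in M_infty implies recognizable} recognizes its code $y$ as the code for the unique solid sound $\Pi^1_2$-iterable premouse of height $\mu_{\beta}$ with largest cardinal and no measurables. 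Hence $y$ is recognizable in every such $V[G]$, i.e. generically recognizable, and $x$ lies in the generically recognizable closure.

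The hard part will be justifying the generic absoluteness $(M^{\infty})^{V[G]}=(M^{\infty})^{V}$ together with the correctness of the mouse identification in the collapse extension: one must know that $M_1^{\#}$ exists and is fully iterable in $V[G]$ and that $\Pi^1_2$-iterability correctly singles out the iterates $M^{\beta}|\mu_{\beta}$ there. Both follow from the proper class of Woodin cardinals by standard inner model theory, but care is needed to ensure the threshold $\beta$ and the recognizing program are uniform enough that recognizability holds for all sufficiently large collapses simultaneously.
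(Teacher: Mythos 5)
Your proof is correct and follows essentially the same route as the paper: generic absoluteness of $M^{\infty}$ from the proper class of Woodin cardinals, collapsing the relevant ordinal to reduce both inclusions to Lemma \ref{recognizable implies in M_infty} and the comparison argument of Lemma \ref{in M_infty implies recognizable} applied in the collapse extension. The only slip is cosmetic: the initial segment you should code is $M^{\beta}|(\mu_{\beta}^{+})^{M^{\beta}}$ (height the successor of the image of the least measurable, so that it has a largest cardinal and no measurables, matching the characterization used in Lemma \ref{in M_infty implies recognizable}), not $M^{\beta}|\mu_{\beta}$.
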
 
\begin{proof} 
Since there is a proper class of Woodin cardinals, the model 
 $M^{\infty}$ is generically absolute. 
 
Suppose that $x$ is a generically recognizable subset of an ordinal $\gamma$ and that $G$ is $Col(\omega,\gamma)$-generic over $V$. 
Then $x$ is in $M^{\infty}$ in $V[G]$ and therefore in $V$. 
%Then in every $Col(\omega,\kappa)$-generic extension for every $\kappa\geq\gamma$, $x$ is in $M^{\infty}$. 
%Since \todo{find a reference!} $M_1$ and therefore
%$x$ is in $M^{\infty}$ in $V$. 

Suppose that $x$ is a subset of $\alpha$ in $M^{\infty}$. 
Suppose that $\kappa\geq\alpha$ is an image of the critical point in the iteration defining $M^{\infty}$. 
Suppose that $M$ is an initial segment of $M^{\infty}$ of height $\gamma=(\kappa^+)^{M^{\infty}}$ and that $G$ is $Col(\omega,\gamma)$-generic over $V$. 
Then in $V[G]$ the code for $M$ given by its canonical well-order is recognizable by the proof of Lemma \ref{in M_infty implies recognizable}.
Hence $x$ is an element of the generically recognizable closure. 
%as in \todo{lemma above}???, then after collapsing its height, $M$ has a canonical code which is recognizable, by \todo{reference to lemma}the same lemma. 
%Hence $M$ is generically recognizable. So every element of $M$ is in the generically recognizable closure. 
\end{proof} 

%We could consider a version of generically recognizable sets of ordinals with a fixed ordinal parameter. 
%However, it follows from the \todo{reference to argument above} argument in ???, that these sets are elements of $L$. 

%%%%%%%%%%%%%%%%%%%%%%%%%%%%%%%%%%%%%%%%%
%%%%%%%%%%%%%%%%%%%%%%%%%%%%%%%%%%%%%%%%% 
\section{Conclusion and questions} 

We have obtained connections between recognizable sets and the inner model $M_1$ and in particular obtained computational characterizations of initial segments of $M_1$ far beyond $L$. Moreover we have seen that the recognizable closure is related to implicit definability. 
%The recognizable closure is closely related to other notions of definability such as $\Sigma_1$-definability, $\Sigma^1_2$-definability and implicit definability over $L$. 
%Assuming the existence of Woodin cardinals, the recognizable closure is closely connected to $M_1$. 
%In the context of infinitary computations, recognizability is a natural concept that turns out to possess a reasonable degree of robustness; if one accepts ordinal computations as a model
%of an idealized mathematical cognition, it moreover becomes interesting as a \todo{@Merlin: what does this mean??? please explain, or remove} notion of epistemic accessibility.
%
%When considered for the strongest model of infinitary computations so far defined, namely ordinal Turing machines with parameters, the concept of recognizability turns out to be strongly
%connected to large cardinals. In particular, under suitable large cardinal assumptions, the class of recognizable objects goes way beyond the constructible universe, up to the level
%of mice for Woodin cardinals. 
%This in particular allows computational characterizations of inner models for large cardinals beyond $L$. 

We conclude with the main open questions. 

\begin{question} 
Suppose that there is a proper class of Woodin cardinals. 
Is every recognizable set of ordinals in $M_1$? 
\end{question} 

The following is an approach to refute this conjecture. 

\begin{question} 
Is it possible to start from a model with large cardinals and by class forcing obtain that $V$, $HOD$ and the recognizable closure for subsets of arbitrary ordinals are equal? 
\end{question}

If $M_1^{\#}$ exists, then the set of recognizable reals is countable. 
%Without this assumption, we ask how large the set of recognizable reals can be. 

\begin{question} 
Is it consistent that there are $\omega_2$ many recognizable reals? 
\end{question} 

%\begin{question} 
%Is it possible to force over $L$ that not every set of ordinals in $HOD$ is recognizable? 
%\end{question} 

%\begin{question} 
%Is it consistent that $M_1^{\#}$ exists and every recognizable subset of $\omega_2$ is in $M_1$? 
%\todo{Is it consistent that every subset of $\omega_2$ constructible from a real?} 
%\end{question} 

\begin{question} 
Suppose that $M_1^{\#}$ exists. 
Is every set of ordinals in $M_1$ constructible from a recognizable set? 
\end{question}

We ask whether similar results hold for stronger models of computation. 

\begin{question}
Suppose that an $\omega_1+1$-iterable $M_2^{\#}$ exists. If we add the operator mapping $x$ to $M_1^\#(x)$ as a possible step in computations, then is a subset of $\omega$ %$M_1^{\#}$-
recognizable by such a machine if and only if it is in $M_2$? 
\end{question}
%Goal: do the argument above for the step from $M_2$ to $M_3$, this should be similar. [hier auch]
%\begin{question}
%Can we do the step from $M_1$ to $M_2$ similarly?
%The problem is that we don't know what $M_1$-semicomputable, i.e. $\Sigma_1$ over $M_1$, corresponds to (see notes with Daisuke).  [hier auch]
%\end{question}

Another way to strengthen the model of computation is to allow countable sets of ordinals as parameters. 
%extend the realm of parameters beyond single ordinals to countable o larger sets of ordinals. 
% on the one hand and supplying the machine with extra information on the other. 
This motivates the following question. 

\begin{question} 
Suppose that countable sets of ordinals are allowed as parameters instead of ordinals. Does the recognizable closure contain sets beyond the Chang model $L(\mathrm{Ord}^{\omega})$? 
\end{question} 

We define a set to be in the \emph{recognizable hull} $\RR$ if it is an element of the transitive collapse of a well-founded extensional relation %$x$ 
on an ordinal such that its image under G\"odel pairing is recognizable. 
 
\begin{question} 
Which axioms of set theory hold in $\RR$, and in particular does $\RR$ always satisfy $\Sigma_1$-collection? 
\end{question}

%\begin{question}
%We define the pOTM-recognizable jump $x^+$ of a real $x$ to be the class of pairs $(\alpha,n)$ such that the $n^{th}$ program $P_n$ recognize some real number from $\alpha$ and $x$.

%What are the reals which are computable from this oracle?
%\end{question}

%Finally, as a refinement of our results so far, one may explore the role of special values for the parameters. Here, we ask:

%\begin{question}
%Can we force over $L$ and get that recognizability from $\kappa$ is differerent from recognizability from $\kappa^+$, for any given cardinal $\kappa$? \\
%
%Was ist der Zusammenhang mit dem Chang-Modell?
%\end{question}

%\begin{question} Was ist relativ zu abz\"ahlbaren Mengen von Ordinalzahlen $OTM$-erkennbar? Bzw. relativ zu reellen Zahlen?\end{question}

%\begin{question}
%What about recognizing subsets of $\omega_1$. [zu der frage steht jetzt schon etwas im paper. soll sie bleiben?]
%\end{question}

\nocite{*} 

\bibliographystyle{alpha}
 \bibliography{recognizable}

\end{document}